\newcommand{\A}	{\includegraphics{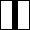}}
\newcommand{\Aou}{\includegraphics{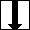}}
\newcommand{\Auo}{\includegraphics{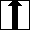}}
\newcommand{\B}	{\includegraphics{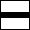}}
\newcommand{\Blr}	{\includegraphics{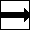}}
\newcommand{\Brl}	{\includegraphics{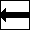}}
\newcommand{\C}	{\includegraphics{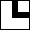}}
\newcommand{\Cor}	{\includegraphics{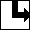}}
\newcommand{\Cro}	{\includegraphics{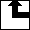}}
\newcommand{\D}	{\includegraphics{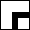}}
\newcommand{\Dru}	{\includegraphics{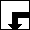}}
\newcommand{\Dur}	{\includegraphics{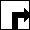}}
\newcommand{\E}	{\includegraphics{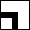}}
\newcommand{\Eul}	{\includegraphics{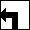}}
\newcommand{\Elu}	{\includegraphics{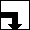}}
\newcommand{\F}	{\includegraphics{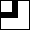}}
\newcommand{\Flo}	{\includegraphics{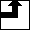}}
\newcommand{\Fol}	{\includegraphics{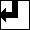}}
\newcommand{\CE}{\C\hspace{-0.5pt}\E}
\newcommand{\CEou}{\C\hspace{-0.5pt}\Elu}
\newcommand{\DF}{\D\hspace{-1pt}\F}
\newcommand{\p} {\mathrm{path}}
\newcommand{\ty} {\mathrm{type}}
\newcommand{\ebottom}{\operatorname{bottom}}
\newcommand{\eleft}{\operatorname{left}}
\newcommand{\eright}{\operatorname{right}}
\newcommand{\etop}{\operatorname{top}}
\theoremstyle{definition}
\newtheorem{definition}{Definition}
\newtheorem{remark}{Remark}
\newtheorem{assumptions}{Assumptions}
\newtheorem*{notation*}{Notation}
\theoremstyle{plain}
\newtheorem{theorem}{Theorem}
\newtheorem{lemma}{Lemma}
\newtheorem{proposition}{Proposition}
\newtheorem{conjecture}{Conjecture}
\newtheorem{corollary}{Corollary}
\definecolor{magenta}{rgb}{0.75,0,0.25}
\definecolor{violet}{rgb}{0.25,0,0.75}
\definecolor{darkgreen}{rgb}{0.0,0.5,0.0}
\newcommand{\fr}{\mathrm{fr}}
\title{Supermixed labyrinth fractals}
\author{Ligia L. Cristea \thanks{L.L. Cristea is supported by the Austrian Science Fund (FWF), 
stand-alone project P27050-N26, and by the Austrian Science Fund (FWF) project F5508-N26, which is part of the Special Research Program ``Quasi-Monte Carlo Methods: Theory and Applications''}  \\
 \tt{strublistea@gmail.com} 
\and Gunther Leobacher \thanks{
G. Leobacher is supported by the Austrian Science Fund (FWF) project F5508-N26, which is part of the Special Research Program ``Quasi-Monte Carlo Methods: Theory and Applications''
}\\  \tt{gunther.leobacher@uni-graz.at} \\\\Karl-Franzens-Universit\"at Graz\\ Institut f\"ur Mathematik und Wissenschaftliches Rechnen
\\Heinrichstrasse 36, 8010 Graz\\Austria\\}
\begin{document}

\maketitle

\textbf{Keywords:} 
fractal, dendrite, graph,  tree, Sierpi\'nski carpets, length of paths, arc length\\

\textbf{AMS Classification:}  28A80, 28A75, 51M25, 05C05, 05C38, 54D05, 54F50
\abstract{Labyrinth fractals are dendrites in the unit square. They were
introduced and studied in the last decade first in the self-similar case
\cite{laby_4x4,laby_oigemoan}, then in the mixed case
\cite{mixlaby,cristealeobacher_arcs}. Supermixed fractals constitute  a 
significant generalisation
of mixed labyrinth fractals: each step of the iterative construction is done
according to not just one labyrinth pattern, but possibly to several
different patterns. In this paper we introduce and study supermixed labyrinth fractals and the
corresponding prefractals, called supermixed labyrinth sets, with focus on
the aspects that were previously studied for the self-similar and mixed case:
topological properties and properties of the arcs between points in the
fractal. The facts and formul{\ae} found here extend results proven in the
above mentioned cases. One of the main results is a sufficient condition
for infinite length of arcs in mixed labyrinth fractals.}

\section{Introduction}

Labyrinth fractals are a special family of Sierpi\'nski carpets in the plane,
and are dendrites in the unit square. They were defined and studied in the last
decade, first in the self-similar case \cite{laby_4x4, laby_oigemoan}, then in
the more general case, as mixed labyrinth fractals \cite{mixlaby,
cristealeobacher_arcs}. The self-similar labyrinth fractals are generated by
one labyrinth pattern, while in the case of mixed labyrinth fractals a sequence
of patterns may be used, i.e., one labyrinth pattern for each step of the iterative
construction. In both cases, in order to study the arcs between points in the
fractals, one uses the paths in the graphs (more precisely, trees) associated
to the patterns and to the prefractals, called labyrinth sets of some level
$n\ge 1$, respectively. An important role is played by the path matrix
associated to a labyrinth pattern and, respectively, to a labyrinth set of
level $n$. 

The results on labyrinth fractals have  already found applications in physics,
in different contexts like, e.g., the study of planar nanostructures
\cite{GrachevPotapovGerman2013}, the fractal reconstruction of complicated
images, signals and radar backgrounds \cite{PotapovGermanGrachev2013}, and
recently in the construction of prototypes of ultra-wide band radar antennas
\cite{PotapovZhang2016}. Moreover, in very recent work
\cite{PotapovPotapovPotapov_dec2017} fractal labyrinths are used in combination
with genetic algorithms for the synthesis of big robust antenna arrays and
nano-antennas in 
telecommunication.  Let us remark here that supermixed labyrinth fractals, as
well as self-similar and mixed labyrinth fractals, are  so-called finitely
ramified carpets, in the terminology used  in physicists' work regarding the
modelling of porous structures  \cite{Tarafdar_modelporstructrepeatedSC2001}.
Moreover, physicists use so-called disordered fractals to study diffusion in disordered media \cite{AnhHoffmanSeegerTarafdar2005}, more precisely, they investigate the diffusion in fractals obtained by mixing different Sierpi\'nski carpet generators. This leads to the idea that supermixed labyrinth fractals are a class of such fractals that can be used as models for future research.
Here we also remark that random Koch curves are related, e.g., to arcs between
exits in supermixed labyrinth fractals, and are of interest to
theoretical physicists in the context of diffusion processes,  e.g.,
\cite{SeegerHoffmannEssex2009_randomKoch}. 
Since labyrinth fractals are dendrites, let us mention here that very recent research in materials engineering \cite{JanaGarcia_lithiumdendrite2017} shows that dendrite growth, a largely unsolved problem,  plays an essential role when dealing with high power and energy lithium-ion batteries. In the context of fractal dendrites there is also recent work \cite{tarafdar_multifractalNaCl2013} in crystal growth which lead us to the conclusion that certain families of our labyrinth fractals are suitable as models for such phenomena.

While the self-similar and mixed labyrinth fractals are appealing because of 
their elementary setup and their elegant algebraic treatment, they do not
always provide convincing models for applications. This is overcome by the
introduction of supermixed labyrinth fractals:
for those we have, at each step of the
iterative construction, not just one labyrinth pattern, but in general,
a finite collection of labyrinth patterns, according to which the construction
of the prefractal is done. Again, the graph of each of the resulting
prefractals is a
tree, and the supermixed labyrinth fractal is a dendrite, thus the paths
between given vertices,  and, respectively, the arcs that connect points in the
fractal are unique. Here, in order to obtain the length of paths in the
prefractals, the path matrix does not work in the same way as in the
mixed and self-similar case. Here we need to introduce  {\em counting
matrices}, specific to the supermixed case. In this more general case the
relations that hold when passing from one iteration to the next one are not
anymore ``encoded" by the powers of a matrix or by products of matrices, as in
the self-similar or mixed case, respectively. In the supermixed case the
formul{\ae} contain sums of products of path matrices of the patterns and
counting matrices associated to the iterations. 

Of course, one could also approach supermixed labyrinth fractals as  $V$-variable fractals, see, e.g., \cite{barnsley_superfractals,freiberghamblyhutchinson_Vvariable} , or in the context of graph directed constructions, see, e.g., \cite{MauldinWilliams_graphdirected_1988}, or that of graph directed Markov systems
\cite{MauldinUrbanski_bookGDMS_2003}, but here the idea was to remain in the same framework as in the  case of self-similar and mixed labyrinth fractals, the objects that we generalise here. 

Finally, we mention that there is very recent research on fractal dendrites \cite{Samuel_selfsimilar_dendrites}, where self-similar  dendrites are constructed by using polygonal systems in the plane, a method based on IFSs that is different from the construction method used for self-similar labyrinth fractals \cite{laby_4x4, laby_oigemoan}.

Let us now give a short outline of the paper. 
First, we recall notions about labyrinth fractals and introduce the concepts of 
supermixed labyrinth set and fractal in Section \ref{sec:notions}. We also
prove that supermixed labyrinth sets are labyrinth patterns.

In Section \ref{sec:topological} we prove that every supermixed labyrinth 
fractal is a dendrite.
Next we define the exits of supermixed labyrinth fractals and of squares of
a given level in the short Section \ref{sec:exits}.

Next in Section \ref{sec:path_matrices} we describe how paths in the
graphs of prefractals of supermixed labyrinth fractals are constructed
iteratively. We recall the definition of the path matrix of a labyrinth 
pattern and define counting matrices, a concept specific to supermixed
labyrinth fractals. The main result of this section is Theorem 
\ref{theo:basisformel}, which gives a recursive formula for the path matrices
of supermixed labyrinth sets of different levels.

Section \ref{sec:arcs_supermixed} is devoted to constructing, and exploring
properties of arcs in supermixed labyrinth fractals.

The concept of blocked labyrinth pattern is recalled in Section 
\ref{sec:blocked}.  We review existing results about 
arcs in self-similar and mixed labyrinth fractals constructed with
blocked patterns. Moreover, we recall properties of the path matrix of 
blocked labyrinth patterns.

Finally, we formulate and prove one of our main results in Section 
\ref{sec:arcs_mixed}: a sufficient condition for infinite length of any arc
between distinct points of a mixed labyrinth fractal is $\sum_{k=1}^\infty
\frac{1}{m_k}=\infty$, where $(m_k)_{k\ge 1}$ is the sequence of widths of
of the patterns that  define the fractal.
We also remark on difficulties in adapting the proof method to the supermixed
case.

\section{Patterns, labyrinth patterns, supermixed labyrinth sets and supermixed labyrinth fractals}\label{sec:notions}
In order to construct labyrinth fractals we use \emph{labyrinth patterns}.
Figures~\ref{fig:A1tildeA2}, and \ref{fig:W2} show labyrinth patterns and illustrate the first two steps of the 
construction described below.

Let $x,y,q\in [0,1]$ such that $Q=[x,x+q]\times [y,y+q]\subseteq [0,1]\times [0,1]$. 
Then for any point $(z_x,z_y)\in[0,1]\times [0,1]$ we define the function
\[
P_Q(z_x,z_y)=(q z_x+x,q z_y+y).
\]
Let $m\ge 1$. $S_{i,j,m}=\{(x,y)\mid \frac{i}{m}\le x \le \frac{i+1}{m} \mbox{ and } \frac{j}{m}\le y \le \frac{j+1}{m} \}$ and  
${\cal S}_m=\{S_{i,j,m}\mid 0\le i\le m-1 \mbox{ and } 0\le j\le m-1 \}$. 
\\
We call any nonempty ${\cal A} \subseteq {\cal S}_m$ an $m$-\emph{pattern} and $m$ its \emph{width}.\\  

Let $\{\widetilde{\cal A}_k\}_{k=1}^{\infty}$, with $\widetilde{\mathcal A}_k=\{ {\mathcal A}_{k,h},  \mbox{ for }h=1,\dots,s_k \}$, where $s_k\ge 1$ (the number of patterns of the collection $\widetilde{\mathcal A}_k$), for all $k\ge 1,$
be a sequence of nonempty collections of non-empty patterns and $\{m_k\}_{k=1}^{\infty}$ be the corresponding 
\emph{width-sequence}, i.e., for any $k\ge 1$ we have 
${\cal A}\subseteq {\cal S}_{m_k}$, for all ${\cal A}\in \widetilde{\mathcal A}_{k}$.
Throughout this paper we assume $s_1=1.$
\begin{figure}[hhhh]
\begin{center}
\begin{tikzpicture}[scale=.35]
\draw[line width=2pt] (0,0) rectangle (10,10);
\draw[line width=2pt] (2.5, 0) -- (2.5,10);
\draw[line width=2pt] (5, 0) -- (5,10);
\draw[line width=2pt] (7.5, 0) -- (7.5,10);
\draw[line width=2pt] (0, 2.5) -- (10,2.5);
\draw[line width=2pt] (0, 5) -- (10,5);
\draw[line width=2pt] (0, 7.5) -- (10,7.5);
\filldraw[fill=black, draw=black] (2.5,0) rectangle (5, 2.5);
\filldraw[fill=black, draw=black] (5,0) rectangle (7.5, 2.5);
\filldraw[fill=black, draw=black] (7.5,0) rectangle (10, 2.5);
\filldraw[fill=black, draw=black] (5,2.5) rectangle (7.5, 5);
\filldraw[fill=black, draw=black] (0,5) rectangle (2.5, 7.5);
\filldraw[fill=black, draw=black] (5,7.5) rectangle (7.5, 10);
\filldraw[fill=black, draw=black] (7.5,7.5) rectangle (10, 10);
\draw[line width=2pt] (11,0) rectangle (21,10);
\draw[line width=2pt] (13.5, 0) -- (13.5,10);
\draw[line width=2pt] (16, 0) -- (16,10);
\draw[line width=2pt] (18.5, 0) -- (18.5,10);
\draw[line width=2pt] (11, 2.5) -- (21,2.5);
\draw[line width=2pt] (11, 5) -- (21,5);
\draw[line width=2pt] (11, 7.5) -- (21,7.5);
\filldraw[fill=black, draw=black] (11,0) rectangle (16, 5);
\filldraw[fill=black, draw=black] (11,7.5) rectangle (13.5, 10);
\filldraw[fill=black, draw=black] (16,5) rectangle (18.5, 7.5);
\filldraw[fill=black, draw=black] (18.5,0) rectangle (21, 2.5);
\draw[line width=2pt] (22,0) rectangle (32,10);
\draw[line width=2pt] (24.5, 0) -- (24.5,10);
\draw[line width=2pt] (27,0) -- (27,10);
\draw[line width=2pt] (29.5, 0) -- (29.5,10);
\draw[line width=2pt] (22, 2.5) -- (32,2.5);
\draw[line width=2pt] (22, 5) -- (32,5);
\draw[line width=2pt] (22, 7.5) -- (32,7.5);
\filldraw[fill=black, draw=black] (22,0) rectangle (24.5, 5);
\filldraw[fill=black, draw=black] (22,7.5) rectangle (24.5, 10);
\filldraw[fill=black, draw=black] (24.5,0) rectangle (27, 2.5);
\filldraw[fill=black, draw=black] (27,5) rectangle (29.5, 7.5);
\filldraw[fill=black, draw=black] (29.5,7.5) rectangle (32, 10);
\filldraw[fill=black, draw=black] (29.5,0) rectangle (32, 2.5);
\end{tikzpicture}
\caption{Three labyrinth patterns (all of width $4$), from left to right: the unique pattern ${\cal A}_{1,1} \in \widetilde {\cal A}_1$, followed by the (two) patterns ${\cal A}_{2,1}, {\cal A}_{2,2} \in \widetilde{\cal A}_2$} \label{fig:A1tildeA2}
\end{center}
\end{figure}
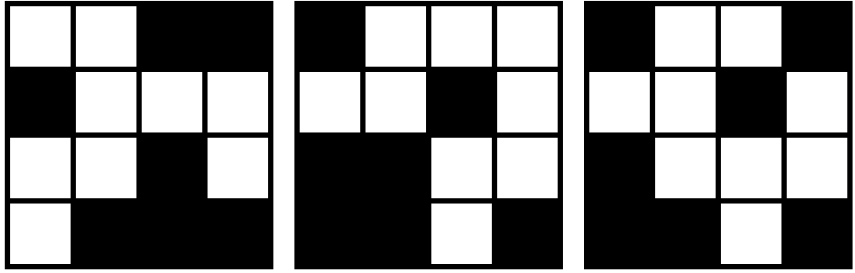

\begin{figure}[hhhh]\label{W2}
\begin{center}
\begin{tikzpicture}[scale=.3]
\draw[line width=1pt] (0,0) rectangle (16,16);
\draw[line width=0.8pt] (4, 0) -- (4,16);
\draw[line width=0.8pt] (8, 0) -- (8,16);
\draw[line width=0.8pt] (12, 0) -- (12,16);
\draw[line width=0.8pt] (0, 4) -- (16, 4);
\draw[line width=0.8pt] (0, 8) -- (16, 8);
\draw[line width=0.8pt] (0, 12) -- (16,12);
\draw[line width=0.5pt] (1, 0) -- (1,16);
\draw[line width=0.5pt] (2, 0) -- (2,16);
\draw[line width=0.5pt] (3, 0) -- (3,16);
\draw[line width=0.5pt] (5, 0) -- (5,16);
\draw[line width=0.5pt] (6, 0) -- (6,16);
\draw[line width=0.5pt] (7, 0) -- (7,16);
\draw[line width=0.5pt] (9, 0) -- (9,16);
\draw[line width=0.5pt] (10, 0) -- (10,16);
\draw[line width=0.5pt] (11, 0) -- (11,16);
\draw[line width=0.5pt] (13, 0) -- (13,16);
\draw[line width=0.5pt] (14, 0) -- (14,16);
\draw[line width=0.5pt] (15, 0) -- (15,16);
\draw[line width=0.5pt] (0, 1) -- (16,1);
\draw[line width=0.5pt] (0, 2) -- (16,2);
\draw[line width=0.5pt] (0, 3) -- (16,3);
\draw[line width=0.5pt] (0, 5) -- (16,5);
\draw[line width=0.5pt] (0, 6) -- (16,6);
\draw[line width=0.5pt] (0, 7) -- (16,7);
\draw[line width=0.5pt] (0, 9) -- (16,9);
\draw[line width=0.5pt] (0, 10) -- (16,10);
\draw[line width=0.5pt] (0, 11) -- (16,11);
\draw[line width=0.5pt] (0, 13) -- (16,13);
\draw[line width=0.5pt] (0, 14) -- (16,14);
\draw[line width=0.5pt] (0, 15) -- (16,15);
\filldraw[fill=black, draw=black] (0,8) rectangle (4, 12);
\filldraw[fill=black, draw=black] (4,0) rectangle (16, 4);
\filldraw[fill=black, draw=black] (8,4) rectangle (12, 8);
\filldraw[fill=black, draw=black] (8,12) rectangle (16, 16);
\filldraw[fill=black, draw=black] (0,0) rectangle (2, 2);
\filldraw[fill=black, draw=black] (0,3) rectangle (1, 4);
\filldraw[fill=black, draw=black] (2,2) rectangle (3, 3);
\filldraw[fill=black, draw=black] (3,0) rectangle (4, 1);
\filldraw[fill=black, draw=black] (0,4) rectangle (2, 6);
\filldraw[fill=black, draw=black] (0,7) rectangle (1, 8);
\filldraw[fill=black, draw=black] (2,6) rectangle (3, 7);
\filldraw[fill=black, draw=black] (3,4) rectangle (4, 5);
\filldraw[fill=black, draw=black] (4,8) rectangle (6, 10);
\filldraw[fill=black, draw=black] (4,11) rectangle (5, 12);
\filldraw[fill=black, draw=black] (6,10) rectangle (7, 11);
\filldraw[fill=black, draw=black] (7,8) rectangle (8, 9);
\filldraw[fill=black, draw=black] (4,12) rectangle (6, 14);
\filldraw[fill=black, draw=black] (4,15) rectangle (5, 16);
\filldraw[fill=black, draw=black] (6,14) rectangle (7, 15);
\filldraw[fill=black, draw=black] (7,12) rectangle (8, 13);
\filldraw[fill=black, draw=black] (12,4) rectangle (14, 6);
\filldraw[fill=black, draw=black] (12,7) rectangle (13, 8);
\filldraw[fill=black, draw=black] (14,6) rectangle (15, 7);
\filldraw[fill=black, draw=black] (15,4) rectangle (16, 5);
\filldraw[fill=black, draw=black] (0,12) rectangle (1, 14);
\filldraw[fill=black, draw=black] (0,15) rectangle (1, 16);
\filldraw[fill=black, draw=black] (1,12) rectangle (2, 13);
\filldraw[fill=black, draw=black] (2,14) rectangle (3, 15);
\filldraw[fill=black, draw=black] (3,12) rectangle (4, 13);
\filldraw[fill=black, draw=black] (3,15) rectangle (4, 16);
\filldraw[fill=black, draw=black] (4,4) rectangle (5, 6);
\filldraw[fill=black, draw=black] (4,7) rectangle (5,
 8);
\filldraw[fill=black, draw=black] (5,4) rectangle (6, 5);
\filldraw[fill=black, draw=black] (6,6) rectangle (7, 7);
\filldraw[fill=black, draw=black] (7,4) rectangle (8, 5);
\filldraw[fill=black, draw=black] (7,7) rectangle (8, 8);
\filldraw[fill=black, draw=black] (8,8) rectangle (9, 10);
\filldraw[fill=black, draw=black] (8,11) rectangle (9, 12);
\filldraw[fill=black, draw=black] (9,8) rectangle (10, 9);
\filldraw[fill=black, draw=black] (10,10) rectangle (11, 11);
\filldraw[fill=black, draw=black] (11,8) rectangle (12, 9);
\filldraw[fill=black, draw=black] (11,11) rectangle (12, 12);
\filldraw[fill=black, draw=black] (12,8) rectangle (13, 10);
\filldraw[fill=black, draw=black] (12,11) rectangle (13, 12);
\filldraw[fill=black, draw=black] (13, 8) rectangle (14, 9);
\filldraw[fill=black, draw=black] (14,10) rectangle (15, 11);
\filldraw[fill=black, draw=black] (15, 8) rectangle (16, 9);
\filldraw[fill=black, draw=black] (15,11) rectangle (16, 12);
\end{tikzpicture}
\caption{The set ${\cal W}_2$, constructed with the help of  the above patterns from $\widetilde{\cal A}_1$ and $\widetilde{\cal A}_2$ (with $m_1=m_2 = 4$), that
can also be viewed as a $16$-pattern} 
\label{fig:W2}
\end{center}
\end{figure}
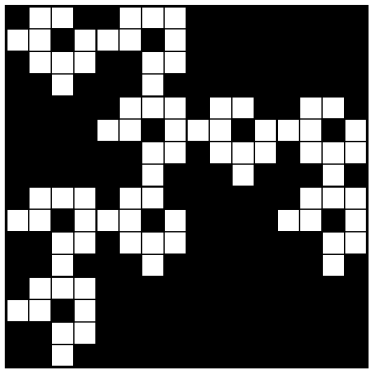

We denote $m(n)=\prod_{k=1}^n m_k$, for all $n \ge 1$. 
We let ${\cal W}_1:={\cal A}_{1,1}$, and call it the 
\emph{set of white squares of level $1$}. 
Then we define ${\cal B}_1={\cal S}_{m_1} \setminus {\cal W}_1$ 
as the \emph{set of black squares of level $1$}.
For $k\ge 1$ let $\phi_{k+1}:{\mathcal W}_k \rightarrow \widetilde{\mathcal A}_{k+1} $, i.e., $\phi_{k+1}$ assigns to every square $W\in {\mathcal W}_k $ a  pattern in $\widetilde{\mathcal A}_{k+1}$.
For $n\ge 2$ we define the \emph{set ${\mathcal W}_n$ of white squares of level $n$} as follows.

\begin{equation} \label{eq:W_n}
{\cal W}_n:=\bigcup_{W\in {\phi_n}(W_{n-1}), W_{n-1}\in {\cal W}_{n-1}}\{ P_{W_{n-1}}(W)\}, \mbox{ for  all }n \ge 2 .
\end{equation}

We note that ${\cal W}_n\subset {\cal S}_{m(n)}$, and we define the \emph{set of black squares of level $n$} by ${\cal B}_n={\cal S}_{m(n)} \setminus {\cal W}_n$. For $n\ge 1$, we define $L_n=\bigcup_{W\in {\cal W}_n} W$. 
One can immediately see that  $\{L_n\}_{n=1}^{\infty}$ is a monotonically decreasing sequence of compact sets. 
We call $L_{\infty}:=\bigcap_{n=1}^{\infty}L_n$ the \emph{limit set defined by the sequence of collections of patterns 
$\{{\cal A}_k\}_{k=1}^{\infty}.$ }                                                                                                                                                                                                                                                                                                                                                                                                                                                                                                                                                                                                                                                                                                                                                                                                                                                                                                                                                                                                                              
                                                                                                                                                                                                                                                                                                                                                                                                                                                                                                                                                                                                                                                                                                                                                                                                                                                                                    
\vspace{0.25cm}

A {\em graph} $\mathcal{G}$ is a pair $(\mathcal{V},\mathcal{E})$, where $\mathcal{V}=\mathcal{V}(\mathcal{G})$ is a finite set of vertices, and the set of edges $\mathcal{E}=\mathcal{E}(\mathcal{G})$ is a subset of $\{\{u,v\} \mid u,v \in \mathcal{V}, u\neq v\}$. We write $u \sim v$ if $\{u,v\}\in\mathcal{E}(\mathcal{G})$ and we say $u$ is a \emph{neighbour} of $v$. The sequence of vertices $\{u_i\}_{i=0}^{n}$ is a \emph{path between $u_0$ and $u_n$} in a graph $\mathcal{G}\equiv (\mathcal{V},\mathcal{E})$, if $u_0,u_1,\ldots,u_n\in \mathcal{V}$, $u_{i-1}\sim u_i$ for $1 \le i\le n$, and $u_i\neq u_j$ for $0\le i<j\le n$.  The sequence of vertices $\{u_i\}_{i=0}^{n}$ is a \emph{cycle} in $\mathcal{G}\equiv (\mathcal{V},\mathcal{E})$, if $u_0,u_1,\ldots,u_n\in \mathcal{V}$, $u_{i-1}\sim u_i$ for $1 \le i\le n$, $u_i\neq u_j$ for $1\le i<j\le n$, and $u_0=u_n$. A \emph{tree} 
is a connected graph that contains no cycle. For any two distinct vertices in a tree there exists a unique cycle-free path in the tree that connects them.

For ${\cal A}\subseteq {{\cal S}_m}$, we define $\mathcal{G}({\cal A})\equiv (\mathcal{V}(\mathcal{G}({\cal A})),\mathcal{E}(\mathcal{G}({\cal A})))$ to be the graph of ${\cal A}$, i.e., the graph whose vertices $\mathcal{V}(\mathcal{G}({\cal A}))$ are the white squares in ${\cal A}$, and whose edges $\mathcal{E}(\mathcal{G}({\cal A}))$ are the unordered pairs of white squares, that have a common side. The \emph{top row} in ${\cal A}$ is the set of all white squares in $\{S_{i,m-1,m}\mid 0\le i\le m-1 \}$. The bottom row, left column, and right column in ${\cal A}$  are defined analogously. A \emph{top exit} in ${\cal A}$ is a white square in the top row, such that there is a white square in the same column in the bottom row. A \emph{bottom exit} in ${\cal A}$  is defined analogously. A \emph{left exit} in ${\cal A}$ is a white square in the left column, such that there is a white square in the same row in the right column. A \emph{right exit} in ${\cal A}$ is defined analogously. 
While a top exit together with the corresponding bottom exit constitute a \emph{vertical exit pair}, a left exit and the corresponding right exit constitute a \emph{horizontal exit pair}.

We recall the definition of a labyrinth pattern, see \cite[page 3]{laby_4x4}.

\begin{definition}
A non-empty $m\times m$-pattern ${\cal A} \subseteq {{\cal S}_m}$, $m \ge 3$, is
called an $m\times m$-\emph{labyrinth pattern} (in short, \emph{labyrinth
pattern}) if  ${\cal A}$ satisfies 
\begin{itemize}
\item 
\textbf{The tree property.}
$\mathcal{G}({\cal A})$ is a tree.
\item 
\textbf{The exits property.}
${\cal A}$ has exactly one vertical exit pair, and exactly one horizontal exit pair.
\item \textbf{The corner property.}
If there is a white square in ${\cal A}$ at a corner of ${\cal A}$, then there is no white square in ${\cal A}$ at the diagonally opposite corner of ${\cal A}$. 
\end{itemize}
\end{definition}
\begin{definition}
For any labyrinth pattern ${\cal A}\in {\cal S}_m$ whose horizontal and vertical exit pair lies in row $r$ and column $c$, respectively, and $r, c\in \{1,\dots,m  \}$, we call the ordered pair $(r,c)$ the \emph{exits positions pair} of the pattern ${\cal A}$.
\end{definition}

\begin{assumptions}\label{ass:consistency} Let $(\widetilde{\mathcal A}_k)_{k\ge 1}$ be a sequence of collections of labyrinth patterns, where 
$\widetilde{\mathcal A}_k=\{{\mathcal A}_{k,h}, h=1,\dots, s_k\}$, with
$s_1=1$, and $s_k\ge 1$ for $k\ge 2$.  
\begin{itemize} 
\item \textbf{Pairwise tree consistency.}
For all $n\ge 1$, $\phi_{n+1}$ has the following property: if $W,W' \in {\mathcal W}_n$ are neighbours in ${\mathcal G}({\mathcal W}_n)$, then 
the restriction of the graph ${\cal G}({\mathcal W}_{n+1})$ to the subset of vertices that correspond to the (white) squares of level $n$ that are contained in $W'$ and $W''$  is a tree, for all neighbouring squares $W,W' \in {\mathcal W}_n$. 
\item \textbf{Exits consistency.}
All patterns in the collection $\widetilde{\mathcal A}_k$ have the same exits positions pair $(r_k,c_k)$ and the same width $m_k$, for all $k\ge 1$.
\item \textbf{Corner consistency.} For all $k\ge 1$, if in a pattern $A \in{\widetilde{\mathcal A}_k}$ there is a white square in a corner then there exists no pattern $A' \in{\widetilde{\mathcal A}_k}$ with a white square at the diagonally opposite corner. 
\end{itemize}
\end{assumptions}

Throughout this article we assume that the Assumptions \ref{ass:consistency} hold. 

\begin{proposition}\label{proposition:prop123}
${\mathcal W}_n$ has the tree property, the exits property and the corner property, i.e.,  ${\mathcal W}_n$ is a labyrinth pattern.
\end{proposition}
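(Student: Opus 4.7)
My plan is to prove the proposition by induction on $n \ge 1$. The base case is immediate: $\mathcal{W}_1 = \mathcal{A}_{1,1}$ is a labyrinth pattern by assumption. For the inductive step, I would assume $\mathcal{W}_{n-1}$ is a labyrinth pattern with some exits positions pair $(\rho, \gamma)$ and verify each of the three defining properties for $\mathcal{W}_n$ in turn.

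For the exits property, I would exploit the induction hypothesis together with exits consistency. The unique top exit $T$ and unique bottom exit $B$ of $\mathcal{W}_{n-1}$ sit in the same column $\gamma$, and every other square in the top (resp.\ bottom) row of $\mathcal{W}_{n-1}$ is black and therefore contributes no white squares at level $n$. So the top row of $\mathcal{W}_n$ consists precisely of the top row of $\phi_n(T)$, which by the exits property of labyrinth patterns contains exactly one white square at column $c_n$; the same is true for the bottom row of $\phi_n(B)$. Since all patterns in $\widetilde{\mathcal{A}}_n$ share the exit column $c_n$, the top exit of $\mathcal{W}_n$ and its partner in the bottom row both lie in global column $(\gamma-1)m_n + c_n$, giving a vertical exit pair. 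The horizontal case is symmetric.

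For the tree property, I would combine an edge count with a connectivity argument. Every $\mathcal{G}(\phi_n(W))$, $W\in \mathcal{W}_{n-1}$, is a tree with $|\phi_n(W)|-1$ edges. By exits consistency, neighbouring patterns $\phi_n(W)$ and $\phi_n(W')$ meet along their common side at exactly one exit, giving a single edge between them. Summing yields
\[
|\mathcal{E}(\mathcal{G}(\mathcal{W}_n))| = \sum_{W\in \mathcal{W}_{n-1}}(|\phi_n(W)|-1) + |\mathcal{E}(\mathcal{G}(\mathcal{W}_{n-1}))| = |\mathcal{V}(\mathcal{G}(\mathcal{W}_n))| - 1,
\]
using that $\mathcal{G}(\mathcal{W}_{n-1})$ is a tree. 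Connectivity of $\mathcal{G}(\mathcal{W}_n)$ follows from the connectivity of each $\mathcal{G}(\phi_n(W))$, the connectivity of $\mathcal{G}(\mathcal{W}_{n-1})$, and the linking exit edges. Pairwise tree consistency is what prevents any cycle from being created by the union of two adjacent patterns across their shared side, which is the only place where a cycle spanning distinct level-$(n-1)$ squares could appear; an in-pattern cycle is ruled out since each $\phi_n(W)$ is a tree.

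For the corner property, I would note that $\mathcal{W}_n$ has a white square at a given corner if and only if the containing level-$(n-1)$ square is a white corner of $\mathcal{W}_{n-1}$ \emph{and} the pattern assigned there has a white square at the matching corner. If $\mathcal{W}_n$ had white squares at two diagonally opposite corners, the two enclosing level-$(n-1)$ squares would be diagonally opposite white corners of $\mathcal{W}_{n-1}$, contradicting the induction hypothesis (the corner consistency of $\widetilde{\mathcal{A}}_n$ supplies an additional safeguard at the pattern level, but here is already subsumed by the induction). The main obstacle is the tree property: the edge-count works cleanly only because exits consistency forces exactly one interface edge per adjacency, and one has to invoke pairwise tree consistency to exclude interface cycles without any double counting; once that step is clean, the exits and corner properties reduce to bookkeeping.
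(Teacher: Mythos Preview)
Your overall strategy---induction, with the tree property handled by an edge count $|\mathcal E|=|\mathcal V|-1$ plus connectivity---is sound and genuinely different from the paper's proof, which instead argues the acyclicity of $\mathcal G(\mathcal W_n)$ by contradiction via a minimal-cycle argument projected down to $\mathcal G(\mathcal W_{n-1})$. However, there is a real gap in your justification of the edge count.

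You assert that exits consistency forces \emph{exactly one} interface edge between adjacent patterns $\phi_n(W)$ and $\phi_n(W')$. This is false. Exits consistency only ensures that the right exit of $\phi_n(W)$ and the left exit of $\phi_n(W')$ sit in the same row $r_n$, so there is \emph{at least} one interface edge. But a labyrinth pattern may have additional white squares in its boundary columns that are not exits (a white square in the right column with no partner in the left column is perfectly allowed), and nothing in exits consistency prevents such squares in $\phi_n(W)$ and $\phi_n(W')$ from lining up to create further interface edges. What actually pins the number of interface edges to one is pairwise tree consistency: each of $\mathcal G(\phi_n(W))$ and $\mathcal G(\phi_n(W'))$ is a tree (say on $a$ and $b$ vertices), their union restricted to $W\cup W'$ is required to be a tree (hence $a+b-1$ edges), and the two pieces already supply $(a-1)+(b-1)$ internal edges, leaving exactly one interface edge.

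With this correction your displayed edge count is valid, and together with connectivity (which you argue correctly) it gives the tree property outright. Your separate cycle-exclusion remark---that a cycle crossing level-$(n{-}1)$ squares could only live across a single shared side---is not justified as stated (a cycle could in principle visit many level-$(n{-}1)$ squares), but it is also unnecessary once the edge count is in place. The exits and corner arguments are fine. Once the attribution is repaired, your route is arguably cleaner than the paper's.
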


\begin{proof}
The proof works by induction. For ${\mathcal W}_1={\mathcal A}_{1,1}$ all three
properties are satisfied, since ${\mathcal A}_{1,1}$ is a labyrinth pattern.
Let $n\ge 2.$ We assume that ${\mathcal W}_{n-1}$ has the tree property,
the exits property and the corner property, and show that then ${\mathcal W}_{n}$
also has these three properties. The corner property follows immediately
from the corner property of ${\mathcal W}_{n-1}$ and from the corner property
and the corner hypothesis satified by the labyrinth patterns in
$\widetilde{\mathcal A}_n$.

The exits property of  ${\mathcal W}_n$ follows immediately from the exits 
property of ${\mathcal W}_{n-1}$ and from the exits property and the
exits hypothesis for the labyrinth patterns in $\widetilde{\mathcal A}_n$.

In order to prove that  ${\mathcal W}_n$  has the tree property, we proceed analogously as in the case of mixed or self-similar labyrinth fractals. ${\mathcal G}({\mathcal W}_n)$ is connected, by the connectedness of  the tree  ${\mathcal G}({\mathcal W}_{n-1})$, and by the exits property and the exits hypothesis for the patterns in $\widetilde{\mathcal A}_n$. Now, we give an indirect proof for the fact that ${\mathcal G}({\mathcal W}_n)$ has no cycles. Therefore, we assume that there is a cycle $C=\{u_0,u_1,\dots,u_r\}$ of minimal length in ${\mathcal G}({\mathcal W}_n).$ For $u \in {\mathcal V}({\mathcal G}({\mathcal W}_n))$ let $w(u)$ be the white square in ${\mathcal V}({\mathcal G}({\mathcal W}_{n-1}))$ which contains $u$ as a subset. Let $j_0=0$, $v_0=w(u_0)$, $j_k=\min\{i~:~w(u_i)\ne v_{k-1}, ~j_{k-1}<i \le r\}$, and $v_k=w(u_{j_k}),$ for $k \ge 1.$

Let $m$ be minimal such that the set $\{i~:~w(u_i)\ne v_m, ~j_m<i\le r\}$ is empty. Then we have, in ${\mathcal G}({\mathcal W}_{n-1}),$ $v_{i-1}\sim v_i,$ for $1 \le i \le m.$ 

If  ${\mathcal G}({\mathcal W}_{n-1})$ induced on the set $\{v_0, v_1, \dots,
v_m \}$ contains a cycle in  ${\mathcal G}({\mathcal W}_{n-1}),$ this
contradicts the induction hypothesis. Since by the tree property of labyrinth
patterns  ${\mathcal G}({\mathcal A}_{n,j})$ is a tree, for all ${\mathcal
A}_{n,j}\in \widetilde{\mathcal A}_{n},$ it follows that not all white squares
of the cycle $C$ in ${\mathcal G}({\mathcal W}_{n})$ can be contained in $v_0$,
wherefrom it follows that $m\ge 1.$ Thus  ${\mathcal G}({\mathcal W}_{n})$
induced on the set $\{v_0, v_1, \dots, v_m \}$ is a tree with more than one
vertex. From the cycle-free mixing hypothesis we obtain the case $m=2$ is not
possible, thus $m\ge 3$.  The fact that  ${\mathcal G}({\mathcal W}_{n})$
induced on the set $\{v_0, v_1, \dots, v_m \}$ is a tree implies that, in oder
for the cycle $C$ to exist in ${\mathcal G}({\mathcal W}_n)$ there exists a
square $v_k \in {\mathcal W}_{n-1}),$ in the mentioned tree, such that in this
square is ``crossed'' by two distinct paths in ${\mathcal G}({\mathcal
W}_{n})$, i.e., there exist two disjoint paths $p_1,p_2$ in ${\mathcal
G}({\mathcal W}_{n})$ that connect pairs of white squares of level $n$ that lie
on the same two (distinct) sides of $v_k$, e.g., the top and bottom side, or
the top and left side or, due to symmetry arguments, one can chose to any other
such pair of sides. Since $\tilde {\mathcal A}_n$ consists only of labyrinth
patterns, the graph ${\mathcal G}({\mathcal W}_{n})$ restricted to the squares
that lie inside $v_k$ is connected, and thus there has to exist a path in
${\mathcal G}({\mathcal W}_{n})$, inside $v_k$,  that connects a square of
level $n$ lying on $p_1$ with a square of level $n$ that lies on $p_2$. This
produces a new cycle, shorter than $C$.

This contradicts the assumption that $u_0, u_1,\dots, u_r$ is a cycle of minimal length.

\end{proof}

\begin{remark} The tree property implies the uniqueness of a cycle-free path between any distinct (white) squares in ${\mathcal V}({\mathcal G}({\mathcal A}))$, for any labyrinth pattern ${\mathcal A}$,  and any distinct (white) squares in ${\mathcal V}({\mathcal G}({\mathcal W}_n))$, for any supermixed labyrinth set ${\mathcal W}_n$ of level $n$.
\end{remark}

\begin{definition}
For $n\ge 2$ we call ${\mathcal W}_n$ an $m(n)\!\!\times\!\!{}m(n)$ supermixed labyrinth set (in short, supermixed labyrinth set) of level $n$, and the limit set 
$$ L_{\infty}=\bigcap _{n\ge 1}\bigcup_{W \in {\mathcal W}_n} W$$
the supermixed labyrinth fractal generated by the sequence of collections of labyrinth patterns $(\widetilde{\mathcal A}_k)_{k\ge 1}$, 
\end{definition}

\begin{remark} One can immediately see that supermixed labyrinth fractals generalise mixed labyrinth fractals and self-similar labyrinth fractals. 
If $s_k=1$, for all $k\ge 1$, then ${\mathcal W}_n$ is a mixed labyrinth set of level $n$, and $L_{\infty}$ a mixed labyrinth fractal, as defined in \cite{mixlaby}. If we use only one pattern throughout the construction, we recover the
self-similar case from \cite{laby_4x4,laby_oigemoan}.
\end{remark}

\section{Topological properties of supermixed labyrinth fractals}
\label{sec:topological}

Recall that for any sequence $\{\widetilde{\cal
A}_k\}_{k=1}^{\infty}$ of collections of labyrinth patterns 
we assert Assumptions \ref{ass:consistency} from Section \ref{sec:notions}.

\begin{lemma}\label{lemma:Steinhaus} Let $\{\widetilde{\cal
A}_k\}_{k=1}^{\infty}$ be a sequence of collections of labyrinth patterns
 and let $n\ge 1$. Then, from every black
square in $\mathcal{G}({\cal B}_{n})$ there is a path in $\mathcal{G}({\cal
B}_{n})$ to a black square of level $n$ in $\mathcal{G}({\cal B}_{n})$.
\end{lemma}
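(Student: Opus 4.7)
The plan is to proceed by induction on $n$, leveraging Proposition~\ref{proposition:prop123}, which tells us that each $\mathcal{W}_n$ is itself a labyrinth pattern and hence $\mathcal{G}(\mathcal{W}_n)$ is a tree. The core idea is topological: a tree of white squares cannot enclose a bounded black region, so every black component must reach $\partial[0,1]^2$. Concretely, for the base case $n=1$, since $\mathcal{G}(\mathcal{W}_1)$ is a tree and each white square is a convex closed disc overlapping its neighbours along an edge or (at corners) a single point, the union $L_1=\bigcup_{W\in\mathcal{W}_1}W$ deformation retracts onto the tree $\mathcal{G}(\mathcal{W}_1)$ by iteratively pruning leaves, so $L_1$ is simply connected. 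A Jordan- or Alexander-type argument then gives that every connected component of $[0,1]^2\setminus L_1$ meets $\partial[0,1]^2$; equivalently, every connected component of $\mathcal{G}(\mathcal{B}_1)$ contains a black square touching the boundary of $[0,1]^2$.

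For the inductive step, the cleanest route is to repeat the base-case argument with $L_n$ in place of $L_1$, since by Proposition~\ref{proposition:prop123} the set $\mathcal{W}_n$ is a labyrinth pattern and $\mathcal{G}(\mathcal{W}_n)$ is again a tree. Alternatively, a two-scale combinatorial argument goes as follows: a level-$n$ black square either lies inside a level-$(n{-}1)$ black square, whose level-$n$ subdivision is a solid $m_n\times m_n$ block of blacks that is edge-connected to all $4m_n$ boundary level-$n$ subsquares of that level-$(n{-}1)$ square; or it lies inside a level-$(n{-}1)$ white square $W$, in which case applying the base case to the labyrinth pattern $\phi_n(W)\in\widetilde{\mathcal A}_n$ inside $W$ yields a $\mathcal{G}(\mathcal{B}_n)$-path to the boundary of $W$. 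One then splices such local paths across sides shared by neighbouring level-$(n{-}1)$ squares and uses the induction hypothesis applied to $\mathcal{G}(\mathcal{B}_{n-1})$ to reach $\partial[0,1]^2$.

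The principal obstacle I anticipate is the translation from topological connectedness to $\mathcal{G}(\mathcal{B}_n)$-adjacency: two black squares meeting only at a corner are topologically contiguous but not edge-adjacent, so one must exclude such pinch configurations when extracting a discrete path. The corner property of labyrinth patterns, together with Assumption~\ref{ass:consistency}'s corner consistency, is precisely the tool that rules these out, and the careful bookkeeping needed to splice local paths across level-$(n{-}1)$ boundaries while respecting edge-adjacency (rather than mere corner contact) will be the most technical part of the proof.
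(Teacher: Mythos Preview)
Your high-level reduction via Proposition~\ref{proposition:prop123} matches the paper. However, the core topological step --- that $L_1$ deformation retracts onto the tree $\mathcal{G}(\mathcal{W}_1)$ and is therefore simply connected --- is false for labyrinth patterns in general. The obstruction is not the black--black corner contact you anticipate but \emph{white--white} diagonal contact: two white squares sharing only a corner are joined in $L_1$ through a single point that the edge-tree does not record, and this pinch can produce a genuine hole in $L_1$. A concrete $5\times 5$ labyrinth pattern exhibiting this has white squares exactly at
\[
(0,2),\;(1,1),\;(1,2),\;(2,0),\;(2,1),\;(2,3),\;(2,4),\;(3,1),\;(3,2),\;(3,3),\;(4,2).
\]
One checks directly that $\mathcal{G}(\mathcal{W}_1)$ is a tree (eleven vertices, ten edges, connected), that the unique exit pairs sit in row~$2$ and column~$2$, and that all four corners are black; yet the central black square $(2,2)$ has all four side-neighbours white, so its interior is a bounded component of $[0,1]^2\setminus L_1$. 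Thus $L_1$ is not simply connected and your retraction cannot exist. The corner property you invoke does not rescue this: it constrains only the four corners of the ambient pattern, not interior $2\times 2$ checkerboard blocks.

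The same example shows that, under side-adjacency for $\mathcal{G}(\mathcal{B}_n)$, an interior black square can be graph-isolated and miss $\partial[0,1]^2$ entirely, so the assertion you set out to prove is already in trouble with that reading. The label ``Steinhaus'' points to the intended mechanism: the Steinhaus chessboard theorem pairs rook-adjacency on one colour with king-adjacency on the other, and the argument in \cite[Lemma~2]{laby_4x4} proceeds combinatorially along those lines rather than through simple connectedness of $L_n$. To close the gap you will need to adopt that asymmetric adjacency viewpoint (so that $(2,2)$ links diagonally to the black $(1,3)$ and thence to the border), or otherwise replace the simple-connectedness claim with a direct combinatorial argument.
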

\begin{proof} The proof  uses Proposition \ref{proposition:prop123} and works in the same way as in the case of labyrinth sets occuring in the construction of self-similar 
labyrinth fractals \cite[Lemma 2]{laby_4x4}.
\end{proof}

We state the following two results without the proof, since the proofs given in the self-similar case \cite{laby_4x4} work also in the more general case of the 
supermixed labyrinth sets. For more details and definitions we refer to the papers \cite{laby_4x4, laby_oigemoan}.

\begin{lemma}\label{lemma:border} Let $\{\widetilde {\cal A}_k\}_{k=1}^{\infty}$ be a sequence of collections of labyrinth patterns. If $x$ is a point in $([0,1]\times [0,1]) \setminus L_n$, then there is an arc $a \subseteq ([0,1]\times [0,1]) \setminus L_{n+1}$ between $x$ and a point in the boundary $\fr([0,1]\times [0,1])$.
\end{lemma}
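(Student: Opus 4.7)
The plan is to leverage Lemma \ref{lemma:Steinhaus} together with the key observation that every black square $B \in {\cal B}_n$ of level $n$ satisfies $B \cap L_{n+1} \subseteq \fr(B)$: at step $n+1$ of the construction only white squares of level $n$ are refined according to patterns from $\widetilde{\mathcal A}_{n+1}$, so a level-$n$ black square contains no white square of level $n+1$ in its interior. In particular the interior of any $B\in{\cal B}_n$ is entirely disjoint from $L_n$, and therefore also from $L_{n+1}\subseteq L_n$.

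If $x$ already lies on $\fr([0,1]^2)$, the constant arc at $x$ works. Otherwise, since $L_n$ is closed and $x\notin L_n$, the point $x$ lies in some black square $B_0\in {\cal B}_n$ (choosing either one if $x$ happens to sit on an edge shared by two such squares). Applying Lemma \ref{lemma:Steinhaus} to $B_0$ we obtain a chain $B_0, B_1, \dots, B_\ell$ of level-$n$ black squares, each consecutive pair sharing a side, with $B_\ell$ touching $\fr([0,1]^2)$.

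I would then construct $a$ explicitly as a polygonal curve, concatenating: a segment from $x$ to the centre $c_0$ of $B_0$; for each $i=0,\dots,\ell-1$ the two segments $[c_i,e_i]$ and $[e_i,c_{i+1}]$, where $e_i$ is the midpoint of the side common to $B_i$ and $B_{i+1}$; and a final segment from $c_\ell$ to a point $y$ in the relative interior of the side of $B_\ell$ that lies on $\fr([0,1]^2)$. Every interior point of these segments lies in $\mathrm{int}(B_i)$ for some $i$; the junction point $e_i$ lies on an edge shared by the two level-$n$ black squares $B_i, B_{i+1}$; and $y$ lies on an edge whose unique adjacent level-$n$ square is the black $B_\ell$. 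All such points are outside $L_n\supseteq L_{n+1}$ by the observation above, so the whole polygonal curve sits in $[0,1]^2\setminus L_{n+1}$.

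The main obstacle is ensuring this polygonal curve is genuinely an arc, i.e.\ injective, since the Steinhaus chain may revisit a black square and the piecewise-linear trajectory may then cross itself. This difficulty is dealt with exactly as in the self-similar case of \cite{laby_4x4}: inside each $\mathrm{int}(B_i)$ one may replace the pieces $[c_i,e_{i-1}]\cup[c_i,e_i]$ by an arbitrarily small piecewise-linear detour that avoids all previously drawn portions while remaining in $\mathrm{int}(B_i)$, and hence in $[0,1]^2\setminus L_{n+1}$. A standard finite-step removal of self-crossings then yields the required arc.
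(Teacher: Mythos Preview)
Your argument is correct and is essentially the approach the paper has in mind when it defers to the self-similar case in \cite{laby_4x4}: use Lemma~\ref{lemma:Steinhaus} to obtain a chain of level-$n$ black squares from $B_0\ni x$ to a square touching the border, then thread a polygonal arc through their interiors and shared edge-midpoints. Note that your construction in fact produces an arc in $[0,1]^2\setminus L_n$, slightly stronger than the stated $[0,1]^2\setminus L_{n+1}$; the only cosmetic blemish is the ``constant arc'' in the boundary case, which is degenerate but easily replaced by a short segment along the border side of the black square containing $x$.
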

\begin{corollary}\label{corollary:border} Let $\{\widetilde{\cal A}_k\}_{k=1}^{\infty}$ be a sequence of collections of labyrinth patterns and let $n\ge 1$. If $x$ is a point in $([0,1]\times [0,1]) \setminus L_{\infty}$, then there is an arc $a \subseteq ([0,1]\times [0,1]) \setminus L_{\infty}$ between $x$ and a point in $\fr([0,1]\times [0,1])$.
\end{corollary}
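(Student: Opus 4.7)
The plan is to reduce the corollary to Lemma~\ref{lemma:border} by finding a single finite level $n$ at which $x$ is already outside $L_n$, and then noting that an arc avoiding $L_{n+1}$ automatically avoids the smaller set $L_\infty$.

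First, I would observe that the sequence $(L_n)_{n\ge 1}$ is a decreasing sequence of compact (hence closed) subsets of $[0,1]\times[0,1]$, with $L_\infty=\bigcap_{n\ge 1}L_n$. Consequently, if $x \notin L_\infty$, there must exist some index $n\ge 1$ such that $x\notin L_n$; for otherwise $x$ would belong to every $L_n$ and hence to their intersection. This step just uses the definition of $L_\infty$ and the nestedness of the $L_n$'s, which is already noted right before the definition of $L_\infty$.

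Next, I would apply Lemma~\ref{lemma:border} to this particular $n$: it produces an arc $a\subseteq([0,1]\times[0,1])\setminus L_{n+1}$ joining $x$ to a point in $\fr([0,1]\times[0,1])$. Since $L_\infty=\bigcap_{k\ge 1}L_k\subseteq L_{n+1}$, the inclusion
\[
([0,1]\times[0,1])\setminus L_{n+1}\;\subseteq\;([0,1]\times[0,1])\setminus L_\infty
\]
holds, so the very same arc $a$ lies in $([0,1]\times[0,1])\setminus L_\infty$, which is exactly the conclusion of the corollary.

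I do not expect a real obstacle here: the argument is a short deduction from Lemma~\ref{lemma:border} together with the monotonicity of $(L_n)$. The only point that deserves attention is the initial existence of $n$ with $x\notin L_n$, which relies on the $L_n$'s being closed (equivalently, $[0,1]^2\setminus L_\infty$ being a union of the open sets $[0,1]^2\setminus L_n$). No further machinery from Section~\ref{sec:notions} beyond Proposition~\ref{proposition:prop123} (already invoked in Lemma~\ref{lemma:border}) is needed.
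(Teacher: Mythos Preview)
Your argument is correct and is exactly the standard deduction the paper has in mind; the paper simply omits the proof and refers to the self-similar case, where the same two-line reasoning (pick $n$ with $x\notin L_n$, apply Lemma~\ref{lemma:border}, then use $L_\infty\subseteq L_{n+1}$) is used. One minor remark: the existence of $n$ with $x\notin L_n$ follows directly from the definition $L_\infty=\bigcap_n L_n$ and does not actually require closedness of the $L_n$.
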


Now, let us recall that a \emph{continuum} is a compact connected Hausdorff space, and a \emph{dendrite} is a locally connected continuum that contains no simple closed curve.

\begin{theorem}\label{theorem:dendrite}
Any supermixed labyrinth fractal $L_{\infty}$ generated by a sequence of collections of labyrinth patterns is a dendrite.
\end{theorem}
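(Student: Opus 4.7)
The goal is to show $L_\infty$ is a dendrite, i.e., a locally connected continuum containing no simple closed curve. My plan is to verify each of the four ingredients — compactness, connectedness, local connectedness, absence of simple closed curves — in turn.

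Compactness and connectedness are quick: $L_\infty$ is the intersection of the compact sets $L_n\subset[0,1]\times[0,1]$, and each $L_n$ is path-connected because Proposition~\ref{proposition:prop123} gives that $\mathcal{G}(\mathcal{W}_n)$ is a tree and neighbouring level-$n$ white squares share a full side. A decreasing intersection of compact connected sets in a Hausdorff space is connected, so $L_\infty$ is connected.

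For local connectedness the key step is a sublemma: for every $n\ge 1$ and every $W\in\mathcal{W}_n$, the set $L_\infty\cap W$ is connected. I would prove this by induction on $k$, showing that the subgraph of $\mathcal{G}(\mathcal{W}_k)$ spanned by those level-$k$ white squares contained in $W$ is itself a tree for every $k\ge n$; the inductive step relies on the tree property of the labyrinth patterns in $\widetilde{\mathcal A}_{k+1}$ together with the pairwise tree consistency and exits consistency of Assumptions~\ref{ass:consistency}, essentially mimicking the proof of Proposition~\ref{proposition:prop123} inside a fixed $W$. Granting this, given $p\in L_\infty$ and an open neighbourhood $U$ of $p$ in $L_\infty$, I would take $n$ large enough that the union $T$ of level-$n$ white squares containing $p$ is contained in $U$, and set $V:=L_\infty\cap T$. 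Then $V$ is a finite union of connected sets that share the point $p$, hence connected, and it is an $L_\infty$-neighbourhood of $p$ because any ball around $p$ of radius smaller than the distance from $p$ to the finitely many level-$n$ white squares not containing $p$ has its $L_\infty$-intersection inside $V$. As $n\to\infty$ the diameters of these $V$ tend to $0$, giving local connectedness at $p$.

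For the absence of a simple closed curve I would argue by contradiction using Corollary~\ref{corollary:border}. Suppose $\gamma\subseteq L_\infty$ is a simple closed curve, and let $R$ be the bounded component of $\mathbb{R}^2\setminus\gamma$ provided by the Jordan curve theorem. Since $\gamma\subseteq[0,1]\times[0,1]$ and $R$ lies in the convex hull of $\gamma$, we have $R\subseteq(0,1)\times(0,1)$, and in particular $R\cap\fr([0,1]\times[0,1])=\emptyset$. A short auxiliary argument shows $L_\infty$ has empty interior — any nonempty open subset of $[0,1]\times[0,1]$ contains the interior of a black square of sufficiently high level — so one can pick $x\in R\setminus L_\infty$. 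Corollary~\ref{corollary:border} then yields an arc $\alpha\subseteq[0,1]\times[0,1]\setminus L_\infty$ from $x$ to some $y\in\fr([0,1]\times[0,1])$; because $\alpha$ is connected and disjoint from $\gamma\subseteq L_\infty$, it is contained in the component of $\mathbb{R}^2\setminus\gamma$ containing $x$, i.e.\ $\alpha\subseteq R$, forcing $y\in R\cap\fr([0,1]\times[0,1])=\emptyset$, a contradiction. I expect the main obstacle to be the inductive sublemma on connectedness of $L_\infty\cap W$, since one must verify that restricting the tree $\mathcal{G}(\mathcal{W}_k)$ to squares inside a prescribed $W$ stays a tree at every subsequent level rather than breaking into a forest, which is exactly where the pairwise tree consistency is used essentially.
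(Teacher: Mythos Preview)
Your proof is correct and follows essentially the same route as the paper's: the paper likewise invokes the Jordan Curve theorem together with Corollary~\ref{corollary:border} to rule out simple closed curves, and for local connectedness it cites the Hahn--Mazurkiewicz--Sierpi\'nski theorem. Your sublemma that each $L_\infty\cap W$ is connected is exactly the input needed for the Sierpi\'nski $S$-property characterisation of local connectedness, so the only difference is that you unpack that named theorem into an explicit connected-neighbourhood-basis argument rather than citing it.
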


\begin{proof}
The proof, which uses the 
Hahn-Mazurkiewicz-Sierpi\'nski theorem \cite[Theorem~2, p.256]{Kuratowski} 
as well as the Jordan Curve theorem, is almost identical to the the proof
of Theorem 1 in \cite{mixlaby} but uses 
Corollary~\ref{corollary:border} in place of the corresponding result there.
\end{proof}

\noindent As a consequence of the fact that $L_{\infty}$ is a dendrite, for any two  points  $x\ne y$ in $L_{\infty}$ there exists a unique arc in
$L_{\infty}$ that connects them  \cite[Theorem~3, par. 47, V, p. 181]{Kuratowski}. Throughout this paper we denote by ${{a}}(x,y)$ the unique arc in $L_{\infty}$ with endpoints $x$ and $y$.

\section{Exits}\label{sec:exits}
Let $W_n^{\etop}\in {\cal W}_{n}$ be the top exit of ${\cal W}_{n}$, for $n\ge 1$. We call $\bigcap_{n=1}^{\infty}W_n^{\etop}$ the \emph{top exit of} 
$L_{\infty}$. The other exits of $L_{\infty}$, $W_n^{\ebottom}, W_n^{\eleft}, W_n^{\eright},$ are defined analogously. We note that 
the exits property for ${\cal W}_n$ yields that $(x,1),(x,0)\in L_{\infty}$ if and only if $(x,1)$ is the top exit of $L_{\infty}$ and $(x,0)$ 
is the bottom exit of $L_{\infty}$. The analogous holds also for the left and the right exit of the fractal. 

Let $n\ge 1$, $W\in {\cal W}_{n}$, and $t$ be the intersection of $L_{\infty}$ with the top edge of $W$. 
Then we call $t(W)$ the \emph{top exit} of $W$. Analogously we define the \emph{bottom exit} $b(W)$, the\emph{ left exit} $l(W)$ and
the \emph{right exit} $r(W)$ of $W$.
 Each of these four exits is unique, by the uniqueness of the four exits of a 
supermixed mixed labyrinth fractal and by the fact that each 
set of the form 
$L_{\infty} \cap W$, with $W\in {\cal W}_{n}$, is a supermixed labyrinth fractal scaled by the factor $m(n)$.
Thus, we have defined the notion of \emph{exit} for 
three different types of objects: for supermixed labyrinth sets of level $n$, for $L_{\infty}$, 
and for squares in ${\cal W}_{n}$, for $n\ge 1$.

The following result immediately follows from the construction of supermixed labyrinth fractals.
\begin{proposition} \label{prop:exits_coordinates}Let $\{\widetilde{\cal A}_k\}_{k=1}^{\infty}$ be a sequence of collections of 
labyrinth patterns, as above.
 \begin{itemize}
 \item[(a)] 
If $(x_1^t, x_2^t)$ and $(x_1^b, x_2^b)$ are the Cartesian coordinates of the top exit and bottom exit, respectively, 
in $L_{\infty}$,
then
\[
x_1^t=x_1^b=\sum_{k=1}^{\infty}\frac{c_k-1}{m(k)}, ~~x_2^t=1, ~~x_2^b=0.
 \]
 \item[(b)] 
If ${x_1^{l}}, x_2^{l}$ and $x_1^r, x_2^r$ are the Cartesian coordinates of the left exit and right exit, respectively, in $L_{\infty}$,
then
\[
x_2^{l}=x_2^r=\sum_{k=1}^{\infty}\frac{m_k-r_k}{ m(k)}, ~~x_1^{l}=0, ~~x_1^r=1.
 \]
 \end{itemize}
\end{proposition}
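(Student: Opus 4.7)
The plan is to use Exits consistency (Assumptions~\ref{ass:consistency}) to reduce the computation of each exit of $L_\infty$ to a simple telescoping sum. At each level $n$ the exit square $W_n^{\etop}$ (and similarly the other three) is determined entirely by the common exits positions pair $(r_n,c_n)$ of the patterns in $\widetilde{\mathcal A}_n$, regardless of which particular pattern $\phi_n$ selects, so the coordinates admit a clean inductive description. Taking the intersection then gives the claimed series by $1/m(n)\to 0$.

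For part (a), I would first treat the top exit. The unique pattern $\mathcal A_{1,1}$ has its vertical exit pair in column $c_1$, so $W_1^{\etop}$ has bottom-left corner $((c_1-1)/m_1,\,1-1/m_1)$. Assuming inductively that the bottom-left corner of $W_{n-1}^{\etop}$ is $(a_{n-1},\,1-1/m(n-1))$ with $a_{n-1}=\sum_{k=1}^{n-1}(c_k-1)/m(k)$, Exits consistency guarantees that every pattern in $\widetilde{\mathcal A}_n$ has its top exit in column $c_n$; hence, regardless of the value of $\phi_n(W_{n-1}^{\etop})$, the top exit of level $n$ inside $W_{n-1}^{\etop}$ has bottom-left corner $(a_{n-1}+(c_n-1)/m(n),\,1-1/m(n))$. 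This yields the telescoping recurrence $a_n=a_{n-1}+(c_n-1)/m(n)$. Since $m_k\ge 3$ forces $1/m(n)\to 0$, the nested intersection $\bigcap_n W_n^{\etop}$ is a single point with the stated coordinates. The bottom exit is handled identically, because the vertical exit pair of every labyrinth pattern shares its column, so the same series controls $x_1^b$, while $x_2^b=0$ holds at every level by construction.

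Part (b) is symmetric. One has $x_1^l=0$ at every level, and an analogous recurrence $b_n=b_{n-1}+(m_n-r_n)/m(n)$ governs the $y$-coordinate of the bottom edge of $W_n^{\eleft}$, again enabled by Exits consistency (every pattern in $\widetilde{\mathcal A}_n$ has its horizontal exit pair in the same row $r_n$). The right-exit coordinates then follow from the fact that the horizontal exit pair shares its row, whence $x_2^r=x_2^l$, and $x_1^r=1$ by construction.

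There is no serious obstacle here. The only point that requires care is the bookkeeping of the row/column indexing convention: matching the stated formula forces one to read row $r_k$ as the $r_k$-th row counted from the top of the pattern, so that the $r_k$-th row occupies $y$-coordinates in $[(m_k-r_k)/m_k,\,(m_k-r_k+1)/m_k]$. Once this convention is fixed, both parts reduce to the same routine telescoping computation, whose crucial ingredient is precisely the Exits consistency hypothesis that makes the position of each successive exit square independent of the choice made by $\phi_n$.
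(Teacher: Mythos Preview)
Your argument is correct and is precisely the direct verification the paper has in mind: the paper does not actually give a proof of this proposition but simply states that it ``immediately follows from the construction of supermixed labyrinth fractals.'' Your inductive computation of the bottom-left corner of $W_n^{\etop}$ via Exits consistency, together with the observation that the choice of $\phi_n$ is irrelevant for the exit position, is exactly that construction spelled out, including the careful remark on the row-numbering convention needed to match the stated formula for part~(b).
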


\section{Paths in supermixed labyrinth sets. Path matrices and counting matrices} \label{sec:path_matrices}

As in the setting of self-similar labyrinth fractals or mixed labyrinth fractals, the first step is to study paths in prefractals, i.e., in this case in (the graphs of) supermixed labyrinth sets. More precisely, we look at the construction method for paths between distinct exits of supermixed labyrinth sets. 
Sometimes we will just skip ``supermixed'' when it is understood from the context that we deal with supermixed objects.

Therefor, let us first introduce some notation.
We call a path in $\mathcal{G}({\cal W}_{n})$ a $\A$\emph{-path} if it leads from the top to 
the bottom exit of ${\cal W}_n$. 
The $\B,\C,\D,\E$, and $\F$\emph{-paths} lead from left to right, top to right, right to bottom, bottom to left, and left to 
top exits, respectively. Formally, we denote such a path by $\p_i ({\cal W}_n),$  with $i\in \{\A,\B,\C,\D,\E,\F\}.$  Let $\A(n),\B(n),\C(n),\D(n),\E(n)$, and $\F(n)$ be the 
length of the respective path in 
$\mathcal{G}({\cal W}_{n})$, for $n\ge 1,$ and 
 $\A_{k,h},\B_{k,h},\C_{k,h},\D_{k,h},\E_{k,h}$, and $\F_{k,h}$ the length of the respective path in $\mathcal{G}({\cal A}_{k,h})$, 
for $k\ge 1$ and $1\le h \le s_k$. Formally, we denote such a path in  (the graph of) this pattern by $\p_i({\cal A}_{k,h}),$ with $i\in \{\A,\B,\C,\D,\E,\F\}.$ By the length of a path in a labyrinth set (of level $n$) or labyrinth pattern we mean the number of squares (of level $n$) in the path.
For $n=k=1$ the two path lengths coincide, i.e., $\A(1)=\A_{1,1}, \dots, \F(1)=\F_{1,1}$.

Both in the case of self-similar fractals \cite{laby_4x4, laby_oigemoan} and in the case of mixed labyrinth fractals \cite{mixlaby} we used the same method for the construction of paths in prefractals. Before describing this construction method for the case of supermixed labyrinth sets, let us remark some facts that play an essential role in the reasoning about paths in supermixed labyrinth sets and arcs in supermixed labyrinth fractals.

\vspace{0.2cm}
\begin{remark} One can prove (e.g., by using an indirect proof) that if a square $W\in {\mathcal V}({\mathcal G}({\mathcal W}_n))$ is one of the four exits of ${\mathcal W}_n$, then it has as a subset the square $U\in {\mathcal V}({\mathcal G}({\mathcal W}_{n+1}))$ which is the exit in ${\mathcal W}_{n+1}$ of the same type (\emph{top, bottom, left}, or \emph{right}) as $W$ in ${\mathcal W}_{n}$ . Moreover, from the tree property of ${\mathcal G}({\mathcal W}_{n})$, for all $n\ge 1$, it also follows that for any distinct exits $W',W''$ of ${\mathcal W}_n$  the corridor $\Gamma(p_n(W',W''))=\bigcup_{W\in p_n(W',W'')} W$ of the path $p_n(W',W'')=\{W'=W_1, W_2, \dots, W_r=W'' \}$  in ${\mathcal G}({\mathcal W}_{n})$ contains as a subset the corridor $ \Gamma(p_{n+1}(U',U''))=\bigcup_{U\in p_{n+1}(U',U'')} U$ of the path $p_{n+1}(U',U'')=\{U'=U_1, U_2, \dots, U_r=U'' \}$ in ${\mathcal W}_{n+1}$, where $U'$ and $U''$ is, respectively, the same type of exit in ${\mathcal W}_{n+1}$,  as $W'$ and $W''$ in ${\mathcal W}_{n}$. We remark that the purpose of the index $n$ in the notation $p_n(W',W'')$ for the path is to emphasise that the path is in the graph ${\mathcal G}({\mathcal W}_n)$.
\end{remark}

\begin{figure}[hhhh]
\begin{center}
\includegraphics[scale=1]{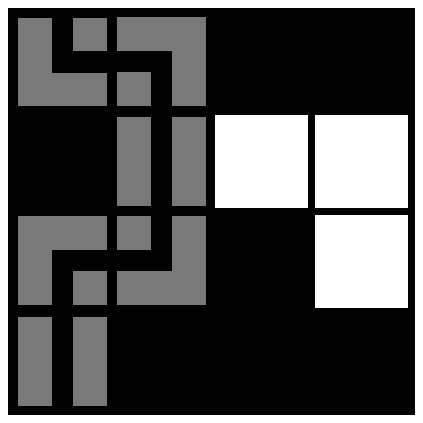}~~
\includegraphics[scale=1]{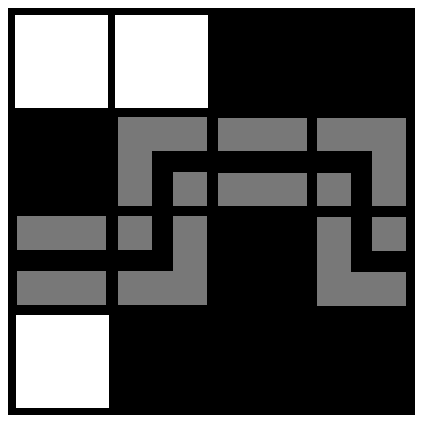}
\caption{Paths from top to bottom and from left to right exit of ${\cal A}_{1}$}\label{fig:Squares of type $A$ and $B$.}
\end{center}
\end{figure}
\begin{figure}[hhhh]
\begin{center}
\includegraphics[scale=1]{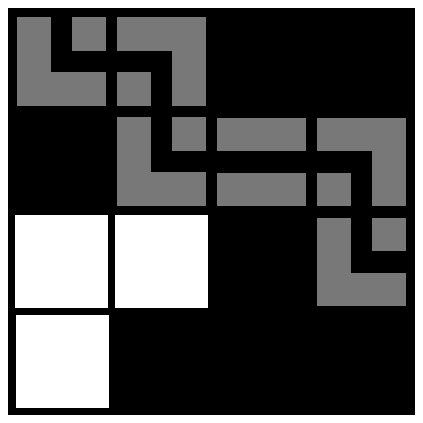}~~
\includegraphics[scale=1]{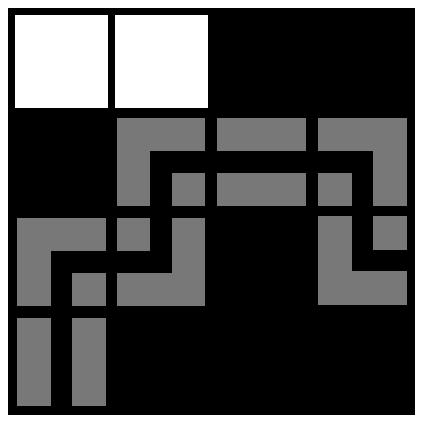}
\caption{Paths from top to right and from bottom to right exit of ${\cal A}_{1}$}\label{fig:Squares of type $C$ and $D$.}
\end{center}
\end{figure}
\begin{figure}[hhhh]
\begin{center}
\includegraphics[scale=1]{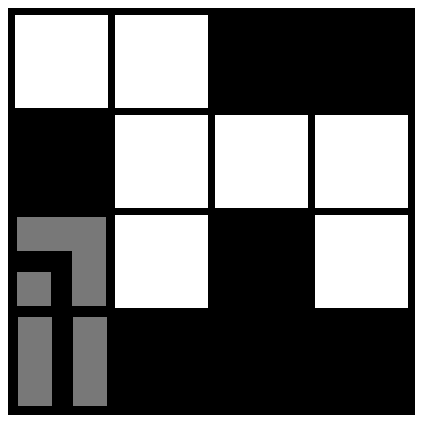}~~
\includegraphics[scale=1]{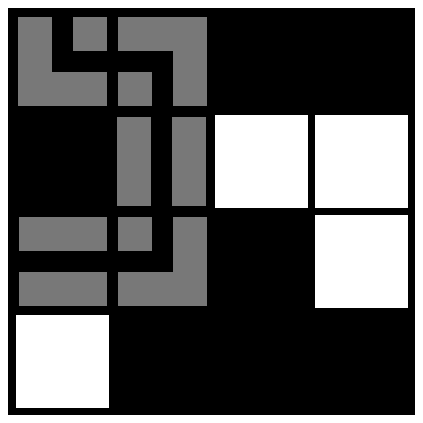}
\caption{Paths from left to bottom and top to left exit of ${\cal A}_{1}$}\label{fig:Squares of type $E$ and $F$.}
\end{center}
\end{figure}

\begin{figure}[h!]
\begin{center}
\begin{tikzpicture}[scale=.41]
\draw[line width=2pt] (0,0) rectangle (10,10);

\filldraw[fill=gray!50, draw= black] (5,0) rectangle (7.5,5);
\filldraw[fill=gray!50, draw= black] (5,7.5) rectangle (7.5,10);
\filldraw[fill=gray!50, draw= black] (7.5,2.5) rectangle (10,10);
\draw[line width=6 pt, color=gray] (6.25, 0) -- (6.25, 3.92);
\draw[line width=6 pt, color=gray] (6.25,3.75) -- (8.87,3.75);
\draw[line width=6 pt, color=gray] (8.75, 3.59) -- (8.75, 8.91);
\draw[line width=6 pt, color=gray] (8.75, 8.75) -- (6.08, 8.75);
\draw[line width=6 pt, color=gray] (6.25, 8.75) -- (6.25, 10);
\draw[line width=2pt] (2.5, 0) -- (2.5,10);
\draw[line width=2pt] (5, 0) -- (5,10);
\draw[line width=2pt] (7.5, 0) -- (7.5,10);
\draw[line width=2pt] (0, 2.5) -- (10,2.5);
\draw[line width=2pt] (0, 5) -- (10,5);
\draw[line width=2pt] (0, 7.5) -- (10,7.5);
\filldraw[fill=black, draw=black] (0,0) rectangle (5, 5);
\filldraw[fill=black, draw=black] (0,7.5) rectangle (2.5, 10);
\filldraw[fill=black, draw=black] (5,5) rectangle (7.5, 7.5);
\filldraw[fill=black, draw=black] (7.5,0) rectangle (10, 2.5);

\draw[line width=2pt] (11,0) rectangle (21,10);

\filldraw[fill=gray!50, draw= black] (16,0) rectangle (18.5,5);
\filldraw[fill=gray!50, draw= black] (18.5,2.5) rectangle (21,7.5);

\draw[line width=6 pt, color=gray] (17.25, 0) -- (17.25, 3.91);
\draw[line width=6 pt, color=gray] (17.25,3.75) -- (19.87,3.75);
\draw[line width=6 pt, color=gray] (19.75, 3.60) -- (19.75, 6.25);
\draw[line width=6 pt, color=gray] (19.58, 6.25) -- (21, 6.25);

\draw[line width=2pt] (13.5, 0) -- (13.5,10);
\draw[line width=2pt] (16, 0) -- (16,10);
\draw[line width=2pt] (18.5, 0) -- (18.5,10);
\draw[line width=2pt] (11, 2.5) -- (21,2.5);
\draw[line width=2pt] (11, 5) -- (21,5);
\draw[line width=2pt] (11, 7.5) -- (21,7.5);
\filldraw[fill=black, draw=black] (11,0) rectangle (16, 5);
\filldraw[fill=black, draw=black] (11,7.5) rectangle (13.5, 10);
\filldraw[fill=black, draw=black] (16,5) rectangle (18.5, 7.5);
\filldraw[fill=black, draw=black] (18.5,0) rectangle (21, 2.5);

\end{tikzpicture}
\caption{Paths from bottom to top and from bottom to right exit of $\mathcal{A}_{2,1}$}
\label{fig:A21_paths}
\end{center}
\end{figure}

\begin{figure}[h!]
\begin{center}
\begin{tikzpicture}[scale=.41]
\draw[line width=2pt] (0,0) rectangle (10,10);
\draw[line width=2pt] (11,0) rectangle (21,10);

\filldraw[fill=gray!50, draw= black] (0,5) rectangle (5,7.5);
\filldraw[fill=gray!50, draw= black] (2.5,2.5) rectangle (10,5);
\filldraw[fill=gray!50, draw= black] (7.5,5) rectangle (10,7.5);
\draw[line width=6 pt, color=gray] (0, 6.25) -- (3.99, 6.25);
\draw[line width=6 pt, color=gray] (8.75, 3.59) -- (8.75, 6.25);
\draw[line width=6 pt, color=gray] (3.75,6.30 ) -- (3.75, 3.75);
\draw[line width=6 pt, color=gray] (3.51,3.75 ) -- (8.92, 3.75);
\draw[line width=6 pt, color=gray] (8.50 ,6.25 ) -- (10, 6.25);

\draw[line width=2pt] (2.5, 0) -- (2.5,10);
\draw[line width=2pt] (5, 0) -- (5,10);
\draw[line width=2pt] (7.5, 0) -- (7.5,10);
\draw[line width=2pt] (0, 2.5) -- (10,2.5);
\draw[line width=2pt] (0, 5) -- (10,5);
\draw[line width=2pt] (0, 7.5) -- (10,7.5);
\filldraw[fill=black, draw=black] (0,0) rectangle (2.5, 5);
\filldraw[fill=black, draw=black] (0,7.5) rectangle (2.5, 10);
\filldraw[fill=black, draw=black] (5,5) rectangle (7.5, 7.5);
\filldraw[fill=black, draw=black] (2.5,0) rectangle (5, 2.5);
\filldraw[fill=black, draw=black] (7.5,0) rectangle (10, 2.5);
\filldraw[fill=black, draw=black] (7.5,7.5) rectangle (10, 10);

\filldraw[fill=gray!50, draw= black] (11,5) rectangle (16,7.5);
\filldraw[fill=gray!50, draw= black] (13.5,7.5) rectangle (18.5,10);

\draw[line width=6 pt, color=gray] (11, 6.25) -- (15.08, 6.25);
\draw[line width=6 pt, color=gray] (14.92 ,6.10 ) -- ( 14.92, 8.98);
\draw[line width=6 pt, color=gray] ( 14.92, 8.75)  -- ( 17.45 , 8.75);
\draw[line width=6 pt, color=gray] ( 17.25 , 8.75) -- ( 17.25, 10 );

\draw[line width=2pt] (13.5, 0) -- (13.5,10);
\draw[line width=2pt] (16, 0) -- (16,10);
\draw[line width=2pt] (18.5, 0) -- (18.5,10);
\draw[line width=2pt] (11, 2.5) -- (21,2.5);
\draw[line width=2pt] (11, 5) -- (21,5);
\draw[line width=2pt] (11, 7.5) -- (21,7.5);
\filldraw[fill=black, draw=black] (11,0) rectangle (13.5, 5);
\filldraw[fill=black, draw=black] (13.5,0) rectangle (16, 2.5);
\filldraw[fill=black, draw=black] (11,7.5) rectangle (13.5, 10);
\filldraw[fill=black, draw=black] (16,5) rectangle (18.5, 7.5);
\filldraw[fill=black, draw=black] (18.5,0) rectangle (21, 2.5);
\filldraw[fill=black, draw=black] (18.5,7.5) rectangle (21, 10);

\end{tikzpicture}
\caption{Paths from left to right and from left to top exit of $\mathcal{A}_{2,2}$}
\label{fig:A22_paths}
\end{center}
\end{figure}
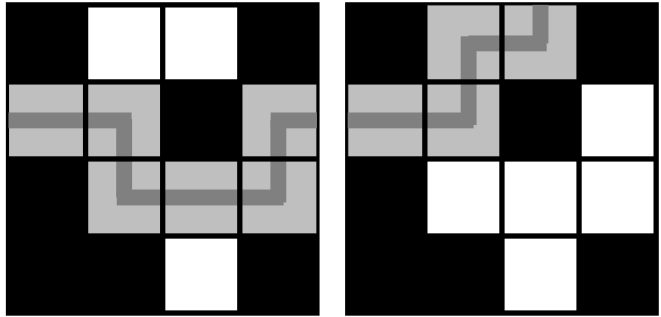

\begin{figure}[h!]
\begin{center}
\begin{tikzpicture}[scale=.3]
\draw[line width=1pt] (0,0) rectangle (16,16);
\filldraw[fill=gray!95, draw= black] (0,0) rectangle (4,4);
\filldraw[fill=gray!95, draw= black] (0,4) rectangle (4,8);
\filldraw[fill=gray!95, draw= black] (4,4) rectangle (8,8);
\filldraw[fill=gray!95, draw= black] (4,8) rectangle (8,12);
\filldraw[fill=gray!95, draw= black] (8,8) rectangle (12,12);
\filldraw[fill=gray!95, draw= black] (12,8) rectangle (16,12);
\filldraw[fill=gray!95, draw= black] (12,4) rectangle (16,8);
\filldraw[fill=gray!50, draw= black] (2,0) rectangle (3,2);
\filldraw[fill=gray!50, draw= black] (3,1) rectangle (4,4);
\filldraw[fill=gray!50, draw= black] (2,3) rectangle (3,6);
\filldraw[fill=gray!50, draw= black] (3,5) rectangle (4,7);
\filldraw[fill=gray!50, draw= black] (4,6) rectangle (5,7);
\filldraw[fill=gray!50, draw= black] (5,6) rectangle (6,8);
\filldraw[fill=gray!50, draw= black] (6,7) rectangle (7,10);
\filldraw[fill=gray!50, draw= black] (7,9) rectangle (8,11);
\filldraw[fill=gray!50, draw= black] (8,10) rectangle (10,11);
\filldraw[fill=gray!50, draw= black] (9,9) rectangle (12,10);
\filldraw[fill=gray!50, draw= black] (11,10) rectangle (14,11);
\filldraw[fill=gray!50, draw= black] (13,9) rectangle (15,10);
\filldraw[fill=gray!50, draw= black] (14,7) rectangle (15,9);
\filldraw[fill=gray!50, draw= black] (15,6) rectangle (16,8);

\draw[line width=0.8pt] (4, 0) -- (4,16);
\draw[line width=0.8pt] (8, 0) -- (8,16);
\draw[line width=0.8pt] (12, 0) -- (12,16);
\draw[line width=0.8pt] (0, 4) -- (16, 4);
\draw[line width=0.8pt] (0, 8) -- (16, 8);
\draw[line width=0.8pt] (0, 12) -- (16,12);
\draw[line width=0.5pt] (1, 0) -- (1,16);
\draw[line width=0.5pt] (2, 0) -- (2,16);
\draw[line width=0.5pt] (3, 0) -- (3,16);
\draw[line width=0.5pt] (5, 0) -- (5,16);
\draw[line width=0.5pt] (6, 0) -- (6,16);
\draw[line width=0.5pt] (7, 0) -- (7,16);
\draw[line width=0.5pt] (9, 0) -- (9,16);
\draw[line width=0.5pt] (10, 0) -- (10,16);
\draw[line width=0.5pt] (11, 0) -- (11,16);
\draw[line width=0.5pt] (13, 0) -- (13,16);
\draw[line width=0.5pt] (14, 0) -- (14,16);
\draw[line width=0.5pt] (15, 0) -- (15,16);
\draw[line width=0.5pt] (0, 1) -- (16,1);
\draw[line width=0.5pt] (0, 2) -- (16,2);
\draw[line width=0.5pt] (0, 3) -- (16,3);
\draw[line width=0.5pt] (0, 5) -- (16,5);
\draw[line width=0.5pt] (0, 6) -- (16,6);
\draw[line width=0.5pt] (0, 7) -- (16,7);
\draw[line width=0.5pt] (0, 9) -- (16,9);
\draw[line width=0.5pt] (0, 10) -- (16,10);
\draw[line width=0.5pt] (0, 11) -- (16,11);
\draw[line width=0.5pt] (0, 13) -- (16,13);
\draw[line width=0.5pt] (0, 14) -- (16,14);
\draw[line width=0.5pt] (0, 15) -- (16,15);
\filldraw[fill=black, draw=black] (0,8) rectangle (4, 12);
\filldraw[fill=black, draw=black] (4,0) rectangle (16, 4);
\filldraw[fill=black, draw=black] (8,4) rectangle (12, 8);
\filldraw[fill=black, draw=black] (8,12) rectangle (16, 16);
\filldraw[fill=black, draw=black] (0,0) rectangle (2, 2);
\filldraw[fill=black, draw=black] (0,3) rectangle (1, 4);
\filldraw[fill=black, draw=black] (2,2) rectangle (3, 3);
\filldraw[fill=black, draw=black] (3,0) rectangle (4, 1);

\filldraw[fill=black, draw=black] (0,4) rectangle (2, 6);
\filldraw[fill=black, draw=black] (0,7) rectangle (1, 8);
\filldraw[fill=black, draw=black] (2,6) rectangle (3, 7);
\filldraw[fill=black, draw=black] (3,4) rectangle (4, 5);
\filldraw[fill=black, draw=black] (4,8) rectangle (6, 10);
\filldraw[fill=black, draw=black] (4,11) rectangle (5, 12);
\filldraw[fill=black, draw=black] (6,10) rectangle (7, 11);
\filldraw[fill=black, draw=black] (7,8) rectangle (8, 9);
\filldraw[fill=black, draw=black] (4,12) rectangle (6, 14);
\filldraw[fill=black, draw=black] (4,15) rectangle (5, 16);
\filldraw[fill=black, draw=black] (6,14) rectangle (7, 15);
\filldraw[fill=black, draw=black] (7,12) rectangle (8, 13);
\filldraw[fill=black, draw=black] (12,4) rectangle (14, 6);
\filldraw[fill=black, draw=black] (12,7) rectangle (13, 8);
\filldraw[fill=black, draw=black] (14,6) rectangle (15, 7);
\filldraw[fill=black, draw=black] (15,4) rectangle (16, 5);
\filldraw[fill=black, draw=black] (0,12) rectangle (1, 14);
\filldraw[fill=black, draw=black] (0,15) rectangle (1, 16);
\filldraw[fill=black, draw=black] (1,12) rectangle (2, 13);
\filldraw[fill=black, draw=black] (2,14) rectangle (3, 15);
\filldraw[fill=black, draw=black] (3,12) rectangle (4, 13);
\filldraw[fill=black, draw=black] (3,15) rectangle (4, 16);
\filldraw[fill=black, draw=black] (4,4) rectangle (5, 6);
\filldraw[fill=black, draw=black] (4,7) rectangle (5,
 8);
\filldraw[fill=black, draw=black] (5,4) rectangle (6, 5);
\filldraw[fill=black, draw=black] (6,6) rectangle (7, 7);
\filldraw[fill=black, draw=black] (7,4) rectangle (8, 5);
\filldraw[fill=black, draw=black] (7,7) rectangle (8, 8);
\filldraw[fill=black, draw=black] (8,8) rectangle (9, 10);
\filldraw[fill=black, draw=black] (8,11) rectangle (9, 12);
\filldraw[fill=black, draw=black] (9,8) rectangle (10, 9);
\filldraw[fill=black, draw=black] (10,10) rectangle (11, 11);
\filldraw[fill=black, draw=black] (11,8) rectangle (12, 9);
\filldraw[fill=black, draw=black] (11,11) rectangle (12, 12);
\filldraw[fill=black, draw=black] (12,8) rectangle (13, 10);
\filldraw[fill=black, draw=black] (12,11) rectangle (13, 12);
\filldraw[fill=black, draw=black] (13, 8) rectangle (14, 9);
\filldraw[fill=black, draw=black] (14,10) rectangle (15, 11);
\filldraw[fill=black, draw=black] (15, 8) rectangle (16, 9);
\filldraw[fill=black, draw=black] (15,11) rectangle (16, 12);

\end{tikzpicture}
\caption{The path (in lighter grey) from the bottom 
to the right exit of the labyrinth set $ \mathcal{W}_2$ shown in Figure \ref{fig:W2} } 
\label{fig:W2_path}
\end{center}
\end{figure}
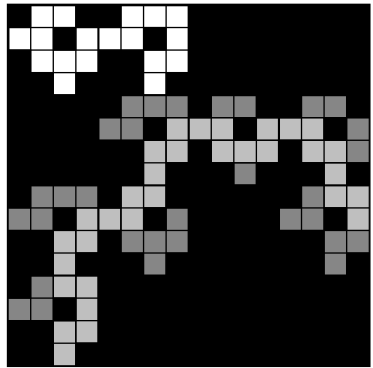

In the following, let us describe the construction of paths that connect exits
of supermixed labyrinth sets of level  $n\ge 1.$ Therefor, let us consider,
e.g., the path between the bottom and the right exit in $W_n$, for some fixed
$n \ge 1.$ For any other pair of distinct exits in $W_n$, the construction
follows the same steps.  Let us assume $n\ge 2,$ since for $n = 1$ the
mentioned path concides with the path from the bottom to the right exit of the
pattern ${\mathcal A}_{1,1}$ and we are done. 

The first step is to find the path between the bottom and the right exit of ${\mathcal W}_1,$ which is equivalent to constructing the path between the bottom and the right exit of ${\mathcal A}_{1,1}$. Now we denote each square in this path according to its neighbours within this path, i.e., we establish its type: if it has a top and a bottom neighbour it is called a $\A$-\emph{square}  
(with respect to the path), and
it is called a $\B,\C,\D,\E$, and $\F$-\emph{square} if its neighbours are at left-right,
top-right, 
bottom-right, left-bottom, and left-top, respectively. 
If a square in the mentioned path is an exit of the pattern (the labyrinth set), it is supposed to have a neighbour outside the 
side of the exit. A bottom exit, e.g., is supposed to have a neighbour below, outside the bottom, additionally to its inside neighbour.
   We repeat this procedure for all possible paths between two exits in ${\cal G}({\cal W}_{1})$, as shown in 
 Figure \ref{fig:Squares of type $A$ and $B$.}, \ref{fig:Squares of type $C$ and $D$.}, and \ref{fig:Squares of type $E$ and $F$.}.

We introduce the notation ${\cal J}=\{\A,\B,\C,\D,\E,\F \}$.

Now, as a next step, in order to construct the $\D$-path in $\mathcal{G}({\cal
W}_{2})$, which is shown in Figure~\ref{fig:W2_path}, we replace each
$j$-square $W$ of the $\D$-path in ${\mathcal G}({\cal W}_{1})$ with the
$j$-path in $\mathcal{G}(\phi_2(W))$, where $j\in  {\cal J}$. 
Some of the paths in the patterns in $\widetilde {\mathcal A}_2$ are shown in 
Figures~\ref{fig:A21_paths} and \ref{fig:A22_paths}.  
 In general, for any pair of exits and $n\ge 1$, we replace each marked
square $W$ in the path in $\mathcal{G}({\cal W}_{n})$ with its corresponding path in the graph of the pattern $\phi_{n+1}(W)$ and thus obtain the path of the supermixed labyrinth set
$\mathcal{G}({\cal W}_{n+1})$.

 \begin{notation*} For any square $W$ in a given path between two exits of a labyrinth pattern or labyrinth set, if $W$ is a square of type $j \in {\cal J}$ within this path, we write $\ty (W)=j.$
\end{notation*}

\vspace{0.2cm}
We recall the definition of path matrices of labyrinth patterns or labyrinth sets of some level. 
\begin{definition}
The path matrix of  a labyrinth pattern ${\mathcal A}$ is 
a non-negative $6 \times 6$ 
matrix whose rows and columns are both indexed according to the set ${\cal J}=\{\A,\B,\C,\D,\E,\F \}$ and whose entry on the position $(i,j)\in {\cal J}\times {\cal J}$ is the number of squares of type $j$ in the path of type $i$ in  ${\mathcal G}({\mathcal A}).$ If ${\mathcal A}={\mathcal W}_n$, then we obtain the path matrix of the labyrinth set  ${\mathcal W}_n$ of level $n\ge 1$.
\end{definition}

For a sequence of collections of labyrinth patterns $\{\widetilde{\mathcal A}_k
\}_{k\ge 1}$, with $\widetilde{\mathcal A}_k=\{{\mathcal A}_{k,1},
\dots,{\mathcal A}_{k,s_k}\}$, for $k\ge 1$ and the corresponding sequence $\{
{\cal W}_n\}_{n\ge 1}$ of supermixed labyrinth sets of level $n$, we introduce
in the following some more notation for matrices that are useful in the study
of paths in supermixed labyrinth sets.  
\begin{notation*}
For all $n\ge 1$ we denote the \emph{path matrix of the labyrinth set} ${\cal W}_n$ by $$M({\cal W}_n):=M(n)=(m^{(n)}_{i,j})_{i,j\in {\cal J}}.$$ 

\noindent For all $k\ge 1$ and $h=1,\dots, s_k$ we denote the \emph{path matrix of the labyrinth pattern} ${\cal A}_{k,h}$ by $$M({\cal A}_{k,h}):=M_{k,h}=(m^{k,h}_{i,j})_{i,j\in {\cal J}}.$$ 
\end{notation*}

\begin{definition}
For $n \ge 1$ and $h\in \{1,\dots, s_{n+1}\}$ the $h$-th \emph{counting matrix for step} $n$, $Q_{n,h}=(q^{n,h}_{i,j})_{i,j \in {\cal J}}$, is defined by
\begin{equation}
\label{eq:defmatrixQ}
q^{n,h}_{i,j}:=\sum_{W \in \p_i({\mathcal W}_n)}\mathbf{1}_{\phi_{n+1}(W)={\mathcal A}_{n+1,h}}\cdot \mathbf{1}_{\ty(W)=j}. 
\end{equation}
In the above formula, $\mathbf{1}$ denotes in each case the corresponding indicator function. In other words, the entry $q^{n,h}_{i,j}$ is the number of $j$-squares in the path of type $i$ in ${\mathcal G}({\mathcal W}_n)$ which at the next step are ``substituted'' according to the pattern ${\cal A}_{n+1,h}$, with $i,j,n,h$ as above.
\end{definition}

The following proposition is an immediate consequence of the definitions of the path
matrices and the construction method given at the beginning of this section.

\begin{proposition}
With the above notation we have

 \begin{equation}\label{eq:matrix_k,h}
 \left(
 \begin{array}{l}
\A_{k,h} \\  
\B_{k,h} \\  
\C_{k,h} \\  
\D_{k,h} \\  
\E_{k,h} \\  
\F_{k,h} \\  
\end{array}
\right)
=M_{k,h} \cdot \left(
\begin{array}{l}
1 \\  
1 \\  
1 \\  
1 \\  
1 \\  
1 \\  
\end{array}\right)
\text{ and }
 \left(
 \begin{array}{l}
\A(n) \\  
\B(n) \\  
\C(n) \\  
\D(n) \\  
\E(n) \\  
\F(n) \\  
\end{array}
 \right)
=M(n) \cdot \left(
\begin{array}{l}
1 \\  
1 \\  
1 \\  
1 \\  
1 \\  
1 \\  
\end{array}\right).
\end{equation}
\end{proposition}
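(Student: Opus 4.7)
The plan is to unwind the definitions on both sides of each identity and observe that the matrix--vector product on the right is just summing each row of the path matrix, which by definition counts the total number of squares along the corresponding path.

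Concretely, I would first treat the pattern case. For fixed $k\ge 1$ and $h\in\{1,\dots,s_k\}$, the $i$-th entry of $M_{k,h}\cdot(1,1,1,1,1,1)^{\!\top}$ equals $\sum_{j\in\mathcal{J}} m^{k,h}_{i,j}$. By the definition of the path matrix, $m^{k,h}_{i,j}$ is the number of squares of type $j$ appearing in the unique path $\p_i(\mathcal{A}_{k,h})$. Hence
\[
\sum_{j\in\mathcal{J}} m^{k,h}_{i,j}\;=\;\sum_{j\in\mathcal{J}}\bigl|\{W\in \p_i(\mathcal{A}_{k,h})\colon \ty(W)=j\}\bigr|.
\]
So the key step is to justify that the sets $\{W\in \p_i(\mathcal{A}_{k,h})\colon \ty(W)=j\}$, as $j$ ranges over $\mathcal{J}=\{\A,\B,\C,\D,\E,\F\}$, form a partition of $\p_i(\mathcal{A}_{k,h})$. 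This is immediate from the definition of $\ty(W)$ given earlier: a square on the path has precisely two neighbours in the path (where an endpoint exit is given its virtual neighbour outside the relevant side), and the six unordered pairs of adjacent sides of a unit square are exactly the six types in $\mathcal{J}$. Thus each square on the path carries exactly one label in $\mathcal{J}$, so the sum telescopes to $|\p_i(\mathcal{A}_{k,h})|$, which by the length convention stated in Section~\ref{sec:path_matrices} is precisely $i_{k,h}$.

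For the second identity, the argument is literally the same with $\mathcal{A}_{k,h}$ replaced by the supermixed labyrinth set $\mathcal{W}_n$: here $\mathcal{W}_n$ is itself a labyrinth pattern (Proposition~\ref{proposition:prop123}), so the path $\p_i(\mathcal{W}_n)$, the typing $\ty$ of its squares, and the path matrix $M(n)$ are all defined exactly as before. Summing the $i$-th row of $M(n)$ therefore gives the total number of squares of level $n$ in $\p_i(\mathcal{W}_n)$, which equals $i(n)$ by definition.

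I do not expect a significant obstacle; the only subtlety is the well-definedness and exhaustiveness of the typing $\ty(W)$ for squares lying on a fixed path, and that was already built into the construction described in Section~\ref{sec:path_matrices} (including the convention that an exit square has a virtual neighbour outside its exit side). Once that is noted, both formulas follow from a one-line computation with the defining entries of the path matrix.
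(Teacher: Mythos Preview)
Your proposal is correct and is essentially the same as the paper's approach: the paper simply states that the proposition is an immediate consequence of the definitions of the path matrices and the construction method, and your argument is precisely the unwinding of those definitions (each square on a path receives exactly one type in $\mathcal{J}$, so the row sum of the path matrix equals the path length).
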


The next result establishes identities that describe the relations between path
matrices of labyrinth patterns, supermixed labyrinth sets of some level and the
counting matrices introduced before. In this theorem we extend 
results obtained for path matrices in the case of self-similar \cite{laby_4x4,
laby_oigemoan} and mixed labyrinth fractals \cite{mixlaby}.
\begin{theorem}
\label{theo:basisformel}
With the above notation we have, for all $n \ge 1,$ 
\begin{equation}
\label{eq:M(n)}
M(n)= Q_{n,1}+\dots+Q_{n,s_{n+1}}
\end{equation}
and
\begin{equation}
\label{eq:M(n+1)}
M(n+1)=\sum_{h=1}^{s_{n+1}} Q_{n,h}\cdot M_{n+1,h}.
\end{equation}
\end{theorem}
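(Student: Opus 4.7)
The plan is to prove both identities by double counting, reading off the entries of the matrices on each side and checking that they count the same objects.

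For the first identity $M(n)=Q_{n,1}+\dots+Q_{n,s_{n+1}}$, I would fix $(i,j)\in\mathcal{J}\times\mathcal{J}$ and unfold the definitions. By definition $m^{(n)}_{i,j}=\sum_{W\in \p_i(\mathcal{W}_n)}\mathbf{1}_{\ty(W)=j}$, and $q^{n,h}_{i,j}=\sum_{W\in \p_i(\mathcal{W}_n)}\mathbf{1}_{\phi_{n+1}(W)=\mathcal{A}_{n+1,h}}\mathbf{1}_{\ty(W)=j}$. Since $\phi_{n+1}$ sends every $W\in\mathcal{W}_n$ to exactly one element of $\widetilde{\mathcal{A}}_{n+1}=\{\mathcal{A}_{n+1,1},\ldots,\mathcal{A}_{n+1,s_{n+1}}\}$, the indicators $\mathbf{1}_{\phi_{n+1}(W)=\mathcal{A}_{n+1,h}}$, $h=1,\ldots,s_{n+1}$, form a partition of unity. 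Summing $q^{n,h}_{i,j}$ over $h$ collapses them and gives $m^{(n)}_{i,j}$. This step is essentially bookkeeping.

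For the second identity $M(n+1)=\sum_{h=1}^{s_{n+1}}Q_{n,h}\cdot M_{n+1,h}$, the key observation is the recursive construction of paths described just before the theorem: $\p_i(\mathcal{W}_{n+1})$ is obtained from $\p_i(\mathcal{W}_n)$ by replacing each square $W$ of type $k=\ty(W)$ by the scaled copy of the $k$-path in $\mathcal{G}(\phi_{n+1}(W))$ placed inside $W$. Hence the level-$(n+1)$ squares appearing in $\p_i(\mathcal{W}_{n+1})$ are precisely the disjoint union, over $W\in\p_i(\mathcal{W}_n)$, of the squares of $\p_{\ty(W)}(\phi_{n+1}(W))$. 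Grouping by $h$ (which pattern $\phi_{n+1}(W)$ equals) and by $k=\ty(W)$, I would write
\[
m^{(n+1)}_{i,j}=\sum_{W\in\p_i(\mathcal{W}_n)}m^{\,n+1,\,h(W)}_{\ty(W),j}=\sum_{h=1}^{s_{n+1}}\sum_{k\in\mathcal{J}}q^{n,h}_{i,k}\cdot m^{n+1,h}_{k,j},
\]
which is the $(i,j)$ entry of the matrix product on the right-hand side.

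The one point requiring care, and which I would expect to be the main (though still modest) obstacle, is to justify that the ``type'' of a level-$(n+1)$ square $U\subseteq W$ measured within the full path $\p_i(\mathcal{W}_{n+1})$ agrees with its type within the local sub-path $\p_{\ty(W)}(\phi_{n+1}(W))$, so that the factor $m^{n+1,h}_{k,j}$ is exactly the number of type-$j$ squares contributed by $W$. For interior squares of $W$ this is immediate. For squares $U$ lying on the boundary of $W$, the type depends on its neighbor in the adjacent parent square $W'$; by the exits consistency from Assumptions~\ref{ass:consistency} together with the convention that exit squares are treated as having a virtual outside neighbor on the exit side, the contributions of $U$ from the $W$-side and from the $W'$-side combine exactly into its type in $\p_i(\mathcal{W}_{n+1})$. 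Once this matching of types is verified, the double-counting identity closes the proof.
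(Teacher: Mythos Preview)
Your proposal is correct and follows essentially the same double-counting argument as the paper: the first identity via the partition-of-unity $\sum_h \mathbf{1}_{\phi_{n+1}(W)=\mathcal{A}_{n+1,h}}=1$, and the second by grouping the level-$(n+1)$ squares in $\p_i(\mathcal{W}_{n+1})$ according to the parent square $W$, its type, and the pattern $\phi_{n+1}(W)$. The paper runs the computation for \eqref{eq:M(n+1)} from the right-hand side toward $m^{(n+1)}_{i,j}$ rather than the other way, and simply appeals to ``the construction methods of paths'' for the last equality; your explicit remark about the boundary type-matching (via the exits consistency and the virtual-neighbor convention) makes that step more transparent, but the underlying argument is the same.
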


\begin{proof} 
 First, we prove the equality \eqref{eq:M(n)}. 

By the definition of the matrices $Q_{n,h}$, $h=1,\dots,s_{n+1},$ we have
\begin{align*}
\sum_{h=1}^{s_{n+1}}q^{n,h}_{i,j}&=\sum_{h=1}^{s_{n+1}} \sum_{W \in \p_i({\cal W}_{n})} \mathbf{1}_{\phi_{n+1}(W)={\cal A}_{n+1,h}}\cdot \mathbf{1}_{\ty(W)=j}\\
&= \sum_{W \in \p_i({\cal W}_{n})}\mathbf{1}_{\ty(W)=j} \sum_{h=1}^{s_{n+1}} \mathbf{1}_{\phi_{n+1}(W)={\cal A}_{n+1,h}}\\
&=\sum_{W \in \p_i({\cal W}_{n})}\mathbf{1}_{\ty(W)=j} =m^{(n)}_{i,j}.
\end{align*}
 Now, in order to prove the formula \eqref{eq:M(n+1)}, let  us start by computing, for $(i,j)\in {\mathcal J}\times {\mathcal J },$ the entry in the row $i$ and column $j$ of the $6 \times 6$ matrix 
$\sum_{h=1}^{s_{n+1}}Q_{n,h}M_{n+1,h} $, i.e. the number 
$$ \sum_{h=1}^{s_{n+1}}\sum_{\nu \in {\mathcal J}}q^{n,h}_{i,\nu}\cdot m^{n+1,h}_{\nu,j}.$$
By  the definition of the counting matrix given in formula \eqref{eq:defmatrixQ} and the definition of $m^{n+1,h}_{\nu,j}$, that  can be expressed by the equation  $$m^{n+1,h}_{\nu,j}=\sum_{\overline{W} \in \p_{\nu} ({\mathcal A}_{n+1,h})} \mathbf{1}_{\ty({\overline W})=j}, \mbox{ for } \nu,j \in {\mathcal J}, n\ge 1, 1\le h \le s_{n+1},$$

\noindent the above double sum equals
$$\sum_{h=1}^{s_{n+1}}\sum_{\nu \in {\mathcal J}}\sum_{W \in \p_i({\mathcal W}_n)}\mathbf{1}_{\phi_{n+1}(W)={\mathcal A}_{n+1,h}}\cdot \mathbf{1}_{\ty(W)=\nu} \sum_{\overline{W} \in \p_{\nu}({\mathcal A}_{n+1,h})} \mathbf{1}_{\ty(\overline{W})=j} .$$

Since, for all $W \in \p_i({\mathcal W}_n)$,
$$\sum_{\nu \in {\mathcal J}}\mathbf{1}_{\ty(W)=\nu}\sum_{\overline{W} \in \p_{\nu}({\mathcal A}_{n+1,h})} \mathbf{1}_{\ty(\overline{W})=j} =m^{n+1,h}_{\ty(W),j},$$
it follows that the above quadruple sum equals 
$$\sum_{h=1}^{s_{n+1}} \sum_{W \in \p_i({\mathcal W}_n)}\mathbf{1}_{\phi_{n+1}(W)={\mathcal A}_{n+1,h}} \cdot m^{n+1,h}_{\ty(W),j} =m^{(n+1)}_{i,j},$$
by the definition of $M(n+1)$ and the construction methods of paths in labyrinth sets of level $n\ge 2.$ 
\end{proof}
\begin{remark}  
For $s_{n+1}=1$, (for some $n\ge 1$), we have $M(n)=Q_{n,1}$ and thus in this
case we recover the formula $M(n+1)=M(n)\cdot M_{n+1}$ that holds for mixed
labyrinth fractals \cite{mixlaby}.  Moreover, let us recall here, that
throughout our consideration we have $s_1=1$, i.e., the first step of the
construction is defined by exactly one pattern.
\end{remark}

From the above theorem we immediately obtain the following recursion equation for the counting matrices.
\begin{corollary}
\label{cor:counting_matrices}
With the above notation,
 we define $Q_{0,1}:= I$, the identity $6\times 6$-matrix, and for all $n\ge 1$ we have 
\begin{equation}
\label{eq:counting_matrices_recursion2}
Q_{n,1}+\dots + Q_{n,s_{n+1}}=Q_{n-1,1}\cdot M_{n,1}+\dots +Q_{n-1, s_{n}}\cdot M_{n,s_{n}}.
\end{equation}
\end{corollary}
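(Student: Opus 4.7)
The plan is to observe that both sides of \eqref{eq:counting_matrices_recursion2} are in fact expressions for the same path matrix $M(n)$, so the corollary follows directly from the two identities already established in Theorem~\ref{theo:basisformel}.

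First I would handle the left-hand side: by applying formula \eqref{eq:M(n)} of Theorem~\ref{theo:basisformel}, we have
\begin{equation*}
Q_{n,1}+\dots +Q_{n,s_{n+1}}=M(n)
\end{equation*}
for every $n\ge 1$. Next I would turn to the right-hand side and split into two cases according to whether $n\ge 2$ or $n=1$. For $n\ge 2$, formula \eqref{eq:M(n+1)} of the theorem, with $n$ replaced by $n-1$, gives
\begin{equation*}
M(n)=\sum_{h=1}^{s_n}Q_{n-1,h}\cdot M_{n,h},
\end{equation*}
which is precisely the right-hand side of \eqref{eq:counting_matrices_recursion2}. Thus both sides equal $M(n)$ and we are done in this case.

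For $n=1$ the formula \eqref{eq:M(n+1)} is not directly applicable, so we must invoke the convention $Q_{0,1}:=I$ together with the assumption $s_1=1$ and the identity $\mathcal W_1=\mathcal A_{1,1}$. Under these, the right-hand side of \eqref{eq:counting_matrices_recursion2} reduces to $Q_{0,1}\cdot M_{1,1}=I\cdot M_{1,1}=M_{1,1}=M(1)$, again matching the left-hand side. I do not expect any serious obstacle here: once one spells out that the convention $Q_{0,1}=I$ is precisely chosen so that \eqref{eq:M(n+1)} extends to $n=0$, the corollary becomes a one-line consequence of Theorem~\ref{theo:basisformel}.
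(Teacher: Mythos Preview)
Your proposal is correct and matches the paper's approach exactly: the paper does not give a separate proof but simply states that the corollary follows immediately from Theorem~\ref{theo:basisformel}, which is precisely the argument you have spelled out (left side equals $M(n)$ by \eqref{eq:M(n)}, right side equals $M(n)$ by \eqref{eq:M(n+1)} shifted, with the base case covered by the convention $Q_{0,1}=I$ and $s_1=1$).
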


\begin{lemma} 
\label{lemma: paths intersection pattern}
Let ${\cal A} \subseteq {\cal S}_m $ be a labyrinth pattern (or ${\cal A}={\mathcal W}_n$ is a labyrinth set of level $n$ ) with exits $W^{\etop}, W^{\ebottom}, W^{\eleft}, W^{\eright}$. Then $p(W^{\etop},W^{\ebottom})\cap p(W^{\eleft},W^{\eright})\ne \emptyset$. \\More precisely, one of the following cases can occur:
\begin{enumerate}
\item[(a)] $p(W^{\etop},W^{\ebottom})\cap p(W^{\eleft},W^{\eright})= \{V_1  \}$, for some $V_1\in {\mathcal V}({\mathcal G}({\mathcal A}))$
\item[(b)] if there exist $V_1, V_2\in {\mathcal V}({\mathcal G}({\mathcal A}))$, $V_1 \ne V_2$, with $p(W^{\etop},W^{\ebottom})\cap p(W^{\eleft},W^{\eright})\supseteq\{V_1,V_2  \}$, then the path $p(V_1,V_2)$ in ${\mathcal G}({\mathcal A})$ satisfies $p(V_1,V_2)\subseteq p(W^{\etop},W^{\ebottom})$ and $p(V_1,V_2)\subseteq p(W^{\eleft},W^{\eright})$.
\end{enumerate}
\end{lemma}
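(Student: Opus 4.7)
My plan is to split the lemma into two pieces. Assertion (b) is purely graph-theoretic and follows from the tree property alone, whereas the non-emptiness asserted by (a) and (b) together is where the planar geometry of the labyrinth pattern must enter.

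First, I dispose of (b). Suppose $V_1, V_2 \in p(W^{\etop},W^{\ebottom}) \cap p(W^{\eleft},W^{\eright})$ with $V_1 \ne V_2$. The subsequence of $p(W^{\etop},W^{\ebottom})$ between $V_1$ and $V_2$ is itself a simple cycle-free path in the tree $\mathcal{G}(\mathcal{A})$ joining $V_1$ and $V_2$; by the uniqueness of cycle-free paths in a tree (recorded in the remark following Proposition \ref{proposition:prop123}), it must coincide with $p(V_1,V_2)$. Hence $p(V_1,V_2) \subseteq p(W^{\etop},W^{\ebottom})$, and the same argument applied to $p(W^{\eleft},W^{\eright})$ gives $p(V_1,V_2) \subseteq p(W^{\eleft},W^{\eright})$.

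For non-emptiness I would use a planar crossing argument. I realise $\mathcal{G}(\mathcal{A})$ as a plane graph by placing every white square $W$ at its centre and drawing each edge as the straight segment between the centres of the two neighbouring squares it joins; such a segment passes through the midpoint of the shared side and lies entirely inside the union of those two squares. Let $\gamma_v$ be the polygonal arc traced out by $p(W^{\etop},W^{\ebottom})$ in this embedding, extended by vertical segments from the centre of $W^{\etop}$ up to the top edge of $[0,1]^2$ and from the centre of $W^{\ebottom}$ down to the bottom edge. Define $\gamma_h$ analogously from $p(W^{\eleft},W^{\eright})$ with horizontal extensions to the left and right edges. Both are Jordan arcs contained in $[0,1]^2$: $\gamma_v$ joins a point of $[0,1]\times\{1\}$ to a point of $[0,1]\times\{0\}$, while $\gamma_h$ joins a point of $\{0\}\times[0,1]$ to a point of $\{1\}\times[0,1]$.

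The crucial step is then the standard rectangle-crossing fact: such $\gamma_v$ and $\gamma_h$ must intersect. One closes $\gamma_v$ into a Jordan curve by adjoining a boundary arc along the left portion of $\partial([0,1]^2)$; the Jordan curve theorem then yields two complementary components of the square, containing respectively the left and right endpoints of $\gamma_h$, so $\gamma_h$ must cross $\gamma_v$. In the chosen embedding, any two distinct edges of $\mathcal{G}(\mathcal{A})$ meet in at most one point, and that point is a common endpoint: parallel segments lie in disjoint $1/m$-wide strips (or on the same line, where they overlap only at an endpoint), while a horizontal and a vertical segment can meet only at the centre of a white square that is an endpoint of both, as a direct coordinate computation shows. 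Consequently every point of $\gamma_v \cap \gamma_h$ is the centre of a white square belonging to both $p(W^{\etop},W^{\ebottom})$ and $p(W^{\eleft},W^{\eright})$, and non-emptiness follows.

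The most delicate step I foresee is the rigorous verification that, when $\gamma_v$ is closed into a Jordan curve by a boundary arc, the two endpoints of $\gamma_h$ indeed fall into different complementary components inside $[0,1]^2$. A purely combinatorial alternative — enumerating the Steiner-tree configurations of $W^{\etop}, W^{\ebottom}, W^{\eleft}, W^{\eright}$ in $\mathcal{G}(\mathcal{A})$ and excluding the single \emph{H-shaped} configuration in which $p(W^{\etop},W^{\ebottom})$ and $p(W^{\eleft},W^{\eright})$ are disjoint, via the exits and corner properties — avoids the planar topology but seems to require a longer case analysis.
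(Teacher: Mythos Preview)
Your argument is correct and follows essentially the same route as the paper's proof sketch: both embed the two exit-to-exit paths as curves in the unit square and invoke a planar crossing result (the paper cites Maehara's Lemma~2, which is exactly the rectangle-crossing fact you derive from the Jordan curve theorem), and both obtain part~(b) from the uniqueness of paths in a tree. You are in fact more explicit than the paper on one point it leaves implicit, namely that in the straight-line embedding any intersection of the two polygonal arcs must occur at a vertex.
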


\begin{proof}[Proof (Sketch).] Let $m$  be the width of ${\cal A}$ and let us
consider the lattice $\mathbb{L}_m:=\{1,\dots,m\}^2$. Then ${\mathcal
G}({\mathcal A})$ induces on $\mathbb{L}_m$ a subgraph $\mathbb{L}({\mathcal
A})$ in a natural way. In order to prove the first assertion of the lemma it is
enough to show that a (cycle-free) path in $\mathbb{L}({\mathcal A})$ that
connects $(x,1)$ with $(x,m)$ intersects any (cycle-free) path $\mathbb{L}_m$
that connects $(1,y)$ with $(m,y)$, where $x,y \in \{1,\dots,m  \}.$ We use
the natural embedding of $\mathbb{L}_m$ and $\mathbb{L}({\mathcal A})$ in the plane, scaled
by factor $1/m$.

From this embedding we obtain two curves in the unit square,
one leading from top to bottom and one leading from left to right,
 corresponding to the two
mentioned paths in the induced graph. By \cite{Maehara84}[Lemma 2], these
two curves intersect.

Since the intersection of the two corresponding paths in ${\mathcal
G}({\mathcal A})$ is non-empty, it can be either a single vertex, which yields
case (a), or contain more than one vertex, which leads to case (b), where the
assertion can be easily obtained by indirect proof, based on the fact that the
graph of a labyrinth pattern (or set) is a tree.  
\end{proof}

The above result easily yields the following.

\begin{lemma}
\label{lemma:sums_of_pathlengths}
In every labyrinth pattern ${\cal A} \subseteq {\cal S}_m $  (or ${\cal A}={\mathcal W}_n$  labyrinth set of level $n$ ) with exits $W^{\etop}, W^{\ebottom}, W^{\eleft}, W^{\eright}$ the lengths of the paths in ${\mathcal G}({\mathcal A})$ between exits satisfy 
\begin{align}
& \ell(p(W^{\etop},W^{\ebottom}))+\ell(p(W^{\eleft},W^{\eright}))= \nonumber \\
\max & \{  \ell(p(W^{\eleft},W^{\ebottom}))+\ell(p(W^{\etop},W^{\eright})), \nonumber \\
   &  \ell(p(W^{\eleft},W^{\etop}))+\ell(p(W^{\ebottom},W^{\eright}))
 \}
\end{align}
\end{lemma}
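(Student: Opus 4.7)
The plan is to combine Lemma \ref{lemma: paths intersection pattern} with the tree structure of $\mathcal{G}(\mathcal{A})$ and to reduce the identity to a short case analysis organised around the common part of the two ``long'' exit paths. Set $P_1 := p(W^{\etop}, W^{\ebottom})$ and $P_2 := p(W^{\eleft}, W^{\eright})$. Part (b) of Lemma \ref{lemma: paths intersection pattern} together with the tree property says that $P_1 \cap P_2$ is a non-empty subpath; let $u$ and $v$ denote its endpoints, allowing $u = v$.

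Next I would orient $P_1$ so that $u$ precedes $v$ when travelling from $W^{\etop}$ to $W^{\ebottom}$. On $P_2$ there are then exactly two possibilities: either $u$ also precedes $v$ on the way from $W^{\eleft}$ to $W^{\eright}$ (Case A), or $v$ precedes $u$ (Case B); the degenerate case $u=v$ (Case C) is a common specialisation. Introduce the non-negative lengths $\alpha := \ell(p(W^{\etop}, u))$, $\beta := \ell(p(v, W^{\ebottom}))$, $\gamma := \ell(p(u,v))$, and $\delta, \epsilon$ for the two arms carrying $W^{\eleft}$ and $W^{\eright}$ (branching off at $u$ or at $v$ according to the case). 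Because $\mathcal{G}(\mathcal{A})$ is a tree, each of the six paths between two distinct exits decomposes uniquely as a concatenation of some of these five segments.

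The bookkeeping step is then short. In every case one obtains
\[
S_1 := \ell(p(W^{\etop}, W^{\ebottom})) + \ell(p(W^{\eleft}, W^{\eright})) = \alpha + \beta + \delta + \epsilon + 2\gamma.
\]
For $S_2 := \ell(p(W^{\etop}, W^{\eleft})) + \ell(p(W^{\ebottom}, W^{\eright}))$ and $S_3 := \ell(p(W^{\etop}, W^{\eright})) + \ell(p(W^{\ebottom}, W^{\eleft}))$ one checks $S_1 = S_3 = S_2 + 2\gamma$ in Case A, $S_1 = S_2 = S_3 + 2\gamma$ in Case B, and $S_1 = S_2 = S_3$ in Case C (where $\gamma = 0$). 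In every instance $S_1 = \max(S_2, S_3)$, which is the identity to be proved.

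The only step I expect to require some care is the arm decomposition: I need that, in Case A say, the $W^{\etop}$--$W^{\eleft}$ path really branches off $P_1$ exactly at $u$, so that its length is $\alpha + \delta$ and it shares no further edge with $p(u,v)$. This follows from uniqueness of paths in a tree together with part (b) of Lemma \ref{lemma: paths intersection pattern}, which pins down $p(u,v)$ as the full common portion of $P_1$ and $P_2$; any additional shared edge would create a cycle in $\mathcal{G}(\mathcal{A})$, contradicting the tree property.
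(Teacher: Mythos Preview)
Your proof is correct and follows exactly the route the paper intends: the paper simply states that the lemma ``easily yields'' from Lemma~\ref{lemma: paths intersection pattern}, and your argument is a clean, explicit realisation of that---using the tree structure to decompose the four exit arms around the common subpath $p(u,v)$ and doing the short case analysis. One cosmetic point: the paper's $\ell$ counts squares (vertices), so your segment lengths are not literally additive under concatenation, but since both sides of the identity involve exactly two path lengths the constant offsets cancel and the computation goes through unchanged.
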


\section{Arcs in supermixed labyrinth fractals}\label{sec:arcs_supermixed}
The following lemma establishes a connection between paths in supermixed labyrinth sets and arcs in labyrinth fractals. It provides tools for the construction and study of arcs in the fractal. Its proof works analogously as in the case of self-similar labyrinth fractals (see, e.g., \cite[Lemma 6]{laby_4x4}) by using a theorem from Kuratowski's book \cite[Theorem 3, par. 47, V, p181]{Kuratowski}.
\begin{lemma}[Arc construction]
\label{lemma:arc_construction}
Let $a,b\in L_{\infty}$, with $a\neq b$. For all $n \ge 1$, there are $W_n(a),W_n(b)\in {\mathcal V}(\mathcal{G}({\cal W}_{n}))$ such that 
\begin{itemize}
\item[(a)]$W_1(a)\supseteq W_2(a)\supseteq\ldots$,
\item[(b)]$W_1(b)\supseteq W_2(b)\supseteq\ldots$,
\item[(c)]$\{a\}=\bigcap_{n=1}^{\infty}W_n(a)$,
\item[(d)]$\{b\}=\bigcap_{n=1}^{\infty}W_n(b)$.
\item[(e)]The set $\bigcap_{n=1}^{\infty}\left(\bigcup_{W\in p_n(W_n(a),W_n(b))} W\right)$ is an arc between $a$ and $b$ in $L_{\infty}$. 
\end{itemize}
\end{lemma}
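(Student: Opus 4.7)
The plan splits naturally into two parts: (i) producing the nested sequences $(W_n(a))_{n\ge 1}$ and $(W_n(b))_{n\ge 1}$ that give (a)--(d), and (ii) identifying the intersection in (e) with the unique arc $a(a,b)$.

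For (i), fix a point $a\in L_\infty$ and, for each $n\ge 1$, consider the set $\mathcal{F}_n(a):=\{W\in\mathcal{W}_n:a\in W\}$. Since $a\in L_n$ this set is non-empty, and it is finite (at most four squares meet at any given point of the plane). These sets are linked by the parent relation: each square in $\mathcal{F}_{n+1}(a)$ is contained in exactly one square of $\mathcal{F}_n(a)$, so the disjoint union $\bigsqcup_{n\ge 1}\mathcal{F}_n(a)$ forms an infinite, finitely branching tree. K\"onig's lemma supplies an infinite branch $W_1(a)\supseteq W_2(a)\supseteq\cdots$ with $a\in W_n(a)$ for every $n$, and because $\mathrm{diam}(W_n(a))=\sqrt{2}/m(n)\to 0$, this branch satisfies (a) and (c). Repeat the construction for $b$ to obtain (b) and (d).

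For (ii), set $\Gamma_n:=\bigcup_{W\in p_n(W_n(a),W_n(b))} W$. The first task is to show $\Gamma_{n+1}\subseteq\Gamma_n$. This generalises the corridor remark stated for exit-to-exit paths: write $p_n(W_n(a),W_n(b))=(V_0,V_1,\dots,V_r)$, where $V_0=W_n(a)$ and $V_r=W_n(b)$; the construction method described before Theorem~\ref{theo:basisformel} realises $p_{n+1}(W_{n+1}(a),W_{n+1}(b))$ by concatenating, inside each $V_i$, a sub-path of $\mathcal{G}(\phi_{n+1}(V_i))$ that joins the two appropriate exit sub-squares of $V_i$ (for $i=0$ and $i=r$ one of the endpoints of the sub-path is $W_{n+1}(a)$ or $W_{n+1}(b)$ instead of an exit sub-square). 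Since each $\phi_{n+1}(V_i)$ is a labyrinth pattern, the tree property forces this sub-path to remain inside $V_i$. Hence every square of $p_{n+1}$ lies inside some $V_i\in p_n$, giving the inclusion. Consequently $K:=\bigcap_{n\ge 1}\Gamma_n$ is a nested intersection of compact connected corridors in the compact Hausdorff space $[0,1]^2$, hence a non-empty continuum; and $a,b\in K\subseteq L_\infty$.

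It remains to verify that $K$ is an arc, which is the main obstacle. Because $L_\infty$ is a dendrite (Theorem~\ref{theorem:dendrite}), it contains a unique arc $a(a,b)$, and since $K$ is a subcontinuum of $L_\infty$ joining $a$ and $b$, we automatically have $a(a,b)\subseteq K$. The non-trivial direction is $K\subseteq a(a,b)$. Following the strategy of \cite[Lemma 6]{laby_4x4}, I would suppose towards contradiction that $x\in K\setminus a(a,b)$ and consider, inside the dendrite $L_\infty$, the unique arc from $x$ to $a(a,b)$; it first meets $a(a,b)$ at a branch point $y$. Choosing $n$ so large that $W_n(x)$, $W_n(y)$ and all ambient level-$n$ squares separate $x$ from $a(a,b)$ on the correct side, the tree property of $\mathcal{G}(\mathcal{W}_n)$ (Proposition~\ref{proposition:prop123}) forces $W_n(x)$ to lie in a branch of the tree that is \emph{not} traversed by $p_n(W_n(a),W_n(b))$, contradicting $x\in\Gamma_n$. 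Combined with the uniqueness of the dendrite arc \cite[Theorem~3, par.~47, V, p.~181]{Kuratowski}, this yields $K=a(a,b)$, establishing (e).

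The delicate point throughout is the final identification $K=a(a,b)$, but the supermixed setting introduces no obstruction beyond the self-similar case: Proposition~\ref{proposition:prop123} tells us that every $\mathcal{W}_n$ is itself a labyrinth pattern, so all inductive arguments about corridors and paths transfer verbatim from \cite[Lemma 6]{laby_4x4}.
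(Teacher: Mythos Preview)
Your proposal is correct and follows essentially the same approach as the paper: the paper does not give an independent proof but simply states that the argument from \cite[Lemma 6]{laby_4x4}, together with Kuratowski's uniqueness-of-arcs theorem for dendrites, carries over unchanged because every $\mathcal{W}_n$ is itself a labyrinth pattern (Proposition~\ref{proposition:prop123}). You have in fact spelled out considerably more detail than the paper---the K\"onig's-lemma selection of nested squares, the corridor nesting $\Gamma_{n+1}\subseteq\Gamma_n$, and the continuum argument---while ultimately deferring the last identification $K=a(a,b)$ to the same source the paper cites.
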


\vspace{0.2cm}

The proof of the following two propositions works analogously to the case of mixed labyrinth fractals \cite{mixlaby}.

\begin{proposition}\label{lemma:length_of_parametrisation} Let $n,k\ge 1$, $\{W_1,\ldots,W_k\}$ be a (shortest) path between the exits $W_1$ and $W_k$ in 
$\mathcal{G}({\cal W}_{n})$,  $K_0=W_1 \cap \fr([0,1]\times[0,1])$, $K_k=W_k \cap \fr([0,1]\times[0,1])$, and $c$ be a curve in $L_n$ 
from a point of $K_0$ to a point of $K_k$. The length of any parametrisation of $c$ is at least $(k-1)/(2\cdot m(n))$.
\end{proposition}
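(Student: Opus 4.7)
The plan is to identify a sequence of checkpoints along $c$, corresponding to the shared sides of consecutive path squares which $c$ must cross, and then bound the total length by summing Euclidean distances between non-adjacent checkpoints.

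First I would observe that, thanks to the tree property of $\mathcal{G}(\mathcal W_n)$ established in Proposition~\ref{proposition:prop123}, removing the closed segment $S_i := W_i \cap W_{i+1}$ from $L_n$ disconnects it into exactly two components: $A_i$ containing $W_1,\ldots,W_i$ and $B_i$ containing $W_{i+1},\ldots,W_k$. The reason is that if two white squares diagonally opposite to $W_i, W_{i+1}$ at either endpoint of $S_i$ were both present in $\mathcal W_n$, they would create a $4$-cycle, contradicting the tree property. Parametrising $c$ by arc length as $c:[0,T]\to L_n$ (it suffices to bound $T$, since any other parametrisation has length at least $T$), I would set $T_0:=0$, $T_k:=T$, and $T_i:=\inf\{t\in[0,T]:c(t)\in B_i\}$ for $1\le i\le k-1$. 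Since $B_{i+1}\subseteq B_i$, these satisfy $T_0\le T_1\le\ldots\le T_k$, and by continuity $c(T_i)\in S_i$. Writing $e_0:=K_0$, $e_k:=K_k$, and $e_i:=S_i$ for $1\le i\le k-1$, we obtain checkpoints $c(T_0),c(T_1),\ldots,c(T_k)$ that $c$ visits in this order, with $c(T_i)\in e_i$.

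The key geometric ingredient is the inequality $d(e_i,e_{i+2})\ge 1/m(n)$ in the Euclidean plane, for every $0\le i\le k-2$. The segments $e_i$ and $e_{i+2}$ are two distinct sides (different from $e_{i+1}$) of the $2\times 1$ rectangle $W_{i+1}\cup W_{i+2}$ whose short side has length $1/m(n)$. A straightforward enumeration shows that only the two ``U-turn'' configurations, in which $e_i$ and $e_{i+2}$ lie on the same long side of the rectangle, give distance $0$; all other valid configurations give distance at least $1/m(n)$. For interior indices a U-turn would make $W_i$ and $W_{i+3}$ share a side and thus produce the $4$-cycle $W_i$--$W_{i+1}$--$W_{i+2}$--$W_{i+3}$--$W_i$ in $\mathcal{G}(\mathcal W_n)$, contradicting Proposition~\ref{proposition:prop123}. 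For the boundary indices (when $i=0$ or when $k$ is even and $i=k-2$), a U-turn would force an interior shared side of the path to coincide with part of $\fr([0,1]\times[0,1])$, pushing the next path square outside $[0,1]\times[0,1]$: impossible.

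The conclusion is a summation using the non-overlapping pairing $(T_0,T_2),(T_2,T_4),\ldots$. For each pair one has
$T_{2j+2}-T_{2j}\ge \|c(T_{2j+2})-c(T_{2j})\|\ge d(e_{2j},e_{2j+2})\ge 1/m(n)$,
and after $\lfloor k/2\rfloor$ disjoint pairs the sum is bounded above by $T$. Hence $T\ge \lfloor k/2\rfloor/m(n)\ge (k-1)/(2m(n))$, which is the desired bound. The main obstacle throughout is the exclusion of U-turns in the middle step, which is the place where the tree property of $\mathcal{G}(\mathcal W_n)$ plays an indispensable role; the first and last steps are then essentially bookkeeping.
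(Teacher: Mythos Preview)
The paper does not give a proof of its own here; it only remarks that the argument is analogous to the mixed case in \cite{mixlaby}. So the task reduces to whether your argument stands on its own.

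There is a genuine gap in your first step. You claim that removing $S_i=W_i\cap W_{i+1}$ disconnects $L_n$, and you justify this only by ruling out the $4$-cycle that would come from the two squares diagonally adjacent to $W_i,W_{i+1}$ at either endpoint of $S_i$. That is not sufficient: the tree property of $\mathcal G(\mathcal W_n)$ forbids \emph{side}-adjacencies between an $A_i$-square and a $B_i$-square other than $\{W_i,W_{i+1}\}$, but it does \emph{not} forbid \emph{corner}-adjacencies between such squares far from $S_i$. If some $W$ on the $A_i$-side and $W'$ on the $B_i$-side meet only at a grid vertex $p$ with the two remaining squares at $p$ black, then $L_n\setminus S_i$ is still connected through $p$, your infimum $T_i$ may be attained with $c(T_i)=p\notin S_i$, and the subsequent distance estimates between the $e_j$'s are no longer grounded.

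This situation does occur in genuine labyrinth patterns. Take the $5\times5$ pattern whose white squares (in grid coordinates $(i,j)$) are
\[
(0,0),(0,1),(0,2),(1,0),(1,3),(2,0),(2,1),(2,2),(2,3),(2,4),(3,1),(4,1).
\]
One checks directly the tree, exits and corner properties. The left-to-right exit path is $(0,1)$--$(0,0)$--$(1,0)$--$(2,0)$--$(2,1)$--$(3,1)$--$(4,1)$. Removing the side between $(0,0)$ and $(1,0)$ puts $(0,2)$ on the $A$-side and $(1,3)$ on the $B$-side, and these two white squares meet at the grid vertex $(1,3)$, which does not lie on the removed side. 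Hence $L_n\setminus S_2$ is connected, contrary to your claim.

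Your overall strategy---checkpoints and an alternating sum of distance bounds---is reasonable and can likely be repaired, for instance by letting the $i$-th checkpoint lie in the full intersection $\big(\bigcup A_i\big)\cap\big(\bigcup B_i\big)$, which is $S_i$ together with finitely many isolated grid vertices, and then establishing the $1/m(n)$ separation also for such corner checkpoints. But that additional analysis is precisely what is missing, so as written the proof is incomplete.
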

\begin{proposition}\label{lemma:ArcSimilarity1} Let $e_1,e_2$ be two exits in $L_{\infty}$, and $W_n(e_1), 
W_n(e_2)$ be the exits in $\mathcal{G}({\cal W}_{n})$ of the same type as $e_1$ and $e_2$, respectively, 
for some $n\ge 1.$
If $a$ is the arc that connects $e_1$ and $e_2$ in $L_{\infty}$, $p$ is the path in 
$\mathcal{G}({\cal W}_{n})$ from $W_n(e_1)$ to $W_n(e_2)$, and $W\in {\cal W}_{n}$ is a $\A$-square 
with respect to $p$, then $W\cap a$ is an arc in $L_{\infty}$ between the top and the bottom exit of $W$. 
If $W$ is an other type of square, the corresponding analogous statement 
holds.
\end{proposition}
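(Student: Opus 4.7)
The plan is to combine the arc construction Lemma \ref{lemma:arc_construction} with the recursive path construction described just before Theorem \ref{theo:basisformel}. The overall strategy is to realise the arc $a$ as a nested intersection of corridors along paths, then show that this representation restricts cleanly to $W$ and identifies $W \cap a$ with the analogous nested intersection inside $W$, which in turn is an arc between the correct exits of $W$.

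First I would choose a distinguished sequence of nested squares realising $a$. Since $e_1, e_2$ are exits of $L_\infty$, I can take $W_m(e_j)$ to be the exit of $\mathcal{W}_m$ of the same type as $e_j$ for every $m \ge 1$; these sequences satisfy the hypotheses of Lemma \ref{lemma:arc_construction}, because by the definition of the exits of $L_\infty$ in Section \ref{sec:exits} the nested intersections of same-type exits collapse to the corresponding exit point. Lemma \ref{lemma:arc_construction} then yields
\[
a = \bigcap_{m \ge 1} \bigcup_{W' \in p_m(W_m(e_1), W_m(e_2))} W'.
\]

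Next I would track the recursive refinement of $p_n$ inside the fixed $\A$-square $W$. By the construction method of Section \ref{sec:path_matrices}, at each step every $\A$-square is replaced by the $\A$-path of its assigned pattern, which runs from the top exit to the bottom exit of that square. Iterating from level $n$ up to any $m \ge n$, the subfamily $p_m^W := \{ W' \in p_m : W' \subseteq W \}$ is exactly the path from the top exit to the bottom exit of $W$ in $\mathcal{G}(\mathcal{W}_m \cap W)$, while every other square of $p_m$ lies inside a level-$n$ square different from $W$ and has interior disjoint from $W$. This gives
\[
W \cap a = \bigcap_{m \ge n} \bigcup_{W' \in p_m^W} W'.
\]
The right-hand side I would then identify as an arc by applying Lemma \ref{lemma:arc_construction} once more, this time inside the scaled supermixed labyrinth fractal $L_\infty \cap W$ (generated by the tail sequence $(\widetilde{\mathcal{A}}_k)_{k \ge n+1}$ together with the induced choice functions), whose top and bottom exits coincide with those of $W$. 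The analogous argument for $\B,\C,\D,\E,\F$-squares is identical with the appropriate pair of exits in place of top and bottom.

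The main obstacle is the middle step: making rigorous the two claims that $p_m^W$ really is the (unique) path from the top exit to the bottom exit of $W$ inside $\mathcal{G}(\mathcal{W}_m \cap W)$, and that squares of $p_m$ lying outside $W$ cannot contribute to $W \cap a$ at deeper levels. Both rely on the tree property of $\mathcal{G}(\mathcal{W}_m)$ from Proposition \ref{proposition:prop123}, which forces uniqueness of refinement paths, and on the exits-consistency clause of Assumptions \ref{ass:consistency}, which ensures that the top and bottom exits of $W$ at every refinement level form nested sequences collapsing to well-defined exit points of $L_\infty \cap W$, so that Lemma \ref{lemma:arc_construction} can be applied inside $W$ as in the global case.
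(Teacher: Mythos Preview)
Your proposal is correct and follows the natural approach; the paper itself does not give an explicit proof but states that it ``works analogously to the case of mixed labyrinth fractals \cite{mixlaby}'', and your argument via Lemma~\ref{lemma:arc_construction} together with the recursive path construction is precisely that analogy spelled out. One small point worth tightening in your write-up: in the displayed identity $W\cap a=\bigcap_{m\ge n}\bigcup_{W'\in p_m^W}W'$ you note that squares of $p_m$ outside $W$ have interior disjoint from $W$, but their closures can still meet $W$ along its top or bottom edge; the equality holds because these boundary contributions shrink, as $m\to\infty$, to the top and bottom exit points of $W$, which already lie in the right-hand side.
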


Based on an alternative definiton of the box counting 
dimension 
\cite[Definition 1.3.]{Falconer_book} and making use of the property that 
one can use, instead of $\delta \to 0$, appropriate sequences $(\delta_k)_{k\ge 0}$ 
for the computation of the box counting dimension (see the above reference), one can show that the following result also holds in the context of supermixed labyrinth fractals.

\begin{proposition}\label{prop:Boxcounting} If $a$ is an arc between the top and the bottom exit in the supermixed labyrinth fractal $L_{\infty}$ then 
\[
\liminf_{n\rightarrow \infty} \frac{\log(\A(n))}{\sum_{k=1}^{n}\log (m_k)}= \underline{\dim}_B(a)\le\overline{\dim}_B(a)= \limsup_{n\rightarrow \infty} \frac{\log(\A(n))}{\sum_{k=1}^{n}\log (m_k)}.
\]

For the other pairs of exits, the analogous statement holds. 
\end{proposition}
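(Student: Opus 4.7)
The plan is to identify, up to a bounded multiplicative factor, the $\delta_n$-box count of $a$ with $\A(n)$ for the scale sequence $\delta_n:=1/m(n)$, and then invoke the sequence version of the box counting dimension from \cite{Falconer_book}.

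First I would establish an upper bound on coverings. Applying Lemma~\ref{lemma:arc_construction} to the pair $a=$ top exit and $b=$ bottom exit of $L_\infty$, one may take $W_n(a)=W_n^{\etop}$ and $W_n(b)=W_n^{\ebottom}$, so the path $p_n(W_n(a),W_n(b))$ is precisely the $\A$-path in $\mathcal{G}(\mathcal{W}_n)$, of length $\A(n)$. Hence the arc is contained in the union of $\A(n)$ level-$n$ white squares, each of side $\delta_n$, which gives $N_{\delta_n}(a)\le \A(n)$.

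For the matching lower bound, I would use Proposition~\ref{lemma:ArcSimilarity1}: for every $\A$-square $W$ of the path, $W\cap a$ is an arc between the top and bottom exits of $W$, and therefore has diameter at least $\delta_n$. Counting on the level-$n$ mesh of axis-aligned squares of side $\delta_n$, exactly the $\A(n)$ squares of the $\A$-path meet $a$ (no others can, by the tree property of $\mathcal{G}(\mathcal{W}_n)$ and the uniqueness of the arc between prescribed exits). Since mesh counts and arbitrary $\delta$-covering counts differ by at most a factor depending only on the ambient dimension, there is a constant $c>0$ with $N_{\delta_n}(a)\ge c\,\A(n)$.

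Taking logarithms, dividing by $-\log\delta_n=\sum_{k=1}^n\log m_k$, and noting that $\sum_{k=1}^n\log m_k\to\infty$ so the additive constants are absorbed, I would obtain
\[
\liminf_{n\to\infty}\frac{\log N_{\delta_n}(a)}{-\log\delta_n}=\liminf_{n\to\infty}\frac{\log \A(n)}{\sum_{k=1}^n \log m_k},
\]
and analogously for the limsup. To identify the left-hand sides with $\underline{\dim}_B(a)$ and $\overline{\dim}_B(a)$, I would invoke the characterisation from \cite[Def.~1.3 and following]{Falconer_book} by admissible scale sequences: exploiting the monotonicity of $N_\delta$ in $\delta$, one sandwiches $N_\delta(a)$ between $N_{\delta_n}(a)$ and $N_{\delta_{n+1}}(a)$ for $\delta\in[\delta_{n+1},\delta_n]$, which reduces the continuous liminf/limsup to that along $\delta_n$. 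The main obstacle is precisely this final sequence-to-continuum passage, since the ratios $\log\delta_{n+1}/\log\delta_n$ need not tend to $1$ when the widths $(m_k)$ are allowed to grow arbitrarily; this is the same delicate point already addressed in the mixed case \cite{mixlaby}, and the argument there transfers verbatim once the mesh count at scale $\delta_n$ has been identified with $\A(n)$.
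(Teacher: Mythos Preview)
Your proposal is correct and follows essentially the same approach the paper sketches: identify the mesh count at scale $\delta_n=1/m(n)$ with $\A(n)$ (using Lemma~\ref{lemma:arc_construction} for the upper bound and Proposition~\ref{lemma:ArcSimilarity1} for the lower bound), then invoke the sequence formulation of box counting dimension from \cite{Falconer_book}. The paper does not give a proof at all, only the one-sentence indication that it follows from Falconer's alternative definition and the admissible-sequence criterion; you have supplied the details, including the correct identification of the sequence-to-continuum passage as the only nontrivial step and the observation that it is handled exactly as in the mixed case \cite{mixlaby}.
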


\begin{lemma}\label{lemma:arcs intersect}
Let $L_{\infty}$ be a supermixed labyrinth fractal with the top, bottom, left and right exit denoted by $t_{\infty}, b_{\infty}, l_{\infty}, r_{\infty}$, respectively. Then the arcs  $a(t_{\infty}, b_{\infty})$ and  $a(l_{\infty}, r_{\infty})$ in $L_{\infty}$ have non-empty intersection.
\end{lemma}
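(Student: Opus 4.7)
The plan is to intersect the two arcs one level at a time by producing, at each level $n$, a common white square on the relevant paths in $\mathcal{G}(\mathcal{W}_n)$, and then to pass to the limit by compactness.

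Let $W_n^{\etop},W_n^{\ebottom},W_n^{\eleft},W_n^{\eright}$ denote the four exits of $\mathcal{W}_n$. By definition, $t_\infty=\bigcap_{n\ge 1}W_n^{\etop}$, and analogously for the other three exits, so the nested sequences $(W_n^{\etop})_n,(W_n^{\ebottom})_n,(W_n^{\eleft})_n,(W_n^{\eright})_n$ are exactly the ones produced by Lemma~\ref{lemma:arc_construction} for the pairs of endpoints $(t_\infty,b_\infty)$ and $(l_\infty,r_\infty)$. Writing
\[
C_n^{\mathrm{TB}}:=\bigcup_{W\in p_n(W_n^{\etop},W_n^{\ebottom})} W,\qquad C_n^{\mathrm{LR}}:=\bigcup_{W\in p_n(W_n^{\eleft},W_n^{\eright})} W,
\]
Lemma~\ref{lemma:arc_construction}(e) then gives $a(t_\infty,b_\infty)=\bigcap_{n\ge 1}C_n^{\mathrm{TB}}$ and $a(l_\infty,r_\infty)=\bigcap_{n\ge 1}C_n^{\mathrm{LR}}$.

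Next, I would apply Lemma~\ref{lemma: paths intersection pattern} to the labyrinth set $\mathcal{W}_n$ (which is a labyrinth pattern by Proposition~\ref{proposition:prop123}) with the four exits $W_n^{\etop},W_n^{\ebottom},W_n^{\eleft},W_n^{\eright}$. The lemma guarantees that $p_n(W_n^{\etop},W_n^{\ebottom})\cap p_n(W_n^{\eleft},W_n^{\eright})\neq\emptyset$, so there exists at least one vertex $V_n\in\mathcal{V}(\mathcal{G}(\mathcal{W}_n))$ common to both paths. As a subset of the plane, $V_n$ is a closed square of side $1/m(n)$, and by construction $V_n\subseteq C_n^{\mathrm{TB}}\cap C_n^{\mathrm{LR}}$. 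In particular, $K_n:=C_n^{\mathrm{TB}}\cap C_n^{\mathrm{LR}}$ is a non-empty compact subset of $[0,1]\times[0,1]$.

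Finally, by the corridor containment property recorded in the remark preceding the path-construction section, $C_{n+1}^{\mathrm{TB}}\subseteq C_n^{\mathrm{TB}}$ and $C_{n+1}^{\mathrm{LR}}\subseteq C_n^{\mathrm{LR}}$, so $(K_n)_{n\ge 1}$ is a decreasing sequence of non-empty compact sets. By Cantor's intersection theorem,
\[
a(t_\infty,b_\infty)\cap a(l_\infty,r_\infty)=\bigcap_{n\ge 1}C_n^{\mathrm{TB}}\cap\bigcap_{n\ge 1}C_n^{\mathrm{LR}}=\bigcap_{n\ge 1}K_n\neq\emptyset,
\]
which proves the lemma. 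The only subtle point is the identification of the two arcs with the nested intersections of corridors; once Lemma~\ref{lemma:arc_construction} is invoked with the sequences of exit squares, the argument reduces to a standard compactness exercise combined with the combinatorial intersection provided by Lemma~\ref{lemma: paths intersection pattern}.
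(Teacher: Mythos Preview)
Your proof is correct but follows a different route from the paper's. The paper dispatches the lemma in one line by applying Maehara's crossing lemma \cite{Maehara84}[Lemma~2] directly to the two arcs in the unit square: one arc runs from the top side to the bottom side, the other from the left side to the right side, so they must meet. Your argument instead works level by level, invoking Lemma~\ref{lemma: paths intersection pattern} to get a common square $V_n$ on the two exit paths in $\mathcal{G}(\mathcal{W}_n)$, and then passes to the limit via Cantor's intersection theorem using the corridor nestedness from the remark in Section~\ref{sec:path_matrices} and the arc representation from Lemma~\ref{lemma:arc_construction}.

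Both approaches ultimately rest on the same topological fact (the paper's proof of Lemma~\ref{lemma: paths intersection pattern} itself appeals to Maehara's lemma), so your route is a discretise-then-limit version of the paper's one-shot application. What your approach buys is that it stays entirely within the machinery already developed in the paper and makes the link between the graph-theoretic paths and the fractal arcs explicit; what the paper's approach buys is brevity. One small point worth tightening: Lemma~\ref{lemma:arc_construction} is stated as an existence result, so strictly speaking you should note that the exit-square sequences $(W_n^{\etop})_n$, etc., satisfy conditions (a)--(d) and hence, by uniqueness of arcs in a dendrite, the intersection of corridors in (e) must coincide with $a(t_\infty,b_\infty)$ (respectively $a(l_\infty,r_\infty)$). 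With that remark added, the argument is complete.
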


\begin{proof}[Proof (idea).] One easy way to prove this is to apply \cite{Maehara84}[Lemma 2] to the arcs connecting exits of the supermixed labyrinth fractal that lie on opposite sides of the unit square.
\end{proof}
With the above notations, we have:
\begin{lemma} \label{lemma:arcs intersection}
$a(t_{\infty}, b_{\infty})\cap a(l_{\infty}, r_{\infty})$ is either a point or a common subarc of $a(t_{\infty}, b_{\infty})$ and  $a(l_{\infty}, r_{\infty})$ in the supermixed labyrinth fractal $L_{\infty}$.
\end{lemma}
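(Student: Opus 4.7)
The plan is to use the dendrite structure of $L_\infty$ (established in Theorem~\ref{theorem:dendrite}) together with the uniqueness of arcs in a dendrite. By Lemma~\ref{lemma:arcs intersect} we already know that the intersection is non-empty, so the work reduces to showing that $I := a(t_{\infty}, b_{\infty}) \cap a(l_{\infty}, r_{\infty})$ is a connected closed subset of each arc; a standard fact about arcs in a Hausdorff space then forces such a subset to be either a single point or a subarc.

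First I would observe that $I$ is closed, being the intersection of two compact (hence closed) sets in $L_\infty$. Next I would argue that $I$ is connected by the following dendrite argument: suppose $p_1, p_2 \in I$ are two distinct points. Since $L_\infty$ is a dendrite, there is a unique arc between any two of its points, namely ${{a}}(p_1, p_2)$. Both $a(t_\infty,b_\infty)$ and $a(l_\infty,r_\infty)$ are arcs containing $p_1$ and $p_2$, so the subarc of each joining $p_1$ to $p_2$ must coincide with ${{a}}(p_1,p_2)$. Hence this common subarc is contained in $I$, which proves that $I$ is arcwise connected, in particular connected.

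Having established that $I$ is a closed connected subset of the arc $a(t_\infty,b_\infty)$, I would invoke the well-known topological fact that a connected closed subset of an arc (i.e.\ of a homeomorphic image of $[0,1]$) is either a single point or a closed subarc. The same statement holds with respect to $a(l_\infty,r_\infty)$, so $I$ is simultaneously a subarc of both arcs, i.e.\ a common subarc, or reduces to a single point. This yields the two alternatives in the statement.

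The main (and only genuine) obstacle is the connectedness step; once it is in place the rest is essentially bookkeeping. The delicate point is that one must use the uniqueness of arcs in the dendrite $L_\infty$ to bridge between the two given arcs, rather than trying to argue locally in the plane — local arguments fail because the two arcs may touch and separate in a set-theoretically complicated way, but in a dendrite every such ``touch'' is forced to extend to a full subarc by the uniqueness property. Noting that the analogous dendrite argument is standard and is the same one used implicitly in the mixed case \cite{cristealeobacher_arcs}, I expect no further technical difficulties specific to the supermixed setting.
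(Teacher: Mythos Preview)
Your proposal is correct and follows essentially the same approach as the paper: both argue the general dendrite fact that any two points in the intersection are joined by the unique arc $a(p_1,p_2)$, which must then lie in both given arcs by uniqueness. Your version is in fact slightly more complete, since you explicitly note that $I$ is closed and invoke the characterisation of closed connected subsets of an arc, whereas the paper's sketch stops after establishing that $a(x,y)\subseteq a_1\cap a_2$.
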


\noindent \begin{proof} We sketch the proof of a more general version of the lemma. We show that if $a_1, a_2$ are arcs in a dendrite $D$ and $a_1 \cap a_2 \ne \emptyset $ then $a_1 \cap a_2 $ is a point or a (proper) arc such that $a \subseteq a_1$ and $a \subseteq a_2$. 

 Since $a_1 \cap a_2 \ne \emptyset$, there exists an $x\in D$ such that $x\in a_1 \cap a_2 $. Assume now, there exists $y\in D,$  $y \ne x,$ such that $y\in a_1 \cap a_2 $. This means that there exist  arcs, $a_1(x,y) \subseteq a_1$ and $a_2(x,y) \subseteq a_2$, that connect $x$ and $y$ in the dendrite. Since the arc between any two distinct points in a dendrite is unique \cite[Theorem~3, par. 47, V, p. 181]{Kuratowski}, we must have $a_1(x,y)=a_2(x,y)= a(x,y)$, $a(x,y)\subseteq a_1$, $a(x,y)\subseteq a_2.$

\end{proof}

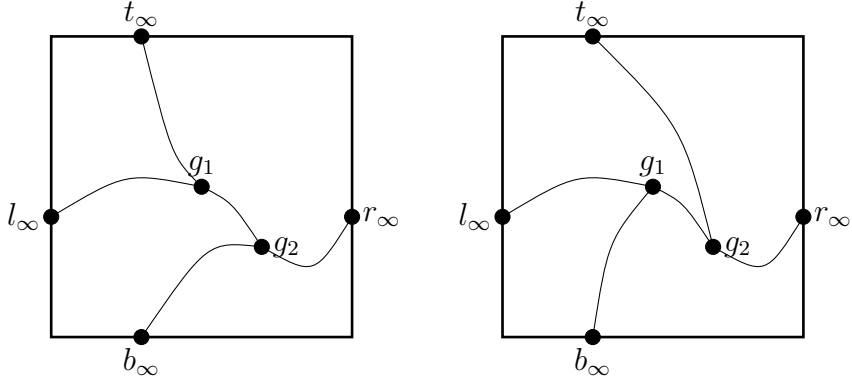
\begin{figure}[h!]
\begin{center}
\begin{tikzpicture}[scale=.40]

\draw[line width=1pt] (0,0) rectangle (10,10);
\draw[fill] (0,4) circle [radius=0.25];
\draw[fill] (10,4) circle [radius=0.25];
\draw[fill] (5,5) circle [radius=0.25];
\draw[fill] (7,3) circle [radius=0.25];
\draw[fill] (3,0) circle [radius=0.25];
\draw[fill] (3,10) circle [radius=0.25];
\coordinate[label=left:$l_{\infty}$] (linf1) at (0,4);
\coordinate[label=above:$t_{\infty}$] (tinf1) at (3,10);
\coordinate[label=below:$b_{\infty}$] (binf1) at (3,0);
\coordinate[label=right:$r_{\infty}$] (rinf1) at (10,4);
\coordinate[label=above:$g_1$] (g11) at (5,5);
\coordinate[label=right:$g_2$] (g21) at (7,3);

\path[draw](0,4) .. controls (2.5,5.5) .. (5,5);
\path[draw](5,5) .. controls (6,4.5) .. (7,3);
\path[draw](7,3) .. controls (8.75,2) .. (10,4);
\path[draw](3,10) .. controls (4,6) .. (5,5);
\path[draw](3,0) .. controls (5.2,3.2) .. (7,3);

\draw[line width=1pt] (15,0) rectangle (25,10);
\draw[fill] (15,4) circle [radius=0.25];
\draw[fill] (25,4) circle [radius=0.25];
\draw[fill] (20,5) circle [radius=0.25];
\draw[fill] (22,3) circle [radius=0.25];
\draw[fill] (18,0) circle [radius=0.25];
\draw[fill] (18,10) circle [radius=0.25];
\coordinate[label=left:$l_{\infty}$] (linf2) at (15,4);
\coordinate[label=right:$r_{\infty}$] (rinf2) at (25,4);
\coordinate[label=above:$t_{\infty}$] (tinf2) at (18,10);
\coordinate[label=below:$b_{\infty}$] (binf2) at (18,0);
\coordinate[label=above:$g_1$] (g12) at (20,5);
\coordinate[label=right:$g_2$] (g22) at (22,3);

\path[draw](15,4) .. controls (17.5,5.5) .. (20,5);
\path[draw](20,5) .. controls (21,4.5) .. (22,3);
\path[draw](22,3) .. controls (23.75,2) .. (25,4);
\path[draw](18,10) .. controls (21,7) .. (22,3);
\path[draw](18,0) .. controls (18.5,3.2) .. (20,5);

\end{tikzpicture}

\caption{Relative positions of the exits in $L_{\infty}$ and the points $g_1,g_2$: the two cases described in Lemma \ref{lemma:configurations}  }
\label{fig:configurations}
\end{center}
\end{figure}

The above lemma, and simple combinatorial arguments lead to the following result (see also Figure \ref{fig:configurations}).

\begin{lemma}\label{lemma:configurations}
 Let $t_{\infty}, b_{\infty}, l_{\infty}, r_{\infty}$ be the exits of a supermixed labyrinth fractal $L_{\infty}$. If there exist $g_1, g_2$  in $L_{\infty}$ such that $g_1 \ne g_2$ and $g_1,g_2 \in a(t_{\infty}, b_{\infty})\cap a(l_{\infty}, r_{\infty})$, then, the following positions of the points $g_1$ and $g_2$ with respect to the four exits are possible (up to symmetry):
\begin{enumerate}
\item[(a)] $g_1$ separates the points $t_{\infty}$ and $g_2$ on the arc $a(t_{\infty}, b_{\infty})$ and $g_1$ separates the points $l_{\infty}$ and $g_2$ on the arc $a(l_{\infty}, r_{\infty})$.
\item[(b)] $g_2$ separates the points $t_{\infty}$ and $g_1$ on the arc $a(t_{\infty}, b_{\infty})$ and $g_1$ separates the points $l_{\infty}$ and $g_2$ on the arc $a(l_{\infty}, r_{\infty})$.
\end{enumerate}
\end{lemma}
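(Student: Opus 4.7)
The plan is to combine Lemma \ref{lemma:arcs intersection} with a short combinatorial case analysis. First, I would note that since $g_1 \ne g_2$ and both belong to $a(t_\infty, b_\infty) \cap a(l_\infty, r_\infty)$, the intersection of the two arcs is not a single point; hence by Lemma \ref{lemma:arcs intersection} it must be a common subarc. In particular, the unique arc $a(g_1, g_2)$ in the dendrite $L_\infty$ is contained both in $a(t_\infty, b_\infty)$ and in $a(l_\infty, r_\infty)$, so $g_1$ and $g_2$ are distinct points of each of the two arcs.

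Next, on the simple arc $a(t_\infty, b_\infty)$ from $t_\infty$ to $b_\infty$, the two distinct points $g_1, g_2$ can appear in only one of two orders: either $g_1$ is between $t_\infty$ and $g_2$ (i.e.\ $g_1$ separates $t_\infty$ from $g_2$), or $g_2$ is between $t_\infty$ and $g_1$. The same dichotomy holds for $a(l_\infty, r_\infty)$ with $l_\infty$ in place of $t_\infty$. This produces four possible combinations of orderings for the pair $(g_1, g_2)$.

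Finally, the hypothesis of the lemma is symmetric under the relabeling $g_1 \leftrightarrow g_2$, which simultaneously inverts the ordering on both arcs; hence the four combinations collapse into two equivalence classes. Choosing representatives by assuming that $g_1$ separates $l_\infty$ from $g_2$ on the horizontal arc, the two remaining possibilities for the order on the vertical arc coincide precisely with cases (a) and (b). The whole argument is a combinatorial reduction on top of Lemma \ref{lemma:arcs intersection}; the only point requiring care is ensuring that the intersection really is a subarc rather than just a set containing $g_1$ and $g_2$, and this is already handled by the previously proven lemma. I do not expect any serious obstacle.
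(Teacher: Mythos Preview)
Your proposal is correct and matches the paper's approach exactly: the paper does not give a detailed proof but simply states that the result follows from Lemma~\ref{lemma:arcs intersection} together with ``simple combinatorial arguments,'' which is precisely the reduction you outline. One small remark: the combinatorial case analysis actually works directly from the hypothesis that $g_1,g_2$ lie on both arcs, so the appeal to Lemma~\ref{lemma:arcs intersection} is more of a consistency check than a necessary ingredient---but this does no harm.
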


The following proposition is an immediate consequence of the facts in the Lemmas \ref{lemma:arcs intersection} and \ref{lemma:configurations}.
\begin{proposition} \label{lemma:sums_of_arclengths}
Let $L_{\infty}$ be a supermixed labyrinth fractal with the top, bottom, left and right exit denoted by $t_{\infty}, b_{\infty}, l_{\infty}, r_{\infty}$, respectively. Then 
\begin{align}
&\ell(a(t_{\infty},b_{\infty}))+\ell(a(l_{\infty},r_{\infty}))= \nonumber \\ 
 &\max \{\ell(a(t_{\infty},l_{\infty}))+\ell(a(b_{\infty},r_{\infty})),\ell(a(t_{\infty},r_{\infty}))+\ell(a(l_{\infty},b_{\infty})) \}.
\end{align}

\end{proposition}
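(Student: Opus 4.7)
The plan is to perform a case analysis based on the structure of the set $a(t_{\infty}, b_{\infty}) \cap a(l_{\infty}, r_{\infty})$. By Lemma~\ref{lemma:arcs intersect} this intersection is non-empty, and by Lemma~\ref{lemma:arcs intersection} it is either a single point or a common subarc of the two principal arcs. This dichotomy gives two cases; the second one splits further into the two configurations (a) and (b) distinguished by Lemma~\ref{lemma:configurations}.

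In the single-point case, write $a(t_{\infty}, b_{\infty}) \cap a(l_{\infty}, r_{\infty}) = \{g\}$. I would first observe that the four sub-arcs $a(g, t_{\infty}), a(g, b_{\infty}), a(g, l_{\infty}), a(g, r_{\infty})$ pairwise meet only at $g$: the two sub-arcs coming from $a(t_{\infty}, b_{\infty})$ cannot share more, since that arc is injective, and any extra shared point between, say, $a(g, t_{\infty})$ and $a(g, l_{\infty})$ would lie in $a(t_{\infty}, b_{\infty}) \cap a(l_{\infty}, r_{\infty}) = \{g\}$, contradicting its choice. Hence each of the four diagonal arcs $a(t_{\infty}, l_{\infty}), a(b_{\infty}, r_{\infty}), a(t_{\infty}, r_{\infty}), a(l_{\infty}, b_{\infty})$ is, by the uniqueness of arcs in a dendrite, the concatenation of exactly two of these sub-arcs. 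A direct substitution then shows that all three sums appearing in the statement coincide with $\ell(a(t_{\infty}, g)) + \ell(a(g, b_{\infty})) + \ell(a(g, l_{\infty})) + \ell(a(g, r_{\infty}))$, so the identity holds trivially.

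In the common-subarc case, let $g_1, g_2$ denote its endpoints. Lemma~\ref{lemma:configurations} prescribes, up to symmetry, two possible relative orderings of $g_1, g_2$ and the four exits along the two principal arcs. In each configuration I would split both principal arcs at $g_1$ and $g_2$, argue by the same "pairwise-disjoint branches" reasoning as above, now applied at $g_1$ and at $g_2$ separately, to determine which of the resulting segments each diagonal arc uses, and then expand $\ell(a(t_{\infty}, l_{\infty})) + \ell(a(b_{\infty}, r_{\infty}))$ and $\ell(a(t_{\infty}, r_{\infty})) + \ell(a(l_{\infty}, b_{\infty}))$ in terms of these segments. The bookkeeping reveals that in configuration~(a) the common subarc $a(g_1, g_2)$ is traversed by $a(t_{\infty}, r_{\infty})$ and $a(l_{\infty}, b_{\infty})$ (and of course by both principal arcs), whereas in configuration~(b) it is traversed by $a(t_{\infty}, l_{\infty})$ and $a(b_{\infty}, r_{\infty})$. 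Consequently, in each configuration one of the two diagonal sums equals $\ell(a(t_{\infty}, b_{\infty})) + \ell(a(l_{\infty}, r_{\infty}))$ exactly, while the other is smaller by $2\ell(a(g_1, g_2))$; taking the maximum yields the claim.

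The main obstacle is purely combinatorial: correctly tracking the multiplicity with which the common sub-arc $a(g_1, g_2)$ appears in each of the six length sums, and keeping the two configurations distinct from one another without merging them by accident. Once the "local tree" structure at each of $g_1$ and $g_2$ is fixed by the configuration data of Lemma~\ref{lemma:configurations}, the uniqueness of arcs in a dendrite makes the expansions mechanical, and the equality drops out of the comparison with the sum $\ell(a(t_{\infty}, b_{\infty})) + \ell(a(l_{\infty}, r_{\infty}))$.
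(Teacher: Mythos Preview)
Your proposal is correct and follows essentially the same approach as the paper, which merely states that the proposition ``is an immediate consequence of the facts in the Lemmas~\ref{lemma:arcs intersection} and~\ref{lemma:configurations}'' without spelling out any details. Your case analysis on whether the intersection is a point or a subarc, followed by the two configurations of Lemma~\ref{lemma:configurations}, is exactly the argument the paper has in mind, and your bookkeeping of which diagonal arcs traverse the common segment $a(g_1,g_2)$ is accurate.
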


\section{Blocked labyrinth patterns}\label{sec:blocked}
An $m\times m$-labyrinth pattern ${\cal A}$ is called \emph{horizontally blocked} if the row (of squares) 
from the left to the right exit contains at least one black square. It is called \emph{vertically blocked} if the 
column (of squares) from the top to the bottom exit contains at least one black square. Analogously we define for any 
$n \ge 1$ a horizontally or vertically blocked labyrinth set of level $n$.
One can easily check that horizontally or 
vertically blocked $m\times m$-labyrinth patterns only exist for for $m\ge 4$. For example, the labyrinth patterns shown in Figure \ref{fig:A1tildeA2} 
are horizontally and vertically blocked, while those in Figure \ref{fig:not_blocked} are not blocked. 

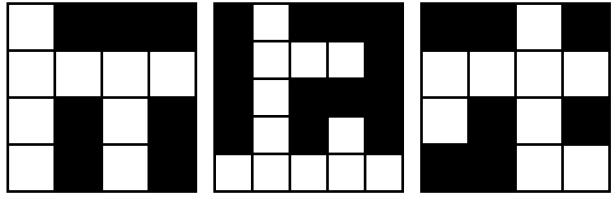
\begin{figure}[hhhh]
\begin{center}
\begin{tikzpicture}[scale=.25]
\draw[line width=1pt] (0,0) rectangle (10,10);
\draw[line width=1pt] (2.5, 0) -- (2.5,10);
\draw[line width=1pt] (5, 0) -- (5,10);
\draw[line width=1pt] (7.5, 0) -- (7.5,10);
\draw[line width=1pt] (0, 2.5) -- (10,2.5);
\draw[line width=1pt] (0, 5) -- (10,5);
\draw[line width=1pt] (0, 7.5) -- (10,7.5);
\filldraw[fill=black, draw=black] (2.5,0) rectangle (5, 5);
\filldraw[fill=black, draw=black] (7.5,0) rectangle (10, 5);
\filldraw[fill=black, draw=black] (2.5,7.5) rectangle (10, 10);
\draw[line width=1pt] (11,0) rectangle (21,10);
\draw[line width=1pt] (13, 0) -- (13,10);
\draw[line width=1pt] (15, 0) -- (15,10);
\draw[line width=1pt] (17, 0) -- (17,10);
\draw[line width=1pt] (19, 0) -- (19,10);
\draw[line width=1pt] (11, 2) -- (21,2);
\draw[line width=1pt] (11, 4) -- (21,4);
\draw[line width=1pt] (11, 6) -- (21,6);
\draw[line width=1pt] (11, 8) -- (21,8);
\filldraw[fill=black, draw=black] (11,2) rectangle (13, 10);
\filldraw[fill=black, draw=black] (15,2) rectangle (17, 6);
\filldraw[fill=black, draw=black] (17,4) rectangle (19, 6);
\filldraw[fill=black, draw=black] (19,2) rectangle (21, 8);
\filldraw[fill=black, draw=black] (15,8) rectangle (21, 10);
\draw[line width=1pt] (22,0) rectangle (32,10);
\draw[line width=1pt] (24.5, 0) -- (24.5,10);
\draw[line width=1pt] (27, 0) -- (27,10);
\draw[line width=1pt] (29.5, 0) -- (29.5,10);
\draw[line width=1pt] (22, 2.5) -- (32,2.5);
\draw[line width=1pt] (22, 5) -- (32,5);
\draw[line width=1pt] (22, 7.5) -- (32,7.5);
\filldraw[fill=black, draw=black] (22,0) rectangle (27, 2.5);
\filldraw[fill=black, draw=black] (24.5,2.5) rectangle (27, 5);
\filldraw[fill=black, draw=black] (22,7.5) rectangle (27, 10);
\filldraw[fill=black, draw=black] (29.5,2.5) rectangle (32, 5);
\filldraw[fill=black, draw=black] (29.5,7.5) rectangle (32, 10);
\end{tikzpicture}
\caption{Examples of labyrinth patterns, that are neither horizontally nor vertically blocked}
\label{fig:not_blocked}
\end{center}
\end{figure}

Let us recall some important results  obtained in the case of self-similar and mixed labyrinth fractals constructed based on blocked labyrinth patterns.

In the self-similar case the following result was proven \cite[Theorem 3.18]{laby_oigemoan}:
\begin{theorem} 
\label{theo:main result_selfsimilar}
Let $L_{\infty}$ be the (self-similar) labyrinth fractal generated by a horizontally and 
vertically blocked labyrinth pattern of width $m$. 
with path matrix $M$ and $r$ be the spectral radius of $M$.
\begin{enumerate}
\item[(a)]Between any two points in $L_{\infty}$ there is a unique arc ${a}$.
\item[(b)]The length of ${a}$ is infinite.
\item[(c)]The set of all points, at which no tangent to ${a}$ exists, 
is dense in ${a}$.
\item[(d)]$\dim_B(a)=\frac{\log(r)}{\log(m)}$.
\end{enumerate}
\end{theorem}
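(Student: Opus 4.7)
The plan is to handle the four parts by leveraging the structure already developed in the paper. Part (a) is immediate: by Theorem~\ref{theorem:dendrite} the fractal $L_\infty$ is a dendrite, and in any dendrite there is a unique arc between any two distinct points \cite[Theorem 3, par.~47, V, p.~181]{Kuratowski}. Part (d) follows directly from Proposition~\ref{prop:Boxcounting}: in the self-similar case $m_k = m$ for all $k$, and by the construction method $M(n) = M^n$, so the path length $\A(n)$ is the first row-sum of $M^n$. A Perron--Frobenius argument (the path matrix has a strictly positive Perron eigenvector whenever the graph is connected, which holds here) yields $\A(n) \asymp r^n$, and therefore $\lim_{n\to\infty}\log \A(n)/(n\log m) = \log r / \log m$.

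For part (b) the key algebraic input is the strict inequality $r > m$ for horizontally and vertically blocked patterns. To establish this I would argue that in a blocked pattern the $\B$-path from the left to the right exit must detour around at least one black square in its row, so $\B_{1,1} \ge m + 2$, and similarly $\A_{1,1} \ge m + 2$ by the vertically blocked property; meanwhile the remaining four exit-to-exit paths all have length at least $m$ by the corner and tree properties. Thus the all-ones vector $\mathbf{1}$ satisfies $M\mathbf{1} \ge m\mathbf{1}$ coordinatewise with strict inequality in at least two entries, and a standard Perron--Frobenius positivity argument upgrades this to $r > m$. Having $r > m$, Proposition~\ref{lemma:length_of_parametrisation} implies that any parametrization of a top-to-bottom curve inside $\mathcal{W}_n$ has length at least $(\A(n)-1)/(2m^n) \asymp (r/m)^n \to \infty$. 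For a general arc $a(x,y)$ with $x \ne y$, choose $n$ large enough that $x$ and $y$ lie in different white squares of level $n$. Then $a$ must cross some square $W \in \mathcal{W}_n$ entirely from one exit to another, and by self-similarity the piece of $a$ inside $W$ is a rescaled copy of an exit-to-exit arc of $L_\infty$; its length is therefore already infinite.

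For part (c), the strategy is to exploit self-similarity together with the blocking. Consider a point $z$ lying on $a$ that is not an exit of any square at any level. By Proposition~\ref{lemma:ArcSimilarity1}, for every $n$ the portion of $a$ contained in the level-$n$ square $W_n(z)\ni z$ is a rescaled exit-to-exit arc of $L_\infty$, and the blocking forces this arc to make a ``detour'' of uniformly bounded minimum angular deviation inside $W_n(z)$. Feeding this back at arbitrarily small scales produces pairs of points $u_n^-, u_n^+$ on $a$ converging to $z$ from opposite sides whose secant directions differ by a fixed positive angle; this rules out the existence of a tangent to $a$ at $z$. The set of such $z$ is the complement of a countable set in $a$, and in particular it is dense.

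The main obstacle I expect is the quantitative step in (c): constructing the uniform angular deviation inside each $W_n(z)$ and showing that the secant directions really oscillate, rather than converging to some common limit. The inequality $r > m$ in (b) is conceptually clean but requires careful handling of the row-sum argument for small blocked patterns to rule out degenerate spectral configurations; once $r > m$ is in hand, the infinite-length conclusion follows from the length estimate of Proposition~\ref{lemma:length_of_parametrisation} essentially for free.
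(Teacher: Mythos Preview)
This theorem is not proved in the present paper at all: it is quoted verbatim from \cite[Theorem~3.18]{laby_oigemoan} as background for the mixed and supermixed results. So there is no ``paper's own proof'' to compare against here; any proof you supply would have to stand on its own or on the machinery of \cite{laby_oigemoan}.

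That said, your plan has a genuine gap in part~(b). The claim that ``the remaining four exit-to-exit paths all have length at least $m$ by the corner and tree properties'' is neither justified nor true in general: for a blocked $m\times m$ pattern whose top and right exits sit near the same corner, the $\C$-path can have length as small as $3$, so the row-sum inequality $M\mathbf{1}\ge m\mathbf{1}$ fails and the Perron--Frobenius argument does not go through as stated. The inequality $r>m$ is correct, but it is obtained in \cite{laby_oigemoan} (and, implicitly, in the present paper via Proposition~\ref{prop:basic_ineq_virtual_matrix} and Lemma~\ref{lemma:exists_c}) by a different route: one works with the \emph{reduced} path matrix $\overline M$ and the test vector $(1,1,1+c,1+c)^{\top}$ rather than $\mathbf 1$, which is exactly what absorbs the possible shortness of the $\C,\D,\E,\F$-paths. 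Similarly, in part~(d) the assertion that $M$ ``has a strictly positive Perron eigenvector whenever the graph is connected'' is too quick: primitivity of $M$ is a nontrivial consequence of the blocked hypothesis (it is what Lemmas~\ref{lemma:ems33}--\ref{lemma:ems35} are building towards in \cite{laby_oigemoan}), not of connectedness of $\mathcal G(\mathcal A)$ alone. Parts~(a) and the outline for~(c) are fine as sketches.
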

For the case of mixed labyrinth fractals, the following two results were proven in \cite{cristealeobacher_arcs}.
\begin{theorem}
\label{theo:existfinitearcs_mixlaby}
There exist  sequences $\{{\cal A}_k\}_{k=1}^{\infty}$ of (both horizontally and vertically) blocked labyrinth patterns, such that the limit set $L_{\infty}$ has the property that
 for any two points in $L_{\infty}$ the length of the arc $a\subset L_{\infty}$ that connects them is finite. For almost all points $x_0 \in a$ (with respect to the length) there exists the tangent at $x_0$ to the arc $a$.
\end{theorem}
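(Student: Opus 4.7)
The plan is to construct an explicit sequence of blocked patterns whose widths $m_k$ grow fast enough that the multiplicative effect on path lengths remains summable. Concretely, I would fix $(m_k)_{k\ge 1}$ with $m_k\ge 4$ and $\sum_k 1/m_k<\infty$ (for instance $m_k=2^{k+1}$), and for each $m_k$ exhibit a horizontally and vertically blocked labyrinth pattern $\mathcal{A}_k$ of width $m_k$ all six of whose path lengths $\A_k,\B_k,\C_k,\D_k,\E_k,\F_k$ are at most $m_k+C$, where $C$ is a universal constant. Such patterns should be constructible by placing a single blocking black square in each of the exit row and the exit column and routing the tree so that only one-square detours around these blocks are needed; the $4\times4$ patterns in Figure~\ref{fig:A1tildeA2} already provide examples with small $C$.

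The second step would exploit that in the mixed (non-supermixed) case $s_k=1$, so Theorem~\ref{theo:basisformel} collapses to $M(n)=M_1\cdots M_n$. By \eqref{eq:matrix_k,h}, $M_k\mathbf{1}\le(m_k+C)\mathbf{1}$ componentwise, and since each $M_k$ has non-negative entries, iterating yields $M(n)\mathbf{1}\le\prod_{k=1}^n(m_k+C)\mathbf{1}$. Hence any path length $P(n)$ between two exits of $\mathcal{W}_n$ satisfies
\[
\frac{P(n)}{m(n)}\;\le\;\prod_{k=1}^n\!\left(1+\frac{C}{m_k}\right)\;\le\;\prod_{k=1}^\infty\!\left(1+\frac{C}{m_k}\right)\;=:\;K\;<\;\infty.
\]

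The third step would translate this into a bound on arc lengths. For an arc between two exits of $L_\infty$, its length equals $\lim_{n\to\infty}P(n)/m(n)$---the lower bound is Proposition~\ref{lemma:length_of_parametrisation}, and the matching upper bound follows by parametrising through the centres of the squares in the path as for mixed labyrinth fractals in \cite{mixlaby}---so it is at most $K$. For arbitrary $x,y\in L_\infty$ I would apply Lemma~\ref{lemma:arc_construction} with $n$ large enough that $W_n(x)\ne W_n(y)$: then $a(x,y)$ lies in the at most $P(n)$ sub-squares of side $1/m(n)$ on $p_n(W_n(x),W_n(y))$, and the arc piece inside each such sub-square is an arc in a rescaled mixed labyrinth fractal with tail sequence $(\mathcal{A}_j)_{j>n}$ whose length is bounded by $(1/m(n))\prod_{k>n}(1+C/m_k)$. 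Summing and telescoping gives $\ell(a(x,y))\le K$ uniformly.

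Finally, the almost-everywhere existence of a tangent is a classical consequence of rectifiability: the arc-length parametrisation of a rectifiable Jordan arc is $1$-Lipschitz, so Rademacher's theorem produces a derivative, and therefore a tangent to $a$, at almost every point with respect to arc-length measure. The step I expect to be most delicate is the first, namely producing blocked labyrinth patterns of arbitrary width $m$ with all six path lengths only $m+O(1)$; the tree property combined with the fixed positions of the exits and the blocking constraint imposes nontrivial combinatorial restrictions, and the cleanest route is probably to exhibit a single explicit parametric family.
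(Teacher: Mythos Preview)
This theorem is not proved in the present paper; it is quoted from \cite{cristealeobacher_arcs} in Section~\ref{sec:blocked} as background, so there is no in-paper argument to compare your proposal against. Your plan is essentially the one carried out in that reference: choose a width sequence with $\sum_k 1/m_k<\infty$, exhibit for each $m$ a blocked labyrinth pattern whose six exit-to-exit path lengths are $m+O(1)$ (done there via an explicit one-parameter family), use the mixed-case identity $M(n)=M_1\cdots M_n$ together with the row-sum bound $M_k\mathbf 1\le (m_k+C)\mathbf 1$ to get $P(n)/m(n)\le K$ uniformly, pass to arc length via a covering argument, and obtain the tangent statement from Rademacher applied to the arc-length parametrisation. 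You have correctly identified the pattern construction as the one genuinely constructive step.

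Two places where the sketch wants tightening. First, in the step to arbitrary $x,y$: the pieces of $a(x,y)$ inside the two endpoint squares $W_n(x)$ and $W_n(y)$ are \emph{not} arcs between exits of the rescaled tail fractal, so the per-square bound $(1/m(n))\prod_{k>n}(1+C/m_k)$ does not cover them directly. This is handled by iterating into those two squares---at each subsequent level only one square per endpoint needs further recursion, and the residual has diameter $\le \sqrt 2/m(n)\to 0$---which is presumably your ``telescoping'', but it needs a couple of lines rather than a word. Second, the arc length between exits is not literally $\lim_n P(n)/m(n)$; what the polygonal approximation through square centres gives (via lower semicontinuity of length) is the upper bound $\ell(a)\le\liminf_n (P(n)-1)/m(n)$, together with the lower bound from Proposition~\ref{lemma:length_of_parametrisation}, and that inequality is all you need.
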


\begin{proposition}
\label{prop:existinfinitearcs_mixlaby}
There exist  sequences $\{{\cal A}_k\}_{k=1}^{\infty}$ of (both horizontally and vertically) blocked labyrinth patterns, such that the limit set $L_{\infty}$ has the property that
 for any two points in $L_{\infty}$ the length of the arc $a\subset L_{\infty}$ that connects them is infinite.
\end{proposition}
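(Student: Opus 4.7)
The plan is to prove existence by exhibiting the simplest possible concrete example. Fix any horizontally and vertically blocked labyrinth pattern $\mathcal{A}$ of some width $m\ge 4$; such a pattern exists, and indeed the leftmost pattern $\mathcal{A}_{1,1}$ in Figure~\ref{fig:A1tildeA2} is one such. I will take the constant sequence $\mathcal{A}_k := \mathcal{A}$ for all $k\ge 1$. This sequence trivially satisfies the consistency assumptions and, since $s_k=1$ for every $k$, it defines a mixed labyrinth fractal in the sense used in the proposition.

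Because the sequence is constant, the prefractals $\mathcal{W}_n$ coincide with those of the self-similar labyrinth fractal generated by $\mathcal{A}$, so $L_\infty$ is precisely the self-similar labyrinth fractal from \cite{laby_oigemoan}. I can then invoke Theorem~\ref{theo:main result_selfsimilar}(b), which asserts that between any two distinct points of this self-similar $L_\infty$ the unique arc has infinite length. This gives exactly the conclusion of the proposition for the constant sequence and therefore establishes the existence claim.

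There is essentially no obstacle along this route, since self-similar labyrinth fractals are a special case of mixed ones and the heavy machinery — the spectral-radius analysis of the path matrix yielding $r>m$ for blocked patterns — has already been carried out in the self-similar setting. The interest of the proposition lies in its contrast with Theorem~\ref{theo:existfinitearcs_mixlaby}: together they show that within the class of (blocked) mixed labyrinth fractals both finite and infinite arc lengths are possible, depending on which sequence of patterns is chosen.

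If one wanted a genuinely non-constant example, the strategy would be to pick a finite family $\{\mathcal{B}_1,\ldots,\mathcal{B}_K\}$ of blocked labyrinth patterns of common width $m$ and common exits positions, and let $(\mathcal{A}_k)_{k\ge 1}$ be any sequence drawn from this family. Each path matrix $M_{\mathcal{B}_j}$ has spectral radius strictly exceeding $m$; the main technical step would be to derive a uniform lower bound on the growth of the product $M_1M_2\cdots M_n\mathbf{1}$ relative to $m^n$, using positivity of the matrices, and then to invoke Proposition~\ref{lemma:length_of_parametrisation} to translate growth of path lengths in $\mathcal{G}(\mathcal{W}_n)$ into infinite arc length in $L_\infty$. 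This is where the real work would be, but it is not required for the bare existence statement of the proposition.
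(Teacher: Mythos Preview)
Your argument is correct: taking the constant sequence $\mathcal{A}_k\equiv\mathcal{A}$ for a single horizontally and vertically blocked pattern reduces to the self-similar case, and then Theorem~\ref{theo:main result_selfsimilar}(b) immediately gives infinite arc length between any two distinct points. This is a legitimate proof of the bare existence statement.

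Note, however, that the paper does \emph{not} supply its own proof of this proposition; it is recalled verbatim from \cite{cristealeobacher_arcs} together with Theorem~\ref{theo:existfinitearcs_mixlaby}, as part of the background in Section~\ref{sec:blocked}. So there is no in-paper proof to compare against. What the present paper contributes in this direction is the far stronger Theorem~\ref{theorem:solve_conj_mixlaby}, which gives the sufficient condition $\sum_{k\ge 1}1/m_k=\infty$ for \emph{every} mixed labyrinth fractal built from blocked patterns to have only infinite-length arcs. Your constant sequence is of course covered by that criterion (with $m_k\equiv m$), so the paper's main theorem also reproves the proposition, but via a route that is massively more laborious than yours---reduced path matrices, the virtual matrices $L(m)$, the $p$-norm displacement argument of Proposition~\ref{prop:basic_ineq_virtual_matrix}, and Lemma~\ref{lemma:infinite_paths}. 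For the mere existence claim your shortcut through the self-similar theorem is entirely adequate and much more economical.
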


At this point, we state a conjecture for the supermixed labyrinth fractals. 

\begin{conjecture}
\label{conj:infinite2}
Let $\{\tilde{\mathcal A}_k   \}_{k\ge 1}$ be a sequence of collections of
labyrinth patterns, such that for all $k\ge 1$ all patterns in $\tilde{\mathcal
A}_k$ are horizontally and vertically blocked, 
and the width sequence $\{m_k \}_{k\ge 1}$ satisfies $\sum_{k=1}^\infty \frac{1}{m_k}=\infty$. Then the supermixed labyrinth fractal
$L_{\infty}$ generated by the sequence $\{\tilde{\mathcal A}_k   \}_{k\ge 1}$
has the property that between any two distinct points in the fractal, the arc
that connects them in the fractal has infinite length.  
\end{conjecture}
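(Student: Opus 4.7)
The plan is to adapt the mixed-case argument (Section~\ref{sec:arcs_mixed}) to the supermixed setting by showing that, for any two distinct points $x,y\in L_\infty$, the quantity $|p_n|/m(n)\to\infty$, where $p_n=p_n(W_n(x),W_n(y))$ is the path of length $|p_n|$ in $\mathcal{G}(\mathcal{W}_n)$ joining the level-$n$ squares that contain $x$ and $y$. By Lemma~\ref{lemma:arc_construction} and Proposition~\ref{lemma:length_of_parametrisation}, which gives $\ell(a(x,y))\geq(|p_n|-1)/(2m(n))$, such divergence implies $\ell(a(x,y))=\infty$. Moreover, because each set $L_\infty\cap W$ for $W\in\mathcal{W}_n$ is itself a supermixed labyrinth fractal scaled by $1/m(n)$, it suffices to prove infinite length for the arcs joining the four exits of $L_\infty$: any arc between generic points eventually coincides with an exit-to-exit arc of a sub-fractal.

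The engine of the induction is Theorem~\ref{theo:basisformel}. Summing rows yields
\[
M(n+1)\vec{1} = \sum_{h=1}^{s_{n+1}} Q_{n,h}\bigl(M_{n+1,h}\vec{1}\bigr),\qquad M(n)\vec{1} = \bigl(A(n),B(n),C(n),D(n),E(n),F(n)\bigr)^{\!T}.
\]
If every blocked pattern $\mathcal{A}_{n+1,h}$ satisfied the componentwise lower bound $M_{n+1,h}\vec{1}\geq(m_{n+1}+c)\vec{1}$, then since $M(n)=\sum_h Q_{n,h}$ and all matrices are nonnegative, one would obtain $M(n+1)\vec{1}\geq(m_{n+1}+c)M(n)\vec{1}$, and telescoping would give
\[
\frac{M(n)\vec{1}}{m(n)} \geq \prod_{k=2}^{n}\Bigl(1+\frac{c}{m_k}\Bigr)M(1)\vec{1},
\]
so that the hypothesis $\sum 1/m_k=\infty$ forces every component of $M(n)\vec{1}/m(n)$ to diverge, completing the proof.

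The main obstacle is producing the per-pattern lower bound $M_{n+1,h}\vec{1}\geq(m_{n+1}+c)\vec{1}$. For the A- and B-paths this bound is essentially immediate: in a vertically (respectively horizontally) blocked pattern of width $m$, any top-to-bottom (respectively left-to-right) path must detour around at least one black square in the exit column (respectively row), adding at least two squares to the straight traversal of length $m$, so $\ell(A),\ell(B)\geq m+2$. However, the analogous bound for the corner-to-corner paths $C,D,E,F$ is in general \emph{false}: when the vertical exit column $c_{n+1}$ and horizontal exit row $r_{n+1}$ lie near a shared corner, a C-, D-, E- or F-path can have length of order $2m-c_{n+1}-r_{n+1}$, which may be much smaller than $m+2$ and not even grow with $m$. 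Any workable proof must therefore replace the uniform per-pattern bound with an averaged estimate. A promising route is to combine Lemma~\ref{lemma:sums_of_pathlengths}, which ties $\ell(F)+\ell(D)$ and $\ell(E)+\ell(C)$ to $\ell(A)+\ell(B)\geq 2m+4$, with the observation that a path in $\mathcal{G}(\mathcal{W}_n)$ traversing between far-apart squares must contain a positive fraction of A- or B-type squares, since these are the only ones making directional progress. The delicate step is to demonstrate that, after the counting matrices $Q_{n,h}$ redistribute these contributions across the patterns of $\widetilde{\mathcal{A}}_{n+1}$, a multiplicative factor of the form $(1+c/m_{n+1})$ with $c>0$ uniform in $n$ still survives in the inductive estimate. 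This combinatorial accounting --- rather than the linear-algebraic framework --- is the heart of the conjecture, and precisely the difficulty the authors flag at the close of Section~\ref{sec:arcs_mixed}; a complete solution may instead require a Perron--Frobenius-type spectral analysis of infinite products built from the $Q_{n,h}$ and $M_{n,h}$.
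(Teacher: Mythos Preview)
The statement you are attempting to prove is stated in the paper as a \emph{conjecture} and is \emph{not} proven there; the paper proves only the special mixed case $s_k=1$ (Theorem~\ref{theorem:solve_conj_mixlaby}) and explicitly flags the supermixed case as open in the remarks closing Section~\ref{sec:arcs_mixed} and in Section~\ref{sec:conclusions}. So there is no ``paper's own proof'' to compare against here.

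Your proposal is likewise not a proof: you set up the telescoping scheme correctly via Theorem~\ref{theo:basisformel}, but then observe that the key per-pattern bound $M_{n+1,h}\vec 1\ge(m_{n+1}+c)\vec 1$ fails for the $\C$-, $\D$-, $\E$-, $\F$-paths, and you do not supply a working substitute---you end by conceding that ``a complete solution may instead require'' further ideas. That is the genuine gap.

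It is worth noting that even in the mixed case the paper does \emph{not} use the all-ones vector you propose. Instead it passes to the $4\times4$ \emph{reduced} path matrix $\overline M$ and proves, via a $p$-norm displacement argument (Proposition~\ref{prop:basic_ineq_virtual_matrix}), the weighted inequality $\overline M(1,1,1+c,1+c)^\top\ge L(m)(1,1,1+c,1+c)^\top$ for a suitable $c\in(0,1)$; Lemma~\ref{lemma:exists_c} then yields the multiplicative factor $(1+\kappa/m)$ against the \emph{same} weighted vector, and Corollary~\ref{corollary:reduced-product} (the multiplicativity $\overline{M_1M_2}=\overline M_1\,\overline M_2$) lets this iterate. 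The obstruction in the supermixed case is twofold, and the paper spells both out: first, there is no analogue of Corollary~\ref{corollary:reduced-product} for supermixed labyrinth sets (so the reduced-matrix product structure collapses); second, the concatenation-of-paths argument behind Proposition~\ref{prop:basic_ineq_virtual_matrix}---specifically the step that a $\C$/$\E$ pair forces an $\A$- and a $\B$-square in the refined path---can fail when adjacent level-$n$ squares receive \emph{different} patterns, as the explicit counterexample in Figure~\ref{fig:concatenation_supermix_counterexample} shows. Your proposal runs into the same wall from a slightly different direction, and your diagnosis of the difficulty is accurate, but the argument as written does not close the gap.
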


In the next section
we give a proof of the result for the mixed case and we will highlight
some of the difficulties that occur when attempting to generalise 
the method to the supermixed case.\\

In the remainder of this section we recall some facts from \cite{laby_oigemoan} about 
path matrices of blocked labyrinth patterns.

\begin{lemma}[Lemma 3.3 in \cite{laby_oigemoan}]\label{lemma:ems33}
Let $M=(m_{i,j})_{i,j\in {\cal J}}$ be the path matrix of a blocked labyrinth pattern.
Then 
\begin{enumerate}
\item[(a)] $m_{\A,\C}=m_{\A,\E}$ and $m_{\A,\D}=m_{\A,\F}$.
\item[(b)] $m_{\B,\C}=m_{\B,\E}$ and $m_{\B,\D}=m_{\B,\F}$.
\item[(c)] $m_{\C,\C}+m_{\E,\C}=m_{\C,\E}+m_{\E,\E}$ and 
$m_{\C,\D}+m_{\E,\D }=m_{\C,\F}+m_{\E,\F}$.
\item[(d)] $m_{\D,\C}+m_{\F,\C}=m_{\D,\E}+m_{\F,\E}$ and 
$m_{\D,\D}+m_{\F,\D}=m_{\D,\F}+m_{\F,\F}$.
\end{enumerate}
\end{lemma}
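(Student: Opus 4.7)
The plan is to prove (a)--(d) by a single side-incidence counting argument applied to each of the six exit-to-exit paths $p_X$, $X\in{\cal J}$; the blocked hypothesis is not actually needed for the identities themselves and merely ensures that the cases are nontrivial. The key bookkeeping device is: by the very definition of the type of a square in a path, a square of type $j\in{\cal J}$ carries path-edges on exactly two of its four sides $\{\mathrm{N},\mathrm{S},\mathrm{E},\mathrm{W}\}$, namely $\A\leftrightarrow\{\mathrm{N},\mathrm{S}\}$, $\B\leftrightarrow\{\mathrm{W},\mathrm{E}\}$, $\C\leftrightarrow\{\mathrm{N},\mathrm{E}\}$, $\D\leftrightarrow\{\mathrm{S},\mathrm{E}\}$, $\E\leftrightarrow\{\mathrm{W},\mathrm{S}\}$, $\F\leftrightarrow\{\mathrm{W},\mathrm{N}\}$. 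I would augment each $p_X$ by two virtual half-edges on the exterior sides of its two exit squares, consistent with the convention in Section~\ref{sec:path_matrices} that an exit square has a phantom neighbour on the outside for purposes of assigning its type.

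Next, for each fixed $X\in{\cal J}$ I would evaluate the four side-counts of $p_X$ in two different ways. Summing, over all squares of $p_X$, the contribution of each type to a given side yields
\begin{gather*}
N_X=m_{X,\A}+m_{X,\C}+m_{X,\F},\quad S_X=m_{X,\A}+m_{X,\D}+m_{X,\E},\\
E_X=m_{X,\B}+m_{X,\C}+m_{X,\D},\quad W_X=m_{X,\B}+m_{X,\E}+m_{X,\F}.
\end{gather*}
Counting instead edge-by-edge, each real vertical edge contributes $1$ to both $N_X$ and $S_X$ (as the N-side of the square below it and the S-side of the square above), each real horizontal edge contributes $1$ to both $E_X$ and $W_X$, and each virtual half-edge contributes $1$ to exactly one of the four counts, determined by which exit it sits at. Consequently $N_X-S_X$ and $E_X-W_X$ depend only on the virtual-edge data at the endpoints of $p_X$, which I would tabulate in the six cases.

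Equating the two evaluations yields, for every $X$, the linear system
\begin{align*}
m_{X,\C}+m_{X,\F}-m_{X,\D}-m_{X,\E}&=\nu_N(X)-\nu_S(X),\\
m_{X,\C}+m_{X,\D}-m_{X,\E}-m_{X,\F}&=\nu_E(X)-\nu_W(X),
\end{align*}
where $\nu_\bullet(X)\in\{0,1\}$ records on which side each of the two virtual edges of $p_X$ sits. Half-sum and half-difference isolate $m_{X,\C}-m_{X,\E}$ and $m_{X,\F}-m_{X,\D}$ in closed form. For $X=\A,\B$ both right-hand sides vanish, which gives (a) and (b). For $X=\C,\E$ one obtains $m_{\C,\C}=m_{\C,\E}+1$, $m_{\C,\D}=m_{\C,\F}$, $m_{\E,\E}=m_{\E,\C}+1$, $m_{\E,\D}=m_{\E,\F}$; adding these produces the two identities of (c). Analogously, $X=\D,\F$ give $m_{\D,\C}=m_{\D,\E}$, $m_{\D,\D}=m_{\D,\F}+1$, $m_{\F,\C}=m_{\F,\E}$, $m_{\F,\F}=m_{\F,\D}+1$, and these add up to (d).

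The one delicate step I anticipate is the endpoint bookkeeping: for each of the six paths one must identify correctly on which sides the two virtual half-edges sit, and verify that the type assigned to each endpoint square already incorporates the virtual edge on its exterior side, so that no double- or under-counting creeps into the four side-sums above. Once that setup is made consistent once and for all, the entire lemma collapses to the trivial $2\times 2$ linear algebra displayed above, performed six times.
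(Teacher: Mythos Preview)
Your argument is correct. The side-incidence bookkeeping is set up consistently with the paper's conventions in Section~\ref{sec:path_matrices}, and the double-counting of north/south and east/west incidences along each $p_X$ does give exactly the displayed linear system; the six endpoint tabulations you describe check out and yield (a)--(d) as claimed. Your remark that the blocked hypothesis is not needed for these identities is also right: the argument uses only that $p_X$ is a path in the square lattice together with the definition of square types, so the identities hold for any labyrinth pattern.

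As for comparison: the present paper does not prove this lemma at all but simply quotes it as Lemma~3.3 of \cite{laby_oigemoan}, so there is no in-paper proof to compare against. Your self-contained counting proof is therefore a genuine addition rather than a reworking, and it has the pleasant feature of producing the finer statements $m_{\C,\C}=m_{\C,\E}+1$, $m_{\E,\E}=m_{\E,\C}+1$, $m_{\D,\D}=m_{\D,\F}+1$, $m_{\F,\F}=m_{\F,\D}+1$ (and the corresponding off-diagonal equalities) from which (c) and (d) follow by addition; these are strictly stronger than what is stated. The only cosmetic point worth tightening in a final write-up is the ``vertical/horizontal edge'' terminology, which you use in the opposite sense from what one might first guess; making explicit that ``vertical edge'' means an edge between vertically adjacent squares (hence a shared horizontal side) removes any possible confusion in the $N_X=S_X+\cdots$ step.
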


\begin{lemma}[Lemma 3.4 in \cite{laby_oigemoan}]\label{lemma:ems34}
Let $M=(m_{i,j})_{i,j\in {\cal J}}$ be the path matrix of a blocked labyrinth pattern. Then
\begin{align*}
m_{\A,j}\ge 1 \text{ for } j\in\{\A,\C,\D,\E,\F\}
\text{ and }\;m_{\B,j}\ge 1 \text{ for } j\in\{\B,\C,\D,\E,\F\}\,.
\end{align*}
\end{lemma}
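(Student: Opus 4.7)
The plan is to prove the $\A$-row inequalities directly and obtain the $\B$-row by a symmetric argument. By Lemma~\ref{lemma:ems33}(a), the five $\A$-row claims reduce to three: $m_{\A,\A}\ge 1$, $m_{\A,\C}\ge 1$ (which then gives $m_{\A,\E}\ge 1$), and $m_{\A,\F}\ge 1$ (which then gives $m_{\A,\D}\ge 1$). For $m_{\A,\A}\ge 1$, the top exit square is itself of type $\A$: under the convention stated right after the type definitions in Section~\ref{sec:path_matrices} that an exit square has a neighbour outside the pattern across the exit edge, the top exit has a top (outside) neighbour and a bottom (in-path) neighbour, so it is an $\A$-square.

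The heart of the argument is to view the $\A$-path as an alternating sequence of maximal vertical and horizontal strokes, labelled by the direction of traversal ($V^{\pm}$ or $H^{\pm}$). The path starts with a $V^-$ stroke leaving the top exit and ends with a $V^-$ stroke entering the bottom exit. Each consecutive pair of strokes is joined by a single square whose type is determined by the direction change: for instance $V^-\!\to\! H^+$ produces a $\C$-square, $V^-\!\to\! H^-$ an $\F$-square, $H^+\!\to\! V^-$ an $\E$-square, $H^-\!\to\! V^-$ a $\D$-square, and analogously for the four transitions involving $V^+$. To show $m_{\A,\C}\ge 1$, I would argue by contradiction: if no $\C$-square occurs, neither a $V^-\!\to\! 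H^+$ nor an $H^-\!\to\! V^+$ transition can appear. Starting from the initial $V^-$ stroke, the only admissible first transition is then $V^-\!\to\! H^-$, and from an $H^-$ stroke the only admissible exit is $H^-\!\to\! V^-$; so by induction the whole path consists exclusively of $V^-$ and $H^-$ strokes and moves strictly downward and leftward from the top exit, never returning to the column of the bottom exit, which is the desired contradiction. Vertical blocking is used here to rule out the degenerate alternative where the path has no horizontal strokes at all. A mirror-image argument gives $m_{\A,\F}\ge 1$ by excluding the scenario in which only $V^-$ and $H^+$ strokes occur.

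The $\B$-row is obtained by a $90^{\circ}$ rotation of the above argument, with horizontal blocking replacing vertical blocking and Lemma~\ref{lemma:ems33}(b) performing the analogous three-to-five reduction; the starting and ending strokes become $H^+$, the left exit square is of type $\B$, and the roles of the horizontal and vertical stroke directions are swapped throughout. I expect the main bookkeeping obstacle to be enumerating the eight possible stroke-to-stroke transitions carefully enough that excluding any single turn type really does force the path into a proper quadrant of the pattern that cannot connect the prescribed pair of exits; once this enumeration is in place, everything is routine.
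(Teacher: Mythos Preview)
The paper does not supply a proof of this lemma; it is quoted verbatim from \cite{laby_oigemoan}. So there is no in-paper argument to compare against, and I assess your proposal on its own merits.

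Your stroke decomposition and the contradiction arguments for $m_{\A,\C}\ge 1$ and $m_{\A,\F}\ge 1$ (hence, via Lemma~\ref{lemma:ems33}(a), for $m_{\A,\E}$ and $m_{\A,\D}$) are correct: forbidding a $\C$-turn forces every transition out of a $V^-$ stroke to be $V^-\to H^-$ and every transition out of an $H^-$ stroke to be $H^-\to V^-$, so the whole path is monotone down-left and cannot return to the exit column once it leaves it; vertical blocking excludes the straight-down degenerate case. The $\B$-row follows by the rotation you describe.

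The gap is in your treatment of $m_{\A,\A}\ge 1$. You claim the top exit is of type $\A$ because it has ``a top (outside) neighbour and a bottom (in-path) neighbour'', but nothing forces the second square of the $\A$-path to sit \emph{below} the top exit: it may equally well sit to the left or right, making the top exit an $\F$- or $\C$-square. (The symmetric objection applies to the bottom exit, and to your $\B$-row claim that the left exit is of type $\B$.) The missing ingredient is the tree property, which makes the $\A$-path an \emph{induced} path in the grid. In your stroke language: if there are no $\A$-squares, every vertical stroke has length one, so two consecutive horizontal blocks of opposite sign are separated by a single vertical step; the last square of the first block and the first square of the second block then lie in the same column at vertical distance one, giving two non-consecutive path squares that are grid-adjacent---a chord, contradicting the tree property. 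Hence all horizontal blocks share one sign, the net horizontal displacement is nonzero, and this contradicts the top and bottom exits lying in the same column. Your framework accommodates this fix cleanly; only the specific claim about the top exit's type must be replaced.
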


\begin{lemma}[Lemma 3.5 in \cite{laby_oigemoan}]\label{lemma:ems35}
Let $M=(m_{i,j})_{i,j\in {\cal J}}$ be the path matrix of a blocked labyrinth
pattern.  Then 
\begin{align*}
m_{\C,\A}\ge 1 &\text{ or }\; m_{\E,\A}\ge 1 \,,\\
m_{\D,\A}\ge 1 &\text{ or }\; m_{\F,\A}\ge 1 \,,\\
m_{\C,\B}\ge 1 &\text{ or }\; m_{\E,\B}\ge 1 \,,\\
m_{\D,\B}\ge 1 &\text{ or }\; m_{\F,\B}\ge 1 \,.
\end{align*}
\end{lemma}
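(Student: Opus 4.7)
The plan is to combine the tree-structure analysis afforded by Lemma~\ref{lemma: paths intersection pattern} with the guaranteed $\A$-square in the $\A$-path (and $\B$-square in the $\B$-path) from Lemma~\ref{lemma:ems34}, then handle the remaining degenerate configurations using the blocked hypothesis. I would prove the first inequality in detail; the other three follow by the reflection symmetries of the pattern together with the analogous reasoning applied to the $\B$-path.

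Following Lemma~\ref{lemma: paths intersection pattern}, I would decompose the $\A$-path as $P_1 \cup P_c \cup P_2$ and the $\B$-path as $Q_l \cup P_c \cup Q_r$, where $P_c = p(V_1, V_2)$ is the common subpath, and $P_1, P_2, Q_l, Q_r$ are the four arms connecting the branching vertices $V_1, V_2$ to the exits $W^{\etop}, W^{\ebottom}, W^{\eleft}, W^{\eright}$ respectively. In the tree, the $\C$-path (from $W^{\etop}$ to $W^{\eright}$) and the $\E$-path (from $W^{\ebottom}$ to $W^{\eleft}$) each traverse two of these four arms, plus possibly $P_c$, so that their union always contains $P_1 \cup P_2 \cup Q_l \cup Q_r$ and, in one of two topological sub-cases, also contains $P_c$. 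If the $\A$-square of the $\A$-path guaranteed by Lemma~\ref{lemma:ems34} lies in the interior of $P_1$ or $P_2$, then its tree-neighbours coincide with its path-neighbours in the $\C$- or $\E$-path respectively, and it remains an $\A$-square there; the analogous conclusion holds if the $\A$-square lies in the interior of $P_c$ in the sub-case where $P_c$ is contained in the $\C$-path or the $\E$-path.

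The obstacle is the residual configuration in which every $\A$-square of the $\A$-path lies in the interior of $P_c$ and we are in the sub-case where $P_c$ is disjoint from both the $\C$- and $\E$-paths. Here I would examine the branching vertex $V_1$, which has at least three tree-neighbours (one each towards $W^{\etop}, W^{\ebottom}, W^{\eright}$), so that three of its four cardinal sides are occupied; by pigeonhole at least one opposite pair of sides (top-bottom or left-right) is present. A short case analysis shows that when the available opposite pair is vertical, $V_1$ itself becomes an $\A$-square in the $\C$-path. If only a horizontal opposite pair is available at $V_1$, I would invoke the vertically blocked hypothesis: the forced detour of the $\A$-path around the black square in column $c$ must produce at least one vertical passage outside $P_c$, that is, an $\A$-square in $P_1$ or $P_2$, contradicting the standing assumption. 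This degenerate case is where I expect the main difficulty; the tight geometric inspection required is the heart of the argument in~\cite{laby_oigemoan}.

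The three remaining inequalities follow by symmetric arguments. For the second, the roles of $(\C, \E)$ and $(\D, \F)$ are exchanged, so that the topology that made the first inequality degenerate makes the second immediate, and vice versa. For the third and fourth, the same reasoning applies with the $\B$-path and its guaranteed $\B$-square in place of the $\A$-path and its guaranteed $\A$-square, using the horizontally blocked hypothesis instead of the vertically blocked one.
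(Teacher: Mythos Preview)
The paper does not prove this lemma at all: it is quoted verbatim as ``Lemma~3.5 in \cite{laby_oigemoan}'' and used as a black box, so there is no proof here to compare your proposal against. What follows is therefore an assessment of your sketch on its own merits.

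Your tree-decomposition strategy is sound up to the point where the guaranteed $\A$-square lies in the interior of one of the arms $P_1,P_2$ (or in $P_c$ in the sub-case where $P_c\subseteq$ $\C$-path $\cup$ $\E$-path). The genuine gap is in your handling of the residual configuration (sub-case~B, all $\A$-squares of the $\A$-path lying in $P_c$). Your pigeonhole observation at $V_1$ is correct---three of the four sides are occupied, hence some opposite pair is present---but the conclusion ``when the available opposite pair is vertical, $V_1$ itself becomes an $\A$-square in the $\C$-path'' does not follow. The $\C$-path through $V_1$ uses precisely the $P_1$-neighbour and the $Q_r$-neighbour; nothing prevents the vertical pair (top and bottom) from being occupied by, say, the $P_1$-neighbour and the $P_c$-neighbour, with the $Q_r$-neighbour sitting on a lateral side. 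Then $V_1$ is of type $\C$, $\D$, $\E$ or $\F$ in the $\C$-path, not of type~$\A$.

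Your fallback via the vertically blocked hypothesis is also not complete as stated: the detour of the $\A$-path around the black square in column~$c$ certainly forces extra squares, but it does not obviously force an $\A$-square \emph{outside} $P_c$; the vertical steps created by the detour could all lie inside $P_c$. You would need a sharper geometric argument tying the position of the blocked column to the location of $V_1,V_2$ and the arms. You yourself flag this as ``where I expect the main difficulty'', and indeed it is: as written, the degenerate case is not closed, and the proposal is a plausible outline rather than a proof.
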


Since any labyrinth set of level $n\ge 1$ can be viewed as a labyrinth pattern,
the above lemma also holds for any (blocked) labyrinth set of some level $n\ge 1$.

\section{Arcs of infinite length: the mixed case }\label{sec:arcs_mixed}

In this section, we prove the following theorem, which is a special case of 
Conjecture \ref{conj:infinite2} and which  solves the conjecture on mixed 
labyrinth fractals formulated in recent work \cite{cristealeobacher_arcs}.

\begin{theorem}
\label{theorem:solve_conj_mixlaby}
Let $\{{\mathcal A}_{k}\}_{k\ge 1}$ be a sequence of horizontally and vertically blocked labyrinth patterns, such that the corresponding sequence of widths $\{m_k \}_{k\ge 1}$ satisfies the condition $\sum_{k\ge 1} \frac{1}{m_k}= \infty.$ Then, for all $x,y \in L_{\infty}$ with $x \ne y$ the arc in $L_{\infty}$ that connects $x$ and $y$ has infinite length.
\end{theorem}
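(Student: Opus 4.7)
My plan is to reduce the theorem to the claim that, in every mixed labyrinth fractal generated by a sequence of horizontally and vertically blocked patterns with $\sum 1/m_k=\infty$, the top-to-bottom exit arc $a(t_\infty,b_\infty)$ has infinite length. Given two distinct points $x,y\in L_\infty$, for $n$ large enough that $W_n(x)\ne W_n(y)$ I pick an interior square $W$ on the unique path in $\mathcal{G}(\mathcal{W}_n)$ from $W_n(x)$ to $W_n(y)$; by Proposition~\ref{lemma:ArcSimilarity1}, $W\cap a(x,y)$ is (after scaling by $m(n)$) an exit-to-exit arc in the labyrinth fractal generated by the tail $(\mathcal{A}_{n+k})_{k\ge1}$, which inherits both the blocked property and the divergence of the reciprocal-width series. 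A $90^\circ$ rotation of the sequence yields another fractal in the same class whose $A$-arc coincides with the original's $B$-arc, and repeated descent together with Proposition~\ref{lemma:sums_of_arclengths} handles the four corner-type sub-arcs, so the top-bottom case suffices. Now Proposition~\ref{lemma:length_of_parametrisation}, applied to the $A$-path of $\mathcal{G}(\mathcal{W}_n)$, gives
\[
\ell\bigl(a(t_\infty,b_\infty)\bigr)\ge\frac{\mathcal{A}(n)-1}{2\,m(n)}\qquad(n\ge 1),
\]
so the whole problem reduces to showing $\limsup_{n\to\infty}\mathcal{A}(n)/m(n)=\infty$.

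\textbf{Growth of the path length.} In the mixed case Theorem~\ref{theo:basisformel} collapses to $M(n+1)=M(n)\cdot M_{n+1}$, giving
\[
\mathcal{A}(n+1)=\sum_{i\in\mathcal{J}}m^{(n)}_{A,i}\,P_i(\mathcal{A}_{n+1}).
\]
For a horizontally and vertically blocked pattern $\mathcal{A}_{n+1}$ the central column and row each contain at least one black square, forcing $P_A(\mathcal{A}_{n+1}),P_B(\mathcal{A}_{n+1})\ge m_{n+1}+2$ while $P_C,P_D,P_E,P_F\ge 1$. The plan is to combine these bounds with Lemmas~\ref{lemma:ems33}--\ref{lemma:ems35} in order to establish a multiplicative amplification
\[
\frac{\mathcal{A}(n+1)}{m(n+1)}\ge\Bigl(1+\frac{c}{m_{n+1}}\Bigr)\frac{\mathcal{A}(n)}{m(n)}
\]
for a universal constant $c>0$. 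Iterating yields $\mathcal{A}(n)/m(n)\gtrsim\prod_{k=2}^{n}(1+c/m_k)$, and the hypothesis $\sum 1/m_k=\infty$ forces this product to diverge, completing the argument.

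\textbf{Main obstacle.} The hard step is to actually secure the factor $c$. Writing $\alpha_n:=m^{(n)}_{A,A}+m^{(n)}_{A,B}$, the direct estimate $\mathcal{A}(n+1)\ge\alpha_n(m_{n+1}+2)+(\mathcal{A}(n)-\alpha_n)$ only delivers, after dividing by $m(n+1)$, an additive correction of order $\alpha_n/m(n)$ on top of $\mathcal{A}(n)/(m(n)\,m_{n+1})$, and this collapses unless $\alpha_n$ stays proportional to $\mathcal{A}(n)$. Securing that calls for a coupled induction on the pair $(\mathcal{A}(n)/m(n),\alpha_n/m(n))$, invoking Lemma~\ref{lemma:ems34} to guarantee that $A$- and $B$-squares reliably regenerate at every level (via $m^{n+1}_{A,A},m^{n+1}_{B,B}\ge 1$) and Lemma~\ref{lemma:ems35} to prevent the $A,B$ mass from leaking entirely into the corner types $C,D,E,F$. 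This is essentially a Perron--Frobenius-style argument that depends in a crucial way on the product structure $M(n)=M_1\cdots M_n$; in the supermixed setting Theorem~\ref{theo:basisformel} replaces this product by the sum $\sum_h Q_{n,h}M_{n+1,h}$, so no single effective ``next-step'' matrix is available to iterate, which is precisely what prevents the same method from settling Conjecture~\ref{conj:infinite2}.
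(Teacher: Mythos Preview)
Your overall architecture matches the paper's: reduce to showing $\mathcal{A}(n)/m(n)\to\infty$, derive this from a multiplicative amplification at each step, and conclude via $\prod_k(1+\kappa/m_k)=\infty$. The reductions in your first paragraph are essentially those the paper uses.

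But the amplification inequality is the entire content of the theorem, and you have not proved it---your own ``Main obstacle'' paragraph already names the reason. The recursion $\mathcal{A}(n+1)\ge\alpha_n(m_{n+1}+2)+(\mathcal{A}(n)-\alpha_n)$ is too weak, and the two-scalar coupled induction on $(\mathcal{A}(n)/m(n),\alpha_n/m(n))$ that you propose does not close: the new count $\alpha_{n+1}$ depends on the full type decomposition of the level-$n$ path (how many $C$-, $D$-, $E$-, $F$-squares it contains individually), not merely on $\alpha_n$. Lemmas~\ref{lemma:ems34}--\ref{lemma:ems35} only guarantee that \emph{some} $A$- or $B$-square appears in a $C$-path \emph{or} in an $E$-path, never in both simultaneously, so one cannot extract from them a uniform lower bound of the shape $\alpha_n\ge c\,\mathcal{A}(n)$.

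What the paper does instead is keep the full vector of path lengths. One passes to the $4\times 4$ reduced path matrices $\overline M_k$ (Lemma~\ref{lemma:reduced_path_matrix}, Corollary~\ref{corollary:reduced-product}) and exhibits a single positive vector $v=(1,1,1+c,1+c)^\top$, with $c=(\sqrt{17}-3)/4$, that is a \emph{common} sub-eigenvector for every blocked pattern of every width: $\overline M\,v\ge m(1+\kappa/m)\,v$ elementwise with $\kappa=2c$ (Proposition~\ref{prop:basic_ineq_virtual_matrix} and Lemma~\ref{lemma:exists_c}). Iterating along $\overline{M(n)}=\overline M_1\cdots\overline M_n$ gives Lemma~\ref{lemma:infinite_paths}. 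The proof of the sub-eigenvector inequality is the missing idea and is genuinely geometric: one orients the path, assigns to each oriented square type a displacement vector in $\mathbb{R}^2$, chooses the $\ell^p$-norm with $p=1/\log_2(1+c)$ so that $\|(1,1)\|_p=1+c$, and uses the triangle inequality together with the blocking hypothesis to bound each entry of $\overline M\,v$ from below. A Perron--Frobenius heuristic alone does not supply this, precisely because the matrices $\overline M_k$ vary with $k$ and one needs a vector that works uniformly for all of them at once.
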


For this we recall the concept of a reduced path matrix of a labyrinth pattern (or set) as defined in 
\cite{laby_oigemoan}.

Let ${\cal A}\subset {\mathcal S}_m$ be a labyrinth pattern and $M$ its path
matrix. Then the reduced path matrix $\overline{M}$ of ${\cal A}$ is a $4
\times 4$ matrix, obtained from $M$ as follows: first, we add the fifth row to
the third row of $M$, and then we add the sixth row to the fourth row of $M$
und thus obtain a $4 \times 6$ matrix, from which we delete the last two
columns to obtain $\overline{M}.$ Since any labyrinth set of some
level $n$ (mixed or supermixed or generated by only one labyrinth pattern) can
be viewed as a labyrinth pattern, the definition of a reduced path matrix works
for any  labyrinth set of some level $n\ge 1$. According to the way it is
constructed, the rows of a reduced path matrix are indexed by $\A$, $\B$,
$\CE$, $\DF$ (in this order), and its columns are indexed by $\A$, $\B$, $\C$,
$\D$ (in this order).

\begin{lemma}\label{lemma:reduced_path_matrix}
Let $M_1,M_2$ be two path matrices of labyrinth patterns. Then
\[
\overline{M_1 \cdot M_2}=\overline M_1 \cdot \overline M_2.
\]
\end{lemma}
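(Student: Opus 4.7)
The plan is to realise the map $M\mapsto\overline{M}$ as a two-sided multiplication by fixed matrices independent of $M$. Namely, there exist $P\in\mathbb{Z}^{4\times 6}$, which performs ``add row $\E$ to row $\C$ and row $\F$ to row $\D$, then retain only rows $\A,\B,\CE,\DF$'', and $Q\in\mathbb{Z}^{6\times 4}$, which selects columns $\A,\B,\C,\D$, such that $\overline{M}=PMQ$ for every $6\times 6$ path matrix $M$. Under this identification,
\[
\overline{M_1 M_2} = PM_1 M_2 Q, \qquad \overline{M_1}\cdot\overline{M_2} = (PM_1Q)(PM_2Q) = PM_1\,(QP)\,M_2 Q,
\]
so the desired identity reduces to $PM_1\,(I-QP)\,M_2Q=0$, for which it suffices to show $PM_1\,(I-QP)=0$.

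A direct computation shows that $I-QP$ vanishes on columns $\A,\B,\C,\D$, while its column indexed by $\E$ has a $-1$ in row $\C$ and a $+1$ in row $\E$, and its column indexed by $\F$ has a $-1$ in row $\D$ and a $+1$ in row $\F$. Hence $PM_1\,(I-QP)=0$ breaks into exactly two scalar identities per row of $PM_1$: for each row, the $\E$-column entry must equal the $\C$-column entry, and the $\F$-column entry must equal the $\D$-column entry. Writing this out for the four rows of $PM_1$ (rows $\A$ and $\B$ of $M_1$, and the sums of rows $\C+\E$ and $\D+\F$ of $M_1$) yields precisely the eight equalities comprising parts (a), (b), (c), (d) of Lemma~\ref{lemma:ems33}.

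The proof thus collapses to an application of Lemma~\ref{lemma:ems33}. The main obstacle, such as it is, is purely book-keeping: one must confirm the explicit shape of $I-QP$ and match each of the eight resulting scalar constraints to the correct line of Lemma~\ref{lemma:ems33} (parts (a), (b) for the unchanged rows $\A,\B$; parts (c), (d) for the summed rows $\C+\E,\D+\F$). Since any labyrinth set of level $n\ge 1$ can itself be viewed as a labyrinth pattern, the same argument applies verbatim to path matrices of blocked labyrinth sets, which is the form in which the identity will be needed later in Section~\ref{sec:arcs_mixed}.
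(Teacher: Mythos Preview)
Your proof is correct and aligns closely with the paper's own arguments. The paper gives two proofs: the main one checks the identity \eqref{eq:reduced_path_matrix} entry by entry (reducing by symmetry to four representative index pairs) using Lemma~\ref{lemma:ems33}, and the subsequent Remark introduces exactly your matrices $P=P_\ell$ and $Q=P_r$, writes $\overline{M}=P_\ell M P_r$, and then suggests verifying $P_\ell M_1 M_2 P_r=P_\ell M_1 P_r P_\ell M_2 P_r$ ``for example using a computer algebra system''. Your argument is effectively a clean synthesis of the two: you adopt the matrix factorisation from the Remark, but instead of invoking a computer check you compute $I-QP$ explicitly and show that $PM_1(I-QP)=0$ unpacks precisely into the eight equalities of Lemma~\ref{lemma:ems33}(a)--(d). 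This is arguably tidier than either version in the paper, since it makes transparent \emph{why} the factorisation identity holds without a case-by-case index chase and without an external verification.

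One small caveat worth noting (present in the paper as well): Lemma~\ref{lemma:ems33} is stated for \emph{blocked} patterns, whereas Lemma~\ref{lemma:reduced_path_matrix} is stated for arbitrary labyrinth patterns. The symmetry relations of Lemma~\ref{lemma:ems33} in fact hold for any labyrinth pattern (they express nothing more than reflection symmetry of the six exit-to-exit paths), so this is a harmless over-restriction in the hypothesis of Lemma~\ref{lemma:ems33} rather than a gap in your argument; but since you explicitly invoke that lemma, you may want to flag this.
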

\begin{proof}
In order to prove this, we use the definitions of the path matrix and the
reduced path matrix of a labyrinth pattern. Therefor, let
$M_1=(m_{i,j}^1)_{i,j \in {\mathcal J}}$ and $M_2=(m_{i,j}^2)_{i,j \in
{\mathcal J}}$ be path matrices of labyrinth patterns, $\overline M_1=(\overline
m_{i,j}^1)_{i,j \in {\overline {\mathcal J}\times {\mathcal J'}}}$, $\overline M_2=(\overline
m_{i,j}^2)_{i,j \in {\overline {\mathcal J}\times {\mathcal J'}}}$  and 
$\overline{M_1\cdot M_2}
=(\overline{m}_{i,j})_{i,j \in {\overline {\mathcal J}\times {\mathcal J'}}}$, where
$\overline {\mathcal J}=\{\A,\B, \CE, \DF  \}$ and ${\mathcal J'}=\{\A,\B, \C, \D
\}$ are sets of indices. The idea is to prove that for all $(i,j) \in  \overline
{\mathcal J}\times {\mathcal J'}$  
\begin{equation}\label{eq:reduced_path_matrix}
\overline m_{i,j}= \sum_{(r,r')\in \{ (\A,\A),(\B,\B), (\C, \CE), (\D, \DF)\} }\overline m^1_{i,r} \cdot \overline m^2_{r',j}.
\end{equation}
Due to symmetry, it is enough to consider four cases corresponding, e.g., to $(i,j)\in \{ (\A,\A), (\A, \C), (\CE, \A), (\CE, \C) \}$. 
For all these cases \eqref{eq:reduced_path_matrix}
is obtained from the definition of products of matrices,
by applying the results from Lemma \ref{lemma:ems33}.  
\end{proof}

\begin{remark} The obove result can also be proven by using the following arguments.
From Lemma \ref{lemma:ems33} 
we know that any path matrix $M$ is of the form
\[   
M=
\begin{pmatrix}
m_{11}& m_{12}& m_{13}& m_{14}& m_{13}& m_{14} \\
m_{21}& m_{22}& m_{23}& m_{24}& m_{23}& m_{24} \\
m_{31}& m_{32}& m_{33}& m_{34}& m_{33}+ a& m_{34}+b\\ 
m_{41}& m_{42}& m_{43}& m_{44}& m_{43}+ c& m_{44}+d  \\
m_{51}& m_{52}& m_{53}& m_{54}& m_{53} - a& m_{54}-b \\
m_{61}& m_{62}& m_{63}& m_{64}& m_{63}-c & m_{64}-d 
\end{pmatrix}\,,
\]
with integers $m_{i,j}$ $i,j\in \{1,\dots,6\}$ and $a,b,c,d$.
Further note from the definition of the reduced path matrix that 
$\overline M=P_\ell M P_r$ with 
\begin{align*}
P_\ell:=
\begin{pmatrix}
1&0&0&0&0&0\\
0&1&0&0&0&0\\
0&0&1&0&1&0 \\
0&0&0&1&0&1
\end{pmatrix}\;,\quad
P_r:=
\begin{pmatrix}
1&0&0&0\\
0&1&0&0\\
0&0&1&0 \\
0&0&0&1 \\
0&0&0&0 \\
0&0&0&0 
\end{pmatrix}\,.\end{align*}

Now it is straightforward to verify (for example using a computer
algebra system) that 
$P_\ell M_1 M_2 P_r=P_\ell M_1 P_r P_\ell M_2 P_r$
for any two matrices $M_1,M_2$ having this form.
\end{remark}
Lemma \ref{lemma:reduced_path_matrix} yields a simple formula for the reduced path matrix of mixed
labyrinth sets:
\begin{corollary}\label{corollary:reduced-product}
The reduced path matrix of a mixed labyrinth set of level $n$ is the product of the reduced path matrices of the patterns that define it, i.e., 
\begin{equation}\label{formi:reduced path matrices}
\overline{M(n)}= \prod_{k=1}^n \overline M_k,
\end{equation}
where $M(n)=\prod_{k=1}^n  M_k$, and $M_k$ is the path matrix of the pattern ${\cal A}_k$ that defines the $k$-th step of the construction, $k \ge 1$.
\end{corollary}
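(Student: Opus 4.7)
The plan is to derive the corollary by a straightforward induction on $n$, using Lemma \ref{lemma:reduced_path_matrix} as the key engine and the factorisation $M(n)=\prod_{k=1}^n M_k$ that holds in the mixed case as the starting point.

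First I would record what Theorem \ref{theo:basisformel} specialises to when $s_k=1$ for all $k$, as noted in the remark following that theorem: one has $M(n+1)=M(n)\cdot M_{n+1}$, and hence iterating gives $M(n)=M_1\cdot M_2\cdots M_n$. The base case $n=1$ of the corollary is then trivial, since $\overline{M(1)}=\overline{M_1}$.

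For the inductive step, assume $\overline{M(n)}=\prod_{k=1}^n \overline{M_k}$. Because a labyrinth set of level $n$ is itself a labyrinth pattern by Proposition \ref{proposition:prop123}, its path matrix $M(n)$ is a path matrix of a labyrinth pattern in the sense required by Lemma \ref{lemma:reduced_path_matrix}. Applying that lemma to the product $M(n)\cdot M_{n+1}$ yields
\[
\overline{M(n+1)} \;=\; \overline{M(n)\cdot M_{n+1}} \;=\; \overline{M(n)}\cdot \overline{M_{n+1}} \;=\; \prod_{k=1}^{n+1}\overline{M_k},
\]
which completes the induction.

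There is no real obstacle here: the whole content sits inside Lemma \ref{lemma:reduced_path_matrix}, and the corollary is just the iterated form of that lemma combined with the well-known mixed-case identity $M(n)=\prod_{k=1}^n M_k$. The only point to be alert to is that one needs the intermediate factors $M(n)$ to qualify as path matrices of labyrinth patterns, which is precisely what Proposition \ref{proposition:prop123} guarantees.
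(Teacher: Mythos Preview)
Your proposal is correct and follows exactly the route the paper intends: the paper states the corollary as an immediate consequence of Lemma~\ref{lemma:reduced_path_matrix} without writing out a proof, and your induction, together with the observation that Proposition~\ref{proposition:prop123} makes each $M(n)$ a genuine path matrix of a labyrinth pattern, is precisely the spelled-out version of that one-line deduction.
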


For any integer $m\ge 4$, 
we define the {\em virtual reduced path matrix} 
\[
L(m):=
\begin{pmatrix}
m-2&      0&      1&     1\\
    0&  m-2&      1&     1\\
    1&       1&  m-1&  0 \\
    1&       1&      0&m-1
\end{pmatrix}\,.
\]

We use the attribute ``virtual'' because there need not exist a pattern to
which a given virtual reduced path matrix $L(m)$ corresponds.
The significance of these matrices is that they provide us with lower bounds
on ``real'' reduced path matrices.

\begin{proposition}
\label{prop:basic_ineq_virtual_matrix}
Let $M$ be the matrix of a horizontally and vertically blocked labyrinth
pattern of width $m\ge 4$ and $L(m)$ the virtual reduced path matrix with
parameter $m$. Furthermore, let $0<c<1$.  

Then we have, elementwise,
\begin{equation}
\label{eq:Mbar_inequality}
\overline M(1,1,1+c,1+c)^{\etop}\ge L(m)(1,1,1+c,1+c)^{\etop}.
\end{equation}
\end{proposition}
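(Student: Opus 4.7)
The plan is to verify the stated vector inequality componentwise, examining each of the four rows of $\overline M$ and $L(m)$ in turn. Since the entries of $L(m)$ encode a ``minimal blocked detour'' configuration, each required bound should arise by combining a displacement-based lower bound on path length with a corner count supplied by one of Lemmas~\ref{lemma:ems33}--\ref{lemma:ems35}.

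For the row indexed by $\A$, I would first use Lemma~\ref{lemma:ems33}(a) to write $\A(n)=m_{\A,\A}+m_{\A,\B}+2(m_{\A,\C}+m_{\A,\D})$. Counting vertical half-edges contributed by each square type ($2$ from each $\A$-square, $1$ from each of $\C,\D,\E,\F$) and subtracting the two virtual vertical half-edges at the top and bottom exits shows that the number of real vertical edges in the $\A$-path equals $m_{\A,\A}+m_{\A,\C}+m_{\A,\D}-1$; since this must cover the vertical displacement $m-1$, one obtains
\[
m_{\A,\A}+m_{\A,\C}+m_{\A,\D}\ge m.
\]
Lemma~\ref{lemma:ems34} supplies $m_{\A,\C}\ge 1$ and $m_{\A,\D}\ge 1$, so $m_{\A,\C}+m_{\A,\D}\ge 2$. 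Combining these two estimates via
\[
m_{\A,\A}+m_{\A,\B}+(1+c)(m_{\A,\C}+m_{\A,\D})=\bigl[m_{\A,\A}+m_{\A,\C}+m_{\A,\D}\bigr]+m_{\A,\B}+c(m_{\A,\C}+m_{\A,\D})\ge m+2c
\]
yields the required first entry of $L(m)(1,1,1+c,1+c)^{\etop}$. The row indexed by $\B$ follows by the symmetric argument applied to the $\B$-path, with horizontal blocking and Lemma~\ref{lemma:ems33}(b) in place of their vertical counterparts.

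For the row indexed by $\CE$, the individual entries of $L(m)$ do not in general dominate those of $\overline M$ entrywise, so a more subtle argument is needed. I would set
\[
A=(m_{\C,\A}+m_{\E,\A})+(m_{\C,\B}+m_{\E,\B}),\qquad B=(m_{\C,\C}+m_{\E,\C})+(m_{\C,\D}+m_{\E,\D}),
\]
use Lemma~\ref{lemma:ems33}(c) to identify $\C(n)+\E(n)=A+2B$, and then deduce from the Manhattan-distance lower bounds on the $\C$-path from $(c_k,m)$ to $(m,r_k)$ and on the $\E$-path from $(c_k,1)$ to $(1,r_k)$ the estimate
\[
\C(n)+\E(n)\ge(2m-c_k-r_k+1)+(c_k+r_k-1)=2m.
\]
Lemma~\ref{lemma:ems35} gives $A\ge 2$. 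A short case split on $B$ then closes the argument: if $B\ge m-1$ then $A+(1+c)B\ge 2+(1+c)(m-1)=m+1+(m-1)c$; if $B\le m-1$ then
\[
A+(1+c)B=(A+2B)-(1-c)B\ge 2m-(1-c)(m-1)=m+1+(m-1)c.
\]
The row indexed by $\DF$ is handled by the same argument with the $\D$- and $\F$-paths replacing the $\C$- and $\E$-paths and Lemma~\ref{lemma:ems33}(d) in place of Lemma~\ref{lemma:ems33}(c).

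The main obstacle is rows $\CE$ and $\DF$: because $L(m)$ concentrates its weight on a single column, no entrywise domination of $\overline M$ over $L(m)$ holds in general, and one is forced to balance two complementary estimates. The length bound $\C(n)+\E(n)\ge 2m$ is purely geometric and is not sharpened by the blocking hypothesis, while $A\ge 2$ uses blocking through Lemma~\ref{lemma:ems35}; the case split at $B=m-1$ is precisely what makes these two estimates meet at the desired threshold $m+1+(m-1)c$.
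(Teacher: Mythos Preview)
Your argument is correct and takes a genuinely different route from the paper. The paper assigns to each (oriented) square type a displacement vector $v(s)\in\{(0,\pm 1),(\pm 1,0),(\pm\tfrac12,\pm\tfrac12)\}$, chooses $p=1/\log_2(1+c)$ so that $\|(\tfrac12,\tfrac12)\|_p=\tfrac{1+c}{2}$, identifies each entry of $\overline M(1,1,1+c,1+c)^\top$ with $\sum_k\|v(s_k)\|_p$ summed along the relevant path (for the $\CE$-row it concatenates the $\C$- and $\E$-paths in adjacent copies of the pattern into a single path with total displacement $(m,-m)$), and then applies the triangle inequality for $\|\cdot\|_p$ after singling out a few special squares supplied by Lemmas~\ref{lemma:ems34} and~\ref{lemma:ems35}. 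Your approach avoids norms altogether: the $\A$-row reduces to a vertical half-edge count that pins the number of vertical edges to the vertical span $m-1$, and the $\CE$-row is handled by the Manhattan bound $A+2B\ge 2m$ together with the case split at $B=m-1$. Your argument is more elementary and makes the role of the hypothesis $0<c<1$ completely transparent (it enters only through the sign of $1-c$ in the second case), while the paper's $p$-norm device is more uniform across the four rows and packages the displacement and corner-count information into a single inequality template.
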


\begin{proof}
Due to symmetry it is enough to prove the inequalities for the first and third entry of the vectors resulting from this multiplication.

We introduce some temporary notation: for squares
along a path from one exit to another one 
we discriminate between the directions in which the square may be passed.
Thus we introduce the corresponding ``oriented'' types of squares
$\Aou,\Auo,\Blr,\Brl,\Cor,\Cro,\Dru,\Dur,\Eul,\Elu,\Flo,\Fol$,
and we write $m_{\Aou,\Aou}$ for the number of $\Aou$-squares in the path from 
top to bottom and similar for the other symbols.

Given a path from the top exit to the bottom exit, i.e., a path of type 
\Aou{}, we count the directed square types along the path.
Starting at the top exit, each square of type 
\Aou{} displaces
us by one unit down, zero units to the left or right.
Each square of type \Cor{} displaces
us by 1/2 unit down, 1/2 units to the right. Whatever the shape of the path,
finally we must have a total displacement by $m$ units down and zero units to the left or right. In other words, 
we may assign a 2-dimensional vector to each of the square types, 
where $v(\Aou{})=(0,-1)$,
$v(\Cor{})=(\frac{1}{2},-\frac{1}{2}) $ and so on. 
Of course, this assignment is not injective. 

Let $p=1/\log_2(1+c)$, 
and define $||(x,y)||_p:=(|x|^p+|y|^p)^{1/p}$, for every 
vector with real entries $(x,y)$.
Note that for this choice of $p$  we have $\|(1,1)\|_p=1+c$.
\\

Consider a path from top to bottom, consisting of the squares $s_1,\ldots,s_n$. 
Since the total displacement from
top to bottom is $(0,-m)$, we need to have 
$\sum_{k=1}^n v(s_k)=(0,-m)$. 

The first entry of $\overline{M}(1,1,1+c,1+c)^\top$ equals 
\begin{align*}
\lefteqn{m_{\A,\A}+m_{\A,\B}+(1+c)(m_{\A,\C}+m_{\A,\D})}\\
&= m_{\A,\A}+m_{\A,\B}+\frac{1}{2}(1+c)(m_{\A,\C}+m_{\A,\D}+m_{\A,\E}+m_{\A,\F})\\
&= m_{\Aou,\Aou}+m_{\Aou,\Blr}+\frac{1}{2}(1+c)(m_{\Aou,\Cor}+m_{\Aou,\Dru}+m_{\Aou,\Elu}+m_{\Aou,\Fol})\\
&\quad+m_{\Aou,\Auo}+m_{\Aou,\Brl}+\frac{1}{2}(1+c)(m_{\Aou,\Cro}+m_{\Aou,\Dur}+m_{\Aou,\Eul}+m_{\Aou,\Flo})\\
&=\sum_{k=1}^n\|v(s_k)\|_p\,.
\end{align*}
(Here we have used that $m_{\A,\C}=m_{\A,\E}$ and $m_{\A,\D}=m_{\A,\F}$, from Lemma \ref{lemma:ems33}.)

From Lemma \ref{lemma:ems34} we know that the path of type $\A$ in a blocked
labyrinth pattern (or set) has to contain at least one square of 
type $\C$. Now, since the path is ``oriented'', it 
induces an orientation on those squares, so in the path we have 
at least one occurence of $\Cor$ or of $\Cro$. 
That is, there exists $k_1\in \{1,\ldots,n\}$ such that 
$\ty(s_{k_1})\in\{\Cor,\Cro\}$.
In the same way, there exist $k_2,k_3,k_3\in \{1,\ldots,n\}$
such that 
$\ty(s_{k_2})\in\{\Dur,\Dru\}$,
$\ty(s_{k_3})\in\{\Elu,\Eul\}$,
$\ty(s_{k_4})\in\{\Flo,\Fol\}$.

Note that $\sum_{k=1}^nv(s_k)=(0,-m)$ and that 
$\sum_{k\in\{k_1,k_2,k_3,k_4\}}\|v(s_{k})\|_p=2(1+c)$.
Furthermore, it is straightforward to check that 
\[
\sum_{k\in\{k_1,k_2,k_3,k_4\}}v(s_{k})\in 
\{(0,0),\pm(0,2),\pm(2,0),\pm(1,1),\pm(1,-1)\}
\]
and thus 
\(
\Big\|\sum_{k\in\{k_1,k_2,k_3,k_4\}}v(s_{k})\Big\|_p \le 2\,.
\)
By the triangle inequality for the $p$-norm,
\begin{align*}
\sum_{k=1}^n\|v(s_k)\|_p
&\ge\sum_{k\in\{k_1,k_2,k_3,k_4\}}\|v(s_{k})\|_p+\Big\|\sum_{k\notin\{k_1,k_2,k_3,k_4\}}v(s_k)\Big\|_p\\
&\ge 2(1+c)+\Big\|(0,-m)-\sum_{k\in\{k_1,k_2,k_3,k_4\}}v(s_k)\Big\|_p\\
&\ge 2(1+c)+\|(0,-m)\|-\Big\|\sum_{k\in\{k_1,k_2,k_3,k_4\}}v(s_k)\Big\|_p\\
&\ge 2(1+c)+m-2=m+2c\,.
\end{align*}
But the last expression is just the first entry of $L(m)(1,1,1+c,1+c)^\top$.

By symmetry, we get the inequality for the second entry.
\\

Next we consider the concatenation of a path from top to right with that
from left to bottom. An example of such a concatenation is illustrated in Figure \ref{fig:concatenation_tr_rb}.

\begin{figure}[h!]
\begin{center}
\begin{tikzpicture}[scale=.30]

\draw[line width=2.2pt] (0,0) rectangle (32,16);

\filldraw[fill=gray!50] (4,14) rectangle (6,16);
\filldraw[fill=gray!50] (6,10) rectangle (8,16);
\filldraw[fill=gray!50] (8,6) rectangle (10,12);
\filldraw[fill=gray!50] (10,6) rectangle (14,8);
\filldraw[fill=gray!50] (12,8) rectangle (16,10);
\filldraw[fill=gray!50] (16,8) rectangle (20,10);
\filldraw[fill=gray!50] (18,6) rectangle (22,8);
\filldraw[fill=gray!50] (20,0) rectangle (22,6);

\draw[line width=1.2pt] (2, 0) -- (2,16);
\draw[line width=1.2pt] (4, 0) -- (4,16);
\draw[line width=1.2pt] (6, 0) -- (6,16);
\draw[line width=1.2pt] (8, 0) -- (8,16);
\draw[line width=1.2pt] (10, 0) -- (10,16);
\draw[line width=1.2pt] (12, 0) -- (12,16);
\draw[line width=1.2pt] (14, 0) -- (14,16);
\draw[line width=1.2pt] (16, 0) -- (16,16);
\draw[line width=1.2pt] (18, 0) -- (18,16);
\draw[line width=1.2pt] (20, 0) -- (20,16);
\draw[line width=1.2pt] (22, 0) -- (22,16);
\draw[line width=1.2pt] (24, 0) -- (24,16);
\draw[line width=1.2pt] (26, 0) -- (26,16);
\draw[line width=1.2pt] (28, 0) -- (28,16);
\draw[line width=1.2pt] (30, 0) -- (30,16);

\draw[line width=1.2pt] (0, 2) -- (32,2);
\draw[line width=1.2pt] (0, 4) -- (32,4);
\draw[line width=1.2pt] (0, 6) -- (32,6);
\draw[line width=1.2pt] (0, 8) -- (32,8);
\draw[line width=1.2pt] (0, 10) -- (32,10);
\draw[line width=1.2pt] (0, 12) -- (32,12);
\draw[line width=1.2pt] (0, 14) -- (32,14);

\filldraw[fill=black, draw=black] (0,0) rectangle (2, 8);
\filldraw[fill=black, draw=black] (0,10) rectangle (2,16);
\filldraw[fill=black, draw=black] (2,0) rectangle (4,6);
\filldraw[fill=black, draw=black] (2,12) rectangle (4,16);
\filldraw[fill=black, draw=black] (4,8) rectangle (6,14);
\filldraw[fill=black, draw=black] (6,0) rectangle (8,4);
\filldraw[fill=black, draw=black] (6,6) rectangle (8,10 );
\filldraw[fill=black, draw=black] (8,0) rectangle (10,2 );
\filldraw[fill=black, draw=black] (8,12) rectangle (10,16 );
\filldraw[fill=black, draw=black] (10,0) rectangle (12, 2);
\filldraw[fill=black, draw=black] (10,4) rectangle (12,6 );
\filldraw[fill=black, draw=black] (10,8) rectangle (12,16 );
\filldraw[fill=black, draw=black] (12,0) rectangle (14,2 );
\filldraw[fill=black, draw=black] (12,4) rectangle (14, 6);
\filldraw[fill=black, draw=black] (12,14) rectangle (14,16 );
\filldraw[fill=black, draw=black] (14,0) rectangle (16,8);
\filldraw[fill=black, draw=black] (14,10) rectangle (16,16 );
\filldraw[fill=black, draw=black] (16,0) rectangle (18, 8);
\filldraw[fill=black, draw=black] (16,10) rectangle (18,16);
\filldraw[fill=black, draw=black] (18,0) rectangle (20,6);
\filldraw[fill=black, draw=black] (18,12) rectangle (20,16);
\filldraw[fill=black, draw=black] (20,8) rectangle (22,14);
\filldraw[fill=black, draw=black] (22,0) rectangle (24,4);
\filldraw[fill=black, draw=black] (22,6) rectangle (24,10 );
\filldraw[fill=black, draw=black] (24,0) rectangle (26,2 );
\filldraw[fill=black, draw=black] (24,12) rectangle (26,16 );
\filldraw[fill=black, draw=black] (26,0) rectangle (28, 2);
\filldraw[fill=black, draw=black] (26,4) rectangle (28,6 );
\filldraw[fill=black, draw=black] (26,8) rectangle (28,16 );
\filldraw[fill=black, draw=black] (28,0) rectangle (30,2 );
\filldraw[fill=black, draw=black] (28,4) rectangle (30, 6);
\filldraw[fill=black, draw=black] (28,14) rectangle (30,16 );
\filldraw[fill=black, draw=black] (30,0) rectangle (32,8);
\filldraw[fill=black, draw=black] (30,10) rectangle (32,16 );

\draw[line width=1pt, color=gray!20] (16,0) -- (16,16);
\draw[line width=1pt, color=gray!90] (16,8) -- (16,10);

\draw[line width=2.8pt] (5,16)--(5,14.85);
\draw[line width=2.8pt] (5,15)--(7.15,15);
\draw[line width=2.8pt] (7,15)--(7,10.85);
\draw[line width=2.8pt] (7,11)--(9.15,11);
\draw[line width=2.8pt] (9,11)--(9,6.85);
\draw[line width=2.8pt] (9,7)--(13.15,7);
\draw[line width=2.8pt] (13,7)--(13,9.15);
\draw[line width=2.8pt] (13,9)--(19.15,9);
\draw[line width=2.8pt] (19,9)--(19,6.85);
\draw[line width=2.8pt] (19,7)--(21.15,7);
\draw[line width=2.8pt] (21,7)--(21,0);

\end{tikzpicture}
\caption{The concatenation of paths in two adjacent copies of the pattern (labyrinth set of level 1): from top to right and from left to bottom}
\label{fig:concatenation_tr_rb}
\end{center}
\end{figure}

Let the combined path again consist of  ``directed squares''
$s_{k_1},\ldots,s_{k_n}$.  The third entry of $\overline{M}(1,1,1+c,1+c)^\top$
equals \begin{align*}
\lefteqn{m_{\CE,\A}+m_{\CE,\B}+(1+c)(m_{\CE,\C}+m_{\CE,\D})}\\
&= m_{\CE,\A}+m_{\CE,\B}+\frac{1}{2}(1+c)(m_{\CE,\C}+m_{\CE,\D}+m_{\CE,\E}+m_{\CE,\F})\\
&= m_{\CEou,\Aou}+m_{\CEou,\Blr}+\frac{1}{2}(1+c)(m_{\CEou,\Cor}+m_{\CEou,\Dru}+m_{\CEou,\Elu}+m_{\CEou,\Fol})\\
&\quad+m_{\CEou,\Auo}+m_{\CEou,\Brl}+\frac{1}{2}(1+c)(m_{\CEou,\Cro}+m_{\CEou,\Dur}+m_{\CEou,\Eul}+m_{\CEou,\Flo})\\
&=\sum_{k=1}^n\|v(s_k)\|_p\,.
\end{align*}

When we pass from the first to the second line in the above formul\ae, we apply the assertion 3 in Lemma \ref{lemma:ems33} c. 
By the facts in Lemma \ref{lemma:ems35}
 adapted to the case of ``oriented'' paths and squares,  it follows that at least one of the squares in a ``concatenated'' path $\CE$ is of type \Aou{} or
\Auo{}. Thus there exists $k_1\in\{1,\dots,n\}$ such that 
$\ty(s_{k_1})\in \{\Aou,\Auo\}$. In the same way there exists 
$k_1\in\{1,\dots,n\}$ such that 
$\ty(s_{k_2})\in \{\Brl,\Blr\}$.

Note that  
$\sum_{k=1}^nv(s_k)=(m,-m)$, such that $\big\|\sum_{k=1}^nv(s_k)\big\|_p=(1+c)m$, and that  $\|v(s_{k_1})\|_p+\|v(s_{k_2})\|_p=2$ and
$\|v(s_{k_1})+v(s_{k_2})\|_p=1+c$.

Thus, by the triangle inequality for the $p$-norm,
\begin{align*}
\sum_{k=1}^n\|v(s_k)\|_p
&\ge\|s_{k_1}\|_p+\|s_{k_2}\|_p+\Big\|\sum_{k\notin\{k_1,k_2\}}v(s_k)\Big\|_p\\
&= 2 +\Big\|\sum_{k=1}^n v(s_k)- v(s_{k_1})- v(s_{k_2})\Big\|_p\\
&\ge 2 +\Big\|\sum_{k=1}^n v(s_k)\Big\|_p- \|v(s_{k_1})+ v(s_{k_2})\|_p\\
&=2+(1+c)(m-1)\,.
\end{align*}
But the last expression equals the third entry of $L(m)(1,1,1+c,1+c)^\top$.

By symmetry, we get the inequality for the fourth entry.
This completes the proof.
\end{proof}

\begin{lemma}\label{lemma:exists_c}
For every $c\in (0,1)$  there exists
$\kappa>0$ such that
\[
L(m) (1,1,1+c,1+c)^\top
\ge m (1+\kappa/m)(1,1,1+c,1+c)^\top,
\]
elementwise, for all $m\ge 4$.
\end{lemma}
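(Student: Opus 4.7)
The plan is to prove this by direct computation. First I would compute the matrix-vector product $L(m)(1,1,1+c,1+c)^\top$ componentwise. By the symmetric structure of $L(m)$ (rows 1--2 are a symmetric pair, as are rows 3--4) only two distinct scalar calculations are needed. One finds that the first (and second) entry of $L(m)(1,1,1+c,1+c)^\top$ equals $m+2c$, while the third (and fourth) entry equals $2+(m-1)(1+c)$.

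Next I would rewrite the desired elementwise inequality $L(m)(1,1,1+c,1+c)^\top\ge (m+\kappa)(1,1,1+c,1+c)^\top$ as two scalar inequalities:
\begin{align*}
m+2c &\ge m+\kappa,\\
2+(m-1)(1+c) &\ge (m+\kappa)(1+c).
\end{align*}
The first reduces immediately to $\kappa\le 2c$. The second simplifies, after subtracting $(m-1)(1+c)$ from both sides and dividing by $1+c$, to the condition $\kappa\le (1-c)/(1+c)$. Crucially, neither bound depends on $m$.

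Therefore I would set
\[
\kappa:=\min\!\Bigl\{2c,\;\frac{1-c}{1+c}\Bigr\},
\]
which is strictly positive since $c\in(0,1)$. With this choice both scalar inequalities hold simultaneously for every $m\ge 4$, and hence the desired vector inequality $L(m)(1,1,1+c,1+c)^\top\ge m(1+\kappa/m)(1,1,1+c,1+c)^\top$ holds elementwise.

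There is essentially no obstacle here: the statement is a completely elementary linear-algebraic inequality about an explicit $4\times 4$ matrix acting on one specific vector, and the ``$m$-independence'' of $\kappa$ comes out automatically because the offending terms $m(1+c)$ cancel in both scalar inequalities, leaving only constants on each side. The only minor point to note in the write-up is that both $2c$ and $(1-c)/(1+c)$ are positive for $c\in(0,1)$, so the minimum is a legitimate positive choice of $\kappa$.
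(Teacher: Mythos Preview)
Your proof is correct and essentially identical to the paper's own argument: both compute $L(m)(1,1,1+c,1+c)^\top=(m+2c,\,m+2c,\,2+(m-1)(1+c),\,2+(m-1)(1+c))^\top$, reduce the elementwise inequality to the two scalar conditions $\kappa\le 2c$ and $\kappa\le (1-c)/(1+c)$, and then set $\kappa=\min\{2c,(1-c)/(1+c)\}>0$.
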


\begin{proof}
The assertion holds true for $\kappa$ if 
\[
\begin{pmatrix}m+2c\\m+2c\\(1+c)m+(1-c)\\(1+c)m+(1-c)
\end{pmatrix}\ge m(1+\kappa/m)\begin{pmatrix}1 \\ 1 \\ 1+c \\ 1+c 
\end{pmatrix},
\]
elementwise,
i.e., if $m+2 c\ge m(1+\kappa/m)$ and $(1+c)m+(1-c)\ge m(1+\kappa/m)(1+c)$.
This is the case if $\kappa \le \min(2 c,\frac{1-c}{1+c})$. 
Thus, setting $\kappa = \min(2 c,\frac{1-c}{1+c})>0$ gives the result. 
\end{proof}

Since $\overline M$  has positive entries 
 we have, elementwise,
\begin{equation}
\label{eq:lower}
\frac{1}{m}\overline M \lambda
\ge \frac{\min_i \lambda_i}{1+c} \frac{1}{m}\overline M (1,1,1+c,1+c)^\top\,,
\end{equation}
for all $\lambda=(\lambda_1,\lambda_2,\lambda_3,\lambda_4)^\top$ with
positive real entries and all $0<c<1$.

We can choose $c$ to maximise $\min(2 c,\frac{1-c}{1+c})$. The maximiser
satisfies $2c= \frac{1-c}{1+c}$, such that $c=(\sqrt{17}-3)/4\approx 0.14$, 
and thus
$\kappa=\sup_{c\in(0,1)}\min(2 c,\frac{1-c}{1+c})=(\sqrt{17}-3)/2\approx 0.28$.
\\

Together with  \eqref{eq:lower}, we therefore get, for these values of $c$ and $\kappa$, 
\[
\frac{1}{m} \overline M\lambda\ge \frac{\min_i \lambda_i}{1+c} (1+\kappa/m)(1,1,1+c,1+c)^\top, \text{ elementwise}.
\]

\begin{lemma}
\label{lemma:infinite_paths}
Let  $A_1,A_2,\dots$ be a sequence of patterns 
with corresponding path matrices $M_1,M_2,\dots$ and 
widths $m_1,m_2,\dots$, respectively. 

If $\sum_{k\ge 1} \frac{1}{m_k}=\infty$ then we
have, elementwise:
\[
\sup _n\prod_{k=1}^n \left(\frac{1}{m_k}\overline M_k\right) 
\begin{pmatrix}1\\1\\1\\1
\end{pmatrix}=\begin{pmatrix}\infty \\ \infty \\ \infty \\ \infty 
\end{pmatrix}.
\]
\end{lemma}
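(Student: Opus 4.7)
My plan is to stitch together Proposition \ref{prop:basic_ineq_virtual_matrix} and Lemma \ref{lemma:exists_c} into one per-step componentwise estimate and then iterate it, exploiting that the matrices $(1/m_k)\overline{M}_k$ are non-negative and hence monotone with respect to componentwise inequalities. Concretely, I would fix any $c\in(0,1)$, obtain the corresponding $\kappa>0$ from Lemma \ref{lemma:exists_c}, and set $v:=(1,1,1+c,1+c)^\top$. Proposition \ref{prop:basic_ineq_virtual_matrix} combined with Lemma \ref{lemma:exists_c} then gives, for every $k$ (noting $m_k\ge 4$, since horizontally and vertically blocked labyrinth patterns exist only for such widths), the componentwise inequality
$$\frac{1}{m_k}\,\overline{M}_k\, v \;\ge\; \bigl(1+\kappa/m_k\bigr)\,v.$$

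The next step is to iterate. Since each $\overline{M}_k$ has non-negative entries, componentwise inequalities are preserved under left-multiplication by $\overline{M}_k$, so absorbing one factor at a time (starting from $v$ on the right) yields
$$\Bigl(\prod_{k=1}^n \tfrac{1}{m_k}\overline{M}_k\Bigr) v \;\ge\; \prod_{k=1}^n\bigl(1+\kappa/m_k\bigr)\,v \quad\text{componentwise.}$$
To bridge from $v$ to $(1,1,1,1)^\top$ I would use the trivial observation $(1,1,1,1)^\top\ge \tfrac{1}{1+c}v$ componentwise; non-negativity of the partial product then gives
$$\Bigl(\prod_{k=1}^n \tfrac{1}{m_k}\overline{M}_k\Bigr)(1,1,1,1)^\top \;\ge\; \frac{1}{1+c}\prod_{k=1}^n\bigl(1+\kappa/m_k\bigr)\,v.$$

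Since every entry of $v$ is at least $1$, each entry on the left is bounded below by $\tfrac{1}{1+c}\prod_{k=1}^n(1+\kappa/m_k)$. The hypothesis $\sum 1/m_k=\infty$ together with $\kappa>0$ yields $\sum \kappa/m_k=\infty$, and the standard criterion $\prod(1+a_k)=\infty\Leftrightarrow\sum a_k=\infty$ (for non-negative $a_k$) then forces divergence of that product, which is precisely the claim. I do not anticipate a serious obstacle here: the substantive analytic work has already been done in the two cited preparatory results, and what remains is essentially a bookkeeping argument. The only point requiring care is verifying that the chain of componentwise inequalities is preserved when multiplied by the non-negative matrices $\overline{M}_k$, which is immediate, and noting that the factor $1/(1+c)$ lost in passing from $v$ to $(1,1,1,1)^\top$ is harmless because it is a fixed constant while the product $\prod(1+\kappa/m_k)$ tends to infinity.
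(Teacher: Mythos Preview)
Your proof is correct and follows essentially the same route as the paper: combine Proposition \ref{prop:basic_ineq_virtual_matrix} with Lemma \ref{lemma:exists_c} to obtain the per-step bound $\frac{1}{m_k}\overline M_k v \ge (1+\kappa/m_k)v$ for $v=(1,1,1+c,1+c)^\top$, iterate using non-negativity, and then convert to $(1,1,1,1)^\top$ at the cost of a harmless factor $\frac{1}{1+c}$. The only cosmetic difference is that the paper handles the passage from $(1,1,1,1)^\top$ to $v$ via the rightmost matrix factor (using inequality \eqref{eq:lower}), whereas you do it directly with the trivial inequality $(1,1,1,1)^\top\ge \frac{1}{1+c}v$; your version is in fact slightly cleaner.
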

\noindent
\begin{proof}
Let again $c=(\sqrt{17}-3)/4$ and $\kappa=2c$, and let $K$ be an integer, $K>1$.
Then, for every vector $\lambda= (\lambda_1,\lambda_2,\lambda_3,\lambda_4)^\top$ with positive entries,
\[\frac{1}{m_K}\overline M_K\lambda\ge \frac{\min_i \lambda_i}{1+c}(1+\kappa/m_K)(1,1,1+c,1+c)^\top\,.\]
Now,  for every $1\le k\le K-1$ we have, by Proposition \ref{prop:basic_ineq_virtual_matrix} and Lemma \ref{lemma:exists_c}, elementwise,
\[
\left(\frac{1}{m_k}\overline M_k\right)
\begin{pmatrix}1\\1\\1+c\\1+c
\end{pmatrix}
\ge \left(\frac{1}{m_k}L(m_k)\right)
\begin{pmatrix}1\\1\\1+c\\1+c
\end{pmatrix}
\ge \Big(1+\frac{\kappa}{m_k}\Big)
\begin{pmatrix}1\\1\\1+c\\1+c
\end{pmatrix},
\]
and thus we obtain, for all vectors $\lambda$ with positive entries,
\begin{align*}
\prod_{k=1}^K \left(\frac{1}{m_k}\overline M_k\right)\lambda
&\ge \frac{\min_i\lambda_i}{1+c}\prod_{k=1}^K\Big(1+\frac{\kappa}{m_k}\Big)
\begin{pmatrix}1\\1\\1+c\\1+c
\end{pmatrix}, \text{ elementwise}.
\end{align*}

Using the above inequality with $\lambda=(1,1,1,1)^\top$, we have  $\min_i\lambda_i=1$ and therefore (elementwise) for any $K\ge 1$
\[
\sup _n\prod_{k=1}^n \left(\frac{1}{m_k}\overline M_k\right) 
\begin{pmatrix}1\\1\\1\\1
\end{pmatrix}
\ge 
\frac{1}{1+c}\prod_{k=1}^K\Big(1+\frac{\kappa}{m_k}\Big)
\begin{pmatrix}1\\1\\1+c\\1+c
\end{pmatrix}.
\]
From the lemma's hypothesis $\displaystyle \sum_{k \ge 1 } \frac{1}{m_k} =\infty$ and thus, by known facts from calculus, $\sup_K \prod_{k=1}^K(1+\frac{\kappa}{m_k}) =\infty$, which, together with taking the limit $K\to \infty$, yields, elementwise, 
\[
\sup _n\prod_{k=1}^n \left(\frac{1}{m_k}\overline M_k\right) 
\begin{pmatrix}1\\1\\1\\1
\end{pmatrix}=\begin{pmatrix}\infty \\ \infty \\ \infty \\ \infty 
\end{pmatrix}.
\]
\end{proof}
\begin{proof}[Proof of Theorem \ref{theorem:solve_conj_mixlaby}.] 
By Lemma \ref{lemma:infinite_paths} and Proposition \ref{lemma:length_of_parametrisation} it follows that in $L_{\infty}$ we have 
\begin{align*}
&\ell(a(t_{\infty},b_{\infty}))=\ell(a(l_{\infty},r_{\infty}))\\
=&
\ell(a(t_{\infty},r_{\infty}))+\ell(a(l_{\infty},b_{\infty}))\\
=&
\ell(a(l_{\infty},t_{\infty}))+\ell(a(b_{\infty},r_{\infty}))=\infty
\end{align*}

The next step is to show that, under the theorem's assumptions, all arcs between exits of the labyrinth fractal have infinite length, i.e., 
\[
\ell(a(t_{\infty},r_{\infty}))=\ell(a(l_{\infty},b_{\infty}))=
\ell(a(l_{\infty},t_{\infty}))=\ell(a(b_{\infty},r_{\infty}))=\infty. 
\]

Since $\ell(a(t_{\infty},r_{\infty}))+\ell(a(l_{\infty},b_{\infty}))= \infty$, at least one of the arc lengths in the sum in infinite, therefore let us  assume w.l.o.g that  $\ell(a(t_{\infty},r_{\infty}))=\infty$.  Now, let us consider the arc $a(l_{\infty},b_{\infty})$. Due to the corner property of labyrinth patterns, this arc cannot be totally contained in only one square of level $1$, which in this case would be a square 
$W \in {\cal V}({\mathcal G}({\mathcal W}_1))$ of type $\E$ in the path in ${\mathcal G}({\mathcal W}_1)$ from the left to the bottom exit of ${\mathcal W}_1$. It follows that the path from the left exit to the bottom exit of ${\mathcal W}_1 $, contains, in addition to a square of type $\E$, at least one square of type $\A$ or $\B$. Let us assume, w.l.o.g., it contains a square of type $\A$.

 Now, let us consider $L_{\infty}'$, the mixed labyrinth fractal defined by the sequence of patterns ${\mathcal A}_1',{\mathcal A}_2',\dots $, where ${\mathcal A}_k'={\mathcal A}_{k+1}$, for all  $k\ge 1$, with width sequence $\{m_k'\}_{k\ge 1}$. We get $\sum_{k\ge 1}\frac{1}{m_k'}=\infty$ and $L_{\infty}'$ satisfies the assumtions of the theorem. On the other hand, $L_{\infty}$ is a finite union of copies (scaled by factor $1/m_1$) of $L_{\infty}'$. Moreover, the arc $a(l_{\infty}, b_{\infty})$ in $L_{\infty}$ is a union of arcs between exits in some of the mentioned copies of $L_{\infty}'$. Since one of the squares in the path of type $\E$ in ${\mathcal G}({\mathcal W}_1)$ is of type $\A$, it follows by \cite[Proposition 3]{mixlaby} or by Proposition \ref{lemma:ArcSimilarity1} applied to mixed labyrinth fractals, that the arc $a(l_{\infty}, b_{\infty})$ in $L_{\infty}$ has as subarc a copy (scaled by factor $1/m_1$) of the arc between the left and bottom exits in  $L_{\infty}'$, which has infinite length. Therefore, it follows that 
$\ell(a(l_{\infty}, b_{\infty}))=\infty$.
Analogously one can prove that $\ell(a(l_{\infty},t_{\infty}))=\ell(a(b_{\infty},r_{\infty}))=\infty$.

Finally, by arguments analogous to those used in the self-similar case \cite[Theorem 3.18]{laby_oigemoan}, it follows that the arc between any two distinct points in the fractal $L_{\infty}$ has infinite length.
\end{proof}

\begin{remark}
The condition $\sum_{k\ge 1}\frac{1}{m_k}=\infty$ in Theorem \ref{theorem:solve_conj_mixlaby} is not a necessary condition for a sequence of blocked labyrinth patterns to generate a fractal $L_{\infty}$ with the property that the length of any arc that connects distinct points in $L_{\infty}$ is infinite. This is shown, e.g., by the following example. Let $L_{\infty}$ be a self-similar labyrinth fractal generated by a horizontally and vertically blocked $m\times m$-pattern ${\mathcal A}$ . The fractal can also be viewed as being a mixed labyrinth fractal generated by the sequence of patterns $\{{\mathcal A}_k\}_{k\ge 1}$, with ${\mathcal A}_1={\mathcal A}$, and ${\mathcal A}_k={\mathcal W}_{k}({\mathcal A})$, for $k\ge 2$, where ${\mathcal W}_{k}({\mathcal A})$ denotes the labyrinth set of level $k$ obtained in the construction of the self-similar fractal $L_{\infty}$. Thus, $m_k=m^k$, for all $k\ge 1$ and thus $\sum_{k\ge 1} \frac{1}{m_k}<\infty.$
On the other hand, Theorem \ref{theo:main result_selfsimilar} applied for the self-similar labyrinth fractal $L_{\infty}$ yields that for every two distinct points   in the fractal the length of the arc that connects them has infinite length.
\end{remark}

\begin{remark}
For  supermixed  labyrinth sets we have no result
corresponding to Corollary \ref{corollary:reduced-product}.
That is, we have no ``simple'' formula for the reduced path matrix of 
supermixed labyrinth sets. This is one of the reasons why 
the method used in the proof of Theorem \ref{theorem:solve_conj_mixlaby}
does not work in the supermixed case.
\end{remark}

\begin{remark}
Note that from Lemma \ref{lemma:ems35} it follows trivially that 
$m_{\C,\A}+ m_{\E,\A}\ge 1$. We discuss a geometric consequence of this:

Let $M$ be the path matrix of the pattern $\mathcal A_2$ and let 
$M_1$ be the path matrix of the pattern $\mathcal A_1$.
Let us consider a path between exits in $\mathcal W_2$
and also the corresponding path in $\mathcal W_1$ (so that the path
in $\mathcal W_2$ lies entirely in the one in $\mathcal W_1$).

For every pair of squares  in this  path in  $\mathcal W_1$
such that one is of type 
$\C$ and the other is of type $\E$,  we know that 
there exist at least one square of type $\A$ and one square of type $\B$ in 
the path in $\mathcal W_2$.
This consideration plays an essential role in the proof of
Proposition \ref{prop:basic_ineq_virtual_matrix}, which in turn is used
in the proof of Theorem \ref{theorem:solve_conj_mixlaby}.

However, this need not hold in the supermixed case. In the following example,
the path segment that connects the top exit of the left pattern to the bottom
exit of the right pattern, substituted in adjacent squares of $\mathcal W_1$
(see figure
\ref{fig:concatenation_supermix_counterexample}) contains neither a square of type $\A$ nor a square of type $\B$.

\begin{figure}[h!]
\begin{center}
\begin{tikzpicture}[scale=.30]

\draw[line width=2.2pt] (0,0) rectangle (28,14);

\filldraw[fill=gray!50] (6,12) rectangle (8,14);
\filldraw[fill=gray!50] (8,10) rectangle (10,14);
\filldraw[fill=gray!50] (10,8) rectangle (12,12);
\filldraw[fill=gray!50] (12,6) rectangle (14,10);
\filldraw[fill=gray!50] (14,4) rectangle (16,8);
\filldraw[fill=gray!50] (16,2) rectangle (18,6);
\filldraw[fill=gray!50] (18,0) rectangle (20,4);
\filldraw[fill=gray!50] (20,0) rectangle (22,2);

\draw[line width=1.2pt] (2, 0) -- (2,14);
\draw[line width=1.2pt] (4, 0) -- (4,14);
\draw[line width=1.2pt] (6, 0) -- (6,14);
\draw[line width=1.2pt] (8, 0) -- (8,14);
\draw[line width=1.2pt] (10, 0) -- (10,14);
\draw[line width=1.2pt] (12, 0) -- (12,14);
\draw[line width=1.2pt] (14, 0) -- (14,14);
\draw[line width=1.2pt] (16, 0) -- (16,14);
\draw[line width=1.2pt] (18, 0) -- (18,14);
\draw[line width=1.2pt] (20, 0) -- (20,14);
\draw[line width=1.2pt] (22, 0) -- (22,14);
\draw[line width=1.2pt] (24, 0) -- (24,14);
\draw[line width=1.2pt] (26, 0) -- (26,14);

\draw[line width=1.2pt] (0, 2) -- (28,2);
\draw[line width=1.2pt] (0, 4) -- (28,4);
\draw[line width=1.2pt] (0, 6) -- (28,6);
\draw[line width=1.2pt] (0, 8) -- (28,8);
\draw[line width=1.2pt] (0, 10) -- (28,10);
\draw[line width=1.2pt] (0, 12) -- (28,12);

\filldraw[fill=black, draw=black] (0,0) rectangle (2, 6);
\filldraw[fill=black, draw=black] (0,8) rectangle (2,14);
\filldraw[fill=black, draw=black] (2,0) rectangle (4,4);
\filldraw[fill=black, draw=black] (2,12) rectangle (4,14);
\filldraw[fill=black, draw=black] (4,0) rectangle (6,2);
\filldraw[fill=black, draw=black] (4,6) rectangle (6,10);
\filldraw[fill=black, draw=black] (6,4) rectangle (8,12);
\filldraw[fill=black, draw=black] (8,0) rectangle (10,10);
\filldraw[fill=black, draw=black] (10,0) rectangle (12, 8);
\filldraw[fill=black, draw=black] (10,12) rectangle (12,14);
\filldraw[fill=black, draw=black] (12,0) rectangle (14,6 );
\filldraw[fill=black, draw=black] (12,10) rectangle (14, 14);

\filldraw[fill=black, draw=black] (14,0) rectangle (16,4);
\filldraw[fill=black, draw=black] (14,8) rectangle (16,14 );
\filldraw[fill=black, draw=black] (16,0) rectangle (18, 2);
\filldraw[fill=black, draw=black] (16,6) rectangle (18,14);
\filldraw[fill=black, draw=black] (18,4) rectangle (20,14);
\filldraw[fill=black, draw=black] (20,2) rectangle (22,10);
\filldraw[fill=black, draw=black] (22,4) rectangle (24,8);
\filldraw[fill=black, draw=black] (22,12) rectangle (24,14 );
\filldraw[fill=black, draw=black] (24,0) rectangle (26,2 );
\filldraw[fill=black, draw=black] (24,10) rectangle (26,14 );
\filldraw[fill=black, draw=black] (26,0) rectangle (28, 6);
\filldraw[fill=black, draw=black] (26,8) rectangle (28,14 );

\draw[line width=1pt, color=gray!20] (14,0) -- (14,14);
\draw[line width=1pt, color=gray!90] (14,6) -- (14,8);

\draw[line width=2.8pt] (7,14)--(7,12.85);
\draw[line width=2.8pt] (7,13)--(9.15,13);
\draw[line width=2.8pt] (9,13)--(9,11);
\draw[line width=2.8pt] (8.85,11)--(11.15,11);
\draw[line width=2.8pt] (11,11)--(11,9);
\draw[line width=2.8pt] (10.85,9)--(13.15,9);
\draw[line width=2.8pt] (13,9)--(13,7);
\draw[line width=2.8pt] (12.85,7)--(15.15,7);
\draw[line width=2.8pt] (15,7)--(15,5);
\draw[line width=2.8pt] (14.85,5)--(17.15,5);
\draw[line width=2.8pt] (17,5)--(17,3);
\draw[line width=2.8pt] (16.85,3)--(19.15,3);
\draw[line width=2.8pt] (19,3)--(19,1);
\draw[line width=2.8pt] (18.85,1)--(21.15,1);
\draw[line width=2.8pt] (21,1)--(21,0);

\end{tikzpicture}
\caption{The concatenation of paths in two adjacent copies of distinct labyrinth patterns (labyrinth set of level 1): from top to right and from left to bottom}
\label{fig:concatenation_supermix_counterexample}
\end{center}
\end{figure}
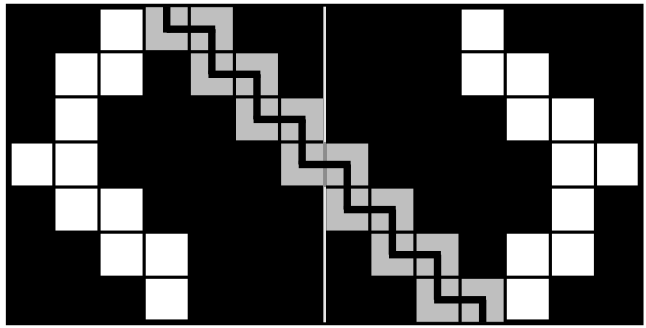
\end{remark}

\section{Conclusions}\label{sec:conclusions}

We have presented a new class of planar dendrites, termed supermixed 
labyrinth fractals, which generalises
the established concept of (mixed) labyrinth fractal.
We proved several useful results about these objects, in particular we were
able to find a recursive formula for their path matrix, 
Theorem \ref{theo:basisformel}, which makes them 
amenable to concrete computation of discrete path lengths and related 
quantities. 
In addition to generalising earlier results on the topology of (mixed) 
labyrinth fractals, 
some of our results provide new insight also into the more special cases
of self-similar and mixed labyrinth fractals. E.g., we establish important
relations between the length of arcs between exits of the fractal and 
special subarcs of the fractal.  

One of the main results is Theorem \ref{theorem:solve_conj_mixlaby}, were
we give a sufficient condition for the infinite length of any arc in a
mixed labyrinth fractal. Even though the method of proof is not strong enough
to provide the same result for the supermixed case, we believe that it also
constitutes a valuable step towards the more general result, which 
therefore still remains open.


\begin{thebibliography}{20}
\bibitem{AnhHoffmanSeegerTarafdar2005} D.\,H.\,N. Anh, K.\,H. Hoffmann, S. Seeger, S. Tarafdar, \emph{Diffusion in disordered fractals}, Europhys. Lett. {\bf 70} (1) (2005), 109--115
\bibitem{barnsley_superfractals} M.\,F. Barnsley, \emph{Superfractals},  Cambridge University Press, New York, 2006 
\bibitem{laby_4x4} L.\,L. Cristea, B. Steinsky, \emph{Curves of Infinite Length in $4\times 4$-Labyrinth Fractals}, Geometriae Dedicata, Vol. 141, Issue 1 (2009), 1--17
\bibitem{laby_oigemoan} L.\,L. Cristea, B. Steinsky, \emph{Curves of Infinite Length in Labyrinth-Fractals}, Proceedings of the Edinburgh Mathematical Society Volume 54, Issue 02 (2011), 329--344
\bibitem{mixlaby}L.\,L. Cristea, B. Steinsky, \emph{Mixed labyrinth fractals}, 
Topol. Appl.(2017), http://dx.doi.org/10.1016/jtopol2017.06.022
\bibitem{cristealeobacher_arcs}  L.\,L. Cristea, G. Leobacher, \emph{On the length of arcs in labyrinth fractals}, Monatshefte f\"ur Mathematik (2017), doi:10.1007/s00605-017-1056-8
\bibitem{Falconer_book}  K.\,J. Falconer, \emph{Fractal geometry, Mathematical Foundations and Applications}, John Wiley \& Sons, Chichester, 1990
%
\bibitem{freiberghamblyhutchinson_Vvariable} U. Freiberg, B.\,M. Hambly, J.\,E. Hutchinson, \emph{Spectral Asymptotics for $V$-variable Sierpinski Gaskets}, arXiv:1502.00711 [math.PR]
\bibitem{GrachevPotapovGerman2013} V.\,I. Grachev, A.\,A. Potapov, V.\,A. German, \emph{Fractal Labyrinths and Planar Nanostructures}, 
PIERS Proceedings, Stockholm, Sweden, Aug. 12-15, 2013 
\bibitem{tarafdar_multifractalNaCl2013} A. Giri, M. Dutta Choudhury, T. Dutta, S. Tarafdar, \emph{Multifractal Growth of Crystalline NaCl Aggregates in a Gelatin Medium},  Crystal Growth \& Design {\bf 13} (2013), 341--345
\bibitem{JanaGarcia_lithiumdendrite2017} A. Jana, R.\,E. Garcia, \emph{Lithium dendrite growth mechanisms in liquid electrolytes}, Nano Energy {\bf 41} (2017), 552-565
 \bibitem{Kuratowski} K. Kuratowski, Topology, Volume II, Academic Press, New York and London, 1968
\bibitem{MauldinWilliams_graphdirected_1988} R.\,D. Mauldin, S.\,C. Williams, \emph{Hausdorff dimension in graph directed constructions}, Trans. Amer. Math. Soc. {\bf 309} (2) (1988), 811--829
\bibitem{MauldinUrbanski_bookGDMS_2003}R.\,D. Mauldin, M.\,Urba\'nski, \emph{Graph Directed Markov Systems: geometry and dynamics of limit sets}, Cambridge Tracts in Mathematics, Cambridge University Press, 2003
\bibitem{Maehara84} R. Maehara, \emph{The Jordan Curve Theorem Via the Brouwer Fixed Point Theorem}, The American Mathematical Monthly, {\bf 91} (10)(1984), 641--643
%
%
%
%
%
\bibitem{PotapovGermanGrachev2013}  A.\,A. Potapov, V.\,A. German, V.\,I. Grachev, \emph{``Nano'' and radar signal processing: Fractal reconstruction complicated images, signals and radar backgrounds based on fractal labyrinths}, Conf. Radar Symposium (IRS) Proceedings, 2013, Vol. 2
\bibitem{PotapovZhang2016} A.\,A. Potapov, W. Zhang, \emph{Simulation of New Ultra-Wide Band Fractal Antennas Based on Fractal Labyrinths}, Proceedings of the 2016 CIE International Conference on Radar, Guangzhou, Oct 10-12, 2016, 319--323
\bibitem{PotapovPotapovPotapov_dec2017} A.\,A. Potapov, Al.\,A. Potapov, V.\,A. Potapov, \emph{Fractal radioelements, devices and fractal systems for radar and telecomunications}, Conference Paper, December 2017 (www.researchgate.net/publication/321462598)
\bibitem{Samuel_selfsimilar_dendrites} M. Samuel, A. Tetenov, D. Vaulin, \emph{Self-similar dendrites generated by polygonal systems in the plane}, Siberian Electronic Mathematical Reports, DOI 10.17377/semi.2017.14.063
\bibitem{SeegerHoffmannEssex2009_randomKoch} S. Seeger, K.\,H. Hoffmann, C. Essex, \emph{Random Walks on random Koch curves}, J. Phys. A: Math. Theor. {\bf 42} (2009) 225002 (12 pages) 
\bibitem{Tarafdar_modelporstructrepeatedSC2001} S. Tarafdar, A. Franz, C. Schulzsky, K.\,H. Hoffmann, \emph{Modelling porous structures by repeated Sierpi\'nski carpets}, Physica A {\bf 292} (2001), 1--8
\bibitem{Tricot} C. Tricot, Curves and Fractal Dimension, Springer-Verlag, Paris, 1993
%
%
%
\end{thebibliography}
\end{document}